\numberwithin{equation}{section}
\theoremstyle{plain}
\newtheorem{teorema}{Theorem}[section]
\theoremstyle{definition}
\newtheorem{definizione}[teorema]{Definition}
\theoremstyle{definition}
\theoremstyle{definition}
\theoremstyle{definition}
\theoremstyle{plain}
\newtheorem{corollario}[teorema]{Corollary}
\theoremstyle{plain}
\newtheorem{proposizione}[teorema]{Proposition}
\theoremstyle{plain}
\newtheorem{lemma}[teorema]{Lemma}
\theoremstyle{plain}
\theoremstyle{plain}
\newtheorem{remark}[teorema]{Remark}
\theoremstyle{plain}
\def\dfrac#1#2{\lower0.15ex\hbox{\large$\frac{#1}{#2}$}}
\begin{document}

\pagestyle{plain}
\pagenumbering{arabic}

\title{Transition time asymptotics of queue-based\\ 
activation protocols in random-access networks \tnoteref{t1,t2}}
\tnotetext[t1]{Declarations of interest: none.}
\tnotetext[t2]{\emph{Funding:} This work was supported by the Netherlands Organisation for Scientific Research (NWO) [Gravitation Grant number 024.002.003--NETWORKS].}

\author[TUe,nokia]{S.C.~Borst}
\ead{s.c.borst@tue.nl}
\author[LU]{F.~den Hollander}
\ead{denholla@math.leidenuniv.nl}
\author[Firenze]{F.R.~Nardi}
\ead[url]{francescaromana.nardi@unifi.it}
\author[LU]{M.~Sfragara\corref{corresponding}}
\ead[url]{m.sfragara@math.leidenuniv.nl}

\cortext[corresponding]{Corresponding author}

\address[TUe]{Eindhoven University of Technology, Department of Mathematics and Computer Science, 5600 MB, Eindhoven, The Netherlands}
\address[nokia]{Nokia Bell Labs, 600 Mountain Avenue, Murray Hill, NJ, USA} 
\address[LU]{Leiden University, Mathematical Institute, 2300 RA, Leiden, The Netherlands} 
\address[Firenze]{University of Florence, Department of Mathematics and Computer Science ``Ulisse Dini", Viale Morgagni 67/a 50134, Florence, Italy}

\begin{abstract}

We consider networks where each node represents a server with a queue. An 
active node deactivates at unit rate. An inactive node activates at a rate that 
depends on its queue length, provided none of its neighbors is active.

For complete bipartite networks, in the limit as the queues become large, we 
compute the average transition time between the two states where one 
half of the network is active and the other half is inactive. We show that the
law of the transition time divided by its mean exhibits a trichotomy, 
depending on the activation rate functions.

\end{abstract}

\begin{keyword}

Random-access networks \sep activation protocols \sep
transition time \sep metastability

\emph{MSC2010:} 
60K25, 
60K30, 
90B15, 
90B18. 
\end{keyword}
\maketitle

\section{Introduction}
\label{S1}

Section~\ref{S1.1} provides motivation and background. Section~\ref{S1.2} formulates 
the mathematical model. Section~\ref{S1.3} states the main theorems. Section~\ref{S1.4} 
offers a brief discussion of these theorems, as well as an outline of the remainder of 
the paper.

 
\subsection{Motivation and background}
\label{S1.1}

In the present paper we investigate metastability properties and transition time 
asymptotics of \emph{queue-based random-access protocols in wireless networks}.
Specifically, we consider a stylized stochastic model for a wireless network (see 
Fig.~\ref{fig:network} below), represented in terms of an undirected graph $G = (N,B)$, 
referred to as the \emph{interference graph}. The set of nodes $N$ labels the servers and 
the set of bonds $B$ indicates which pairs of servers interfere and are therefore 
prevented from simultaneous activity. We denote by $X(t) \in \mathcal{X} $ the joint 
activity state at time $t$, with state space 
\begin{equation}
\label{Xdef}
\mathcal{X} = \big\{x \in \{0,1\}^N\colon\, x_ix_j = 0\,\,\,\forall\, (i,j) \in B\big\},
\end{equation}
where $x_i = 0$ means that node $i$ is inactive and $x_i =1$ that it is active. 

\begin{figure}[htbp]
\begin{center}
\vspace{0.5cm}
\includegraphics[width=.40\linewidth]{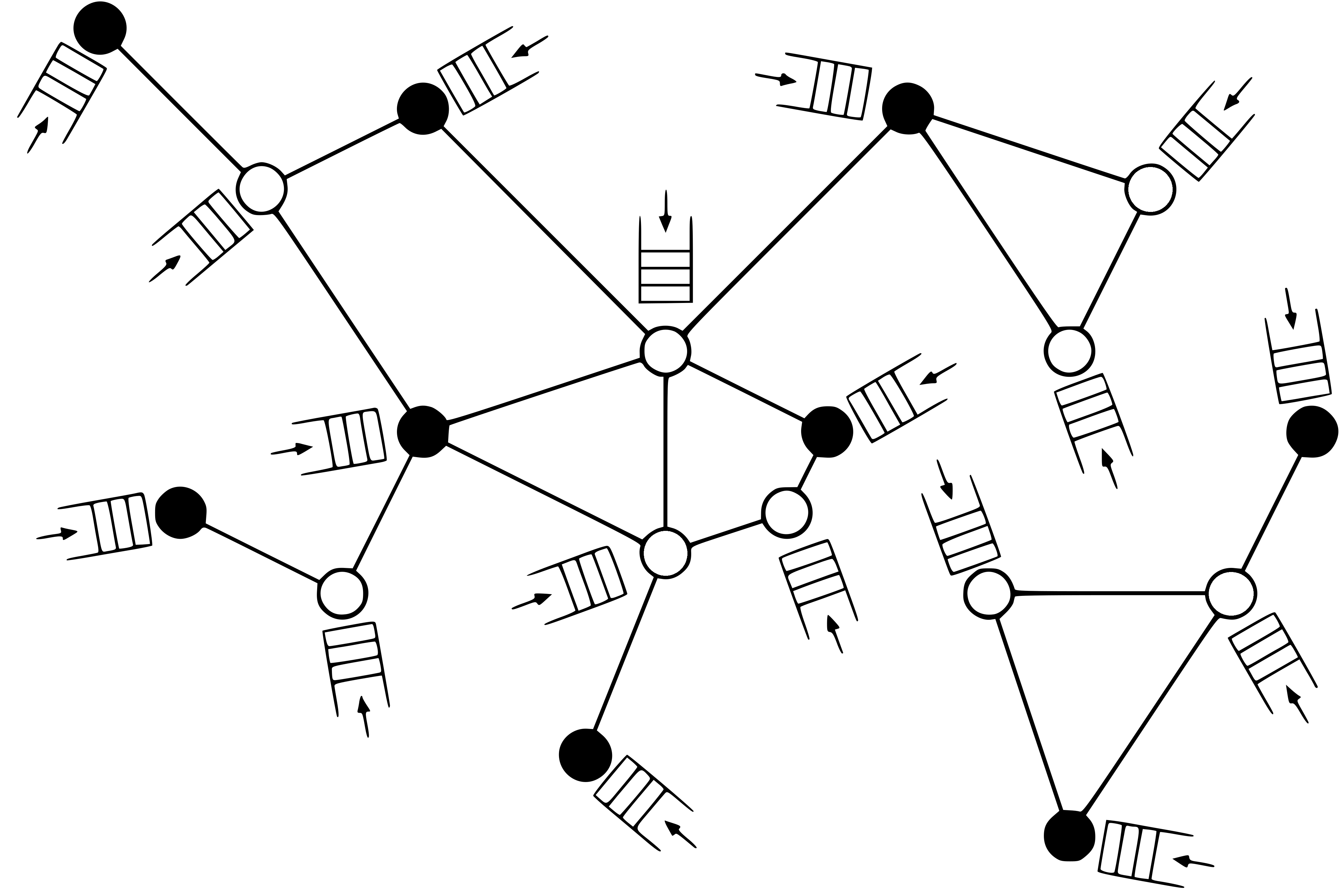}
\vspace{0.5cm}
\caption{\small A random-access network. Each node represents a server with a queue. 
Packets arrive that require a random service time.} 
\label{fig:network}
\end{center}
\vspace{-0.5cm}
\end{figure}

We assume that packets arrive at the nodes as independent Poisson processes 
and have independent exponentially distributed sizes. When a packet arrives at 
a node, it joins the queue at that node and the queue length undergoes an instantaneous 
jump equal to the size of the arriving packet. The queue decreases at a constant rate
$c$ (as long as it is positive) when the node is active. We denote by $Q(t) \in 
\mathbb{R}_+^N$ the joint queue size vector at time $t$, with $Q_i(t)$ representing 
the queue size at node $i$ at time $t$. When node $i$ is inactive at time $t$, it activates 
at a time-dependent exponential rate $r_i(Q_i(t))$, \emph{provided none of its neighbors 
is active}, where $q \mapsto r_i(q)$ is some increasing function. Activity durations are 
exponentially distributed with unit mean, i.e., when a node is active it deactivates at 
rate $1$. Note that $(X(t),Q(t))_{t \geq 0}$ evolves as a time-homogeneous 
Markov process with state space $\mathcal{X} \times \mathbb{R}_+^N$, since the transition 
rates depend on time only via the current state of the vector.

The above-described model has been thoroughly studied in the case
where the activation rate at each node~$i$ is fixed at some value~$r_i$,
$i = 1, \dots, N$.
In that case the joint activity process $(X(t))_{t \geq 0}$ behaves
as a reversible Markov process for any interference graph~$G$,
and has a product-form stationary distribution
\begin{equation}
\lim_{t \to \infty} \mathbb{P}\{X(t) = x\} =
Z_{\mathcal{X}}^{- 1}(r_1, \dots, r_N) \prod_{i = 1}^{N} r_i^{x_i}, \qquad
x \in \mathcal{X},
\label{prodstat1}
\end{equation}
with $Z_{\mathcal{X}}(r_1, \dots, r_N)$ denoting a normalization constant.
This basic model version was introduced in the eighties to analyze
the throughput performance of distributed resource sharing
and random-access schemes in packet radio networks, in particular the
so-called Carrier-Sense Multiple-Access (CSMA) protocol
\cite{BK80,BKMS87,Kelly85,KBC87,PY86,Yemini83}.
The model was rediscovered and further examined twenty years later
in the context of IEEE 802.11 (WiFi) networks \cite{DDT07,DT06,LKLW09,WK05}.

If we further restrict to $r_i \equiv r$ for all $i = 1, \dots, N$,
then the product-form distribution in~\eqref{prodstat1} simplifies to
\begin{equation}
\lim_{t \to \infty} \mathbb{P}\{X(t) = x\} = Z_{\mathcal{X}}^{- 1}(r)\,
r^{\sum_{i = 1}^{N} x_i}, \qquad x \in \mathcal{X},
\label{prodstat2}
\end{equation}
with $Z_{\mathcal{X}}(r) \equiv Z_{\mathcal{X}}(r, \dots, r)$.
From an interacting-particle systems perspective, the distribution
in~\eqref{prodstat2} may be recognized as the Gibbs measure
of a hard-core interaction model induced by the graph~$G$.
Hard-core interaction models are known to exhibit metastability effects,
where for certain graphs it takes an exceedingly long time for the
process to reach a stable state, starting from a metastable state
\cite{BdH05,NZB16,OV05}.
In particular, in a regime where the activation rate~$r$ grows large,
the stationary distribution of the joint activity process
in~\eqref{prodstat2} concentrates on states where the maximum number
of nodes is active, with possibly extremely slow transitions between them.

Metastability properties are not only of conceptual interest,
but also of great practical significance as the slow transitions
between activity states reflect spatial unfairness phenomena
and temporal starvation issues in random-access networks.
While the aggregate throughput may improve as the activation rate grows
large, individual nodes may experience prolonged periods of starvation,
possibly interspersed with long sequences of transmissions in rapid
succession, resulting in severe build-up of queues and long delays.
Indeed, the latter issues have been empirically observed in IEEE 802.11
networks, and have also been investigated through the lens of the
above-described model \cite{BBvL14,DT08,DTD09,GSK08,Zocca15}.
Gaining a deeper understanding of metastability properties and slow
transitions is thus instrumental in analyzing starvation behavior
in wireless networks, and ultimately of vital importance for designing
mechanisms to counter these effects and improve the overall
performance as experienced by users.

Note that slow transitions between activity states are not immediately
apparent from the stationary distribution in~\eqref{prodstat2}.
In fact, even when each node is active exactly the same fraction
of time in stationarity, it may well be the case that over finite time
intervals, certain nodes are basically barred from activity,
while other nodes are transmitting essentially all the time.
In other words, the stationary distribution is not directly
informative of the transition times between activity states that govern
the performance in terms of equitable transmission opportunities
for the various users during finite time windows.
Metastability properties that arise in the asymptotic regime where
the activation rate grows large, provide a powerful mathematical
paradigm to analyze the likelihood for such unfairness and starvation
issues to persist over long time periods.

The discussion above and the bulk of the literature pertain to the case
where the activation rates are fixed parameters.
As mentioned earlier, in the present paper we focus however
on \emph{queue-based} random-access protocols where the \emph{activation
rates are functions of the queue lengths at the various nodes}.
Specifically, the activation rate is an increasing function of the 
queue length of the node itself, and possibly a decreasing function
of the queue lengths of its neighbors, so as to provide greater
transmission opportunities to nodes with longer queues.
As a result, these rates vary over time as queues build up or drain
when packets are generated or transmitted.

Breakthrough work in \cite{GS10,JSSW10,RSS09,SS12} has shown that,
for suitable activation rate functions, queue-based random-access
schemes achieve maximum stability, i.e., provide stable queues
whenever feasible at all.
Thus these policies are capable of matching the optimal throughput
performance of centralized scheduling strategies, while requiring less
computation and operating in a distributed fashion.
On the downside, the very activation rate functions required
for ensuring maximum stability tend to result in long queues and poor
delay performance \cite{BBvL14,GBW14}.
This has sparked a strong interest in understanding, and possibly
improving, the delay performance of queue-based random-access schemes,
and analyzing metastability properties and transition times for the
joint activity process is a crucial endeavor in that regard.

When the activation rates are queue-dependent, the joint activity process
$(X(t))_{t \geq 0}$ may be viewed as a hard-core interaction model
with activation rates that depend on the additional queue state $Q(t)$.
The queue state not only depends on the history of the packet arrival
process (which causes upward jumps in the queue sizes), but also
on the past evolution of the activity process itself (through the
gradual reduction in queue sizes during activity periods).
The state-dependent nature of the activation rates raises interesting
and challenging issues from a methodological perspective,
and in particular requires novel concepts in order to handle
the two-way interaction between activity states and queue states.
We will specifcally examine the metastability properties
and transition times of the joint activity process in an asymptotic
regime where the initial queue sizes $Q_i(0)$, and hence the activation
rates $r_i(Q(t))$, $i = 1, \dots, N$, grow large in some suitable sense.

Throughout we focus on \emph{complete bipartite} interference graphs $G$: the 
node set can be partitioned into two nonempty sets $U$ and $V$ such that the bond set 
is the product of $U$ and $V$, i.e., two nodes interfere if and only if one belongs to $U$ 
and the other belongs to $V$. Thus, the collection of all independent sets of $G$ consists 
of all the subsets of $U$ and all the subsets of $V$.
While there is admittedly no specific physical reason
for focusing on complete bipartite interference graphs, this assumption
provides mathematical tractability and serves as a stepping stone
towards more general network topologies.
For convenience, we also assume that the activation rate functions are
of the form 
\begin{equation}
\label{rgdef}
r_i(t) = 
\left\{\begin{array}{ll} 
g_U(Q_i(t)), &i \in U, \\ 
g_V(Q_i(t)), &i \in V,
\end{array}
\right. 
\end{equation}
where $q \mapsto g_U(q)$ and $q \mapsto g_V(q)$ are non-decreasing functions such that $\lim_{q \to \infty} 
g_U(q) = \infty$, $ \lim_{q \to \infty} g_V(q)$ and $g_U(q) = g_V(q) = 0$ when $q < 0$. We denote by 
$u \in \mathcal{X}$ and $v \in \mathcal{X}$ the joint activity states where all the nodes in either $U$ or 
$V$ are active, respectively.

We will examine the distribution of the time until state $v$ is reached,
\begin{equation}
\label{Tvdef}
\tau_v = \inf\{t \geq 0\colon\, X(t) = v\},
\end{equation}
when the system starts from state $u$ at time $t=0$. We consider an asymptotic regime where 
the initial queue sizes $Q_i(0)$, $i \in U$, grow large in some suitable sense. As it turns out, 
the metastable behavior and asymptotic distribution of $\tau_v$ are closely related to those 
in a scenario where the \emph{activation rates are not governed by the random queue sizes}, 
but are \emph{deterministic} and of the form
\begin{equation}
\label{shdef}
r_i(t) = 
\left\{\begin{array}{ll} 
h_U(t), &i \in U, \\ 
h_V(t), &i \in V,
\end{array}
\right. 
\end{equation}
for suitable functions $t \mapsto h_U(t)$ and $t \mapsto h_V(t)$. 

Specifically, when the initial activity state is $u$ and the initial queue sizes $Q_i(0)$ are large 
for all $i \in U$, all the nodes in $U$ will initially be active virtually all the time, preventing any 
of the nodes in $V$ to become active. Consequently, the queue sizes of the nodes in $U$ will 
tend to decrease at rate $c - \rho_U  > 0$, while the queue sizes of the nodes in $V$ will tend 
to increase at rate $\rho_V > 0$, where $\rho_U$ and $\rho_V$ denote the common traffic intensity 
of the nodes in $U$ and  $V$, respectively. 

While the packet arrivals and activity periods are governed by random processes, the trajectories 
of the queue sizes will be \emph{roughly linear when viewed on the long time scales of interest}. 
This suggests that if we assume identical initial queue sizes $Q_i(0) \equiv Q_U(0)$, $i \in U$, 
and $Q_i(0) \equiv Q_V(0)$, $i \in V$, within the sets $U$ and $V$, respectively, then the asymptotic 
distribution of $\tau_v$ in \eqref{Tvdef} in the model with queue-dependent activation rates defined 
in \eqref{rgdef} should be close to that in the model with deterministic activation rates defined in 
\eqref{shdef} when we choose 
\begin{equation}
h_U(t) = g_U\big(Q_U(0) - (c- \rho_U ) t\big), \qquad h_V(t) = g_V\big(Q_V(0) + \rho_V  t\big).
\end{equation}
The asymptotic distribution of $\tau_v$ in the latter scenario was characterised in \cite{BdHNT17}, 
with the help of the metastability analysis for hard-core interaction models developed in \cite{dHNT17}.


\subsection{Mathematical model}
\label{S1.2}

We consider the case where $G=(N,B)$ is a \emph{complete bipartite} graph, i.e., 
$N = U \cup V$ and $B$ is the set of all bonds that connect a node in $U$ to a node in 
$V$ (see Fig.~\ref{fig:completebipartitegraph}).

\begin{figure}[htbp]
\begin{center}
\vspace{0.5cm}
\includegraphics[width=.18\linewidth]{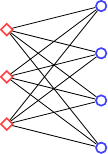}
\vspace{0.5cm}
\caption{\small A complete bipartite graph with $|U| = 3$ and $|V| = 4$. At time $t=0$, square-shaped 
nodes are active and circle-shaped nodes are inactive.} 
\label{fig:completebipartitegraph}
\end{center}
\end{figure}

\begin{definizione}[{\bf State of a node}]
A node in the network can be \textit{active} or \textit{inactive}. The \textit{state of node} $i$ 
at time $t$ is described by a Bernoulli random variable $X_i(t) \in \{0,1\}$, defined as
\begin{equation} 
X_i(t) = 
\begin{cases} 
0, \text{ if } i \text{ is inactive at time } t ,\\ 
1, \text{ if } i \text{ is active at time } t. 
\end{cases}
\end{equation}
The joint activity state $X(t)$ at time $t$ is an element of the set $\mathcal{X}$ defined in 
\eqref{Xdef}: the feasible configurations of the network correspond to the collection of 
independent sets of $G$. We denote by $u \in \mathcal{X} \,\, (v \in \mathcal{X})$
the configuration where all the nodes in $U$ are active (inactive) and all the nodes in $V$ 
are inactive (active).
\hfill\qed 
\end{definizione}

\begin{definizione}[{\bf Pre-transition time and transition time}]
The following two times are the main objects of interest in the present paper.
\begin{itemize}
\item
The \textit{pre-transition time} $\bar{\tau}_v$ is the first time a node in $V$ turns active, i.e.,
\begin{equation} \label{pretransition}
\bar{\tau}_v = \inf \{ t > 0\colon\, X_i(t) = 1\,\, \exists\, i \in V \}.
\end{equation}
\item
The \textit{transition time} $\tau_v$ is the first time configuration $v$ is hit, i.e.,
\begin{equation} \label{transition}
\tau_v = \min \big\{t \geq 0\colon\, X_i(t) = 0 \,\,\forall\, i \in U , \,X_i(t) = 1 \,\,\forall\, i \in V \big\}.
\end{equation}
\end{itemize}
\hfill\qed
\end{definizione}

\noindent
We are interested in the distribution of $\bar{\tau}_v$ and $\tau_v$ given that $X(0)=u$. The 
pre-transition time plays an important role in our analysis of the transition time, because the 
evolution of the network is simpler on the interval $[0,\bar{\tau}_v]$ than on the interval 
$[\bar{\tau}_v,\tau_v]$. However, we will see that $\tau_v-\bar{\tau}_v \ll \bar{\tau}_v$ when
the initial queue lengths are large, so that both times have the same asymptotic scaling behavior. See Fig.~\ref{pretrfig} for a representation of the pre-transition configuration.

\medskip\noindent
\begin{figure}[htbp]
\begin{minipage}{0.26\linewidth}
\includegraphics[keepaspectratio=true,width=0.7\textwidth]{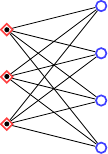}
\end{minipage}
\hfill
\begin{minipage}{0.26\linewidth}
\includegraphics[keepaspectratio=true,width=0.7\textwidth]{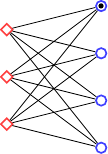}
\end{minipage}
\hfill
\begin{minipage}{0.26\linewidth}
\includegraphics[keepaspectratio=true,width=0.7\textwidth]{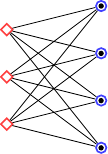}
\end{minipage}
\vspace{0.5cm}
\caption{\small \emph{Left}: initial configuration $u$. \emph{Center}: pre-transition configuration. 
\emph{Right}: final configuration $v$.}
\label{pretrfig}
\end{figure}

An active node $i$ turns inactive according to a deactivation Poisson clock: when the clock ticks, 
the node switches itself off. Vice versa, an inactive node $i$ attempts to become active at the 
ticks of an activation Poisson clock: an attempt at time $t$ is successful when no neighbors 
of $i$ are active at time $t^-$. Different models can be studied depending on the choice of the 
activation and deactivation rates of the clocks. Models where these rates are deterministic 
functions of $t$ are called \textit{external models}. In the present paper we are interested in 
what are called \textit{internal models}, where the clock rates at node $i$ depend on the queue 
length at node $i$ at time $t$.

\begin{definizione}[{\bf Queue length at a node}]
Let $t \mapsto Q_i^+(t)$ be the \textit{input process} describing packets arriving according to 
a Poisson process $t \mapsto N(t) = \mathrm{Poisson}(\lambda t)$ and having i.i.d.\ exponential 
service times $Y_j \simeq \mathrm{Exp}(\mu)$, $j \in \mathbb{N}$. Let $t \mapsto Q_i^-(t)$ be the
\textit{output process} representing the cumulative amount of work that is processed in the time 
interval $[0,t]$ at rate $c$, which equals $cT_i(t) = c \int_0^t X_i(s) ds$. Define
\begin{equation} 
\Delta_i(t) = Q_i^+(t) - Q_i^-(t) = \sum_{j=0}^{N_i(t)} Y_{ij} - c T_i(t)
\end{equation}
and let $s^* = s^*(t)$ be the value where $\sup_{s \in [0,t]} [\Delta_i(t) - \Delta_i(s)]$ is reached, i.e., 
equals $[\Delta_i(t) - \Delta_i(s^*-)]$. Let $Q_i(t) \in \mathbb{R}_{\geq 0}$ denote the queue length 
at node $i$ at time $t$. Then
\begin{equation}
Q_i(t) = \max\big\{ Q_i(0) + \Delta_i(t),\,\Delta_i(t)-\Delta_i(s^*-) \big\},
\end{equation}
where $Q_i(0)$ is the \textit{initial queue length}. The maximum is achieved by the first term when 
$Q_i(0) \geq -\Delta_i(s^*-)$ (the queue length never sojourns at $0$), and by the second term 
when $Q_i(0) < -\Delta_i(s^*-)$ (the queue length sojourns at $0$ at time $s^*-$).
\hfill\qed
\end{definizione}

\noindent
In order to ensure that the queue length remains non-negative, we let a node switch itself off 
when its queue length hits zero. The initial queue lengths are assumed to be
\begin{equation}
\label{initialqueues}
Q_i(0) = 
\left\{\begin{array}{ll} 
\gamma_U r, &i \in U, \\  
\gamma_V r, &i \in V, 
\end{array}
\right.
\end{equation}
where $\gamma_U \geq \gamma_V > 0$, and $r$ is a parameter that tends to infinity. Thus, the initial 
queue lengths are of order $r$, i.e., $Q_i(0) \asymp r$, and the ones at the nodes in $U$ are larger than 
the ones at the nodes in $V$. Note that both the pre-transition and the transition time grow to infinity 
with $r$, since the larger the initial queue lengths are, the longer it takes for the transition to occur.

For each node $i$, the \emph{input process} $t \mapsto Q_i^+(t) = \sum_{j=0}^{N_i(t)} Y_{ij}$ is a 
compound Poisson process. In the time interval $[0,t]$ packets arrive according to a Poisson process 
$t \mapsto N_i(t)$ with a rate $\lambda_U$ or $\lambda_V$, depending on whether the node is in 
$U$ or $V$. Moreover, each packet $j$ brings the information of its service time: the service time 
$Y_{ij}$ of the $j$-th packet at node $i$ is exponentially distributed with parameter $\mu$. Hence 
the expected value of $Q_i^+(t)$ for a node in $U$ is the product of the expected value $\mathbb{E}
[N_i(t)] = \lambda_U t$ and the expected value $\mathbb{E}[Y_j] = 1/\mu$, i.e., $\mathbb{E}[Q_i^+(t)] 
= (\lambda_U / \mu)t = \rho_U t$. Analogously, for a node in $V$ we have $\mathbb{E}[Q_i^+(t)] 
= \rho_V t$. We assume that all the service times are i.i.d.\ random variables, and are independent of 
the Poisson process $t \mapsto N_i(t)$.

For each node $i$, the \emph{output process} is $t \mapsto Q_i^-(t) = c T_i(t) = c \int_0^t X_i(u)du$, 
where the \textit{activity process} $t \mapsto T_i(t)$ represents the cumulative amount of active time 
of node $i$ in the time interval $[0,t]$. This is not independent of the input process. Intuitively, the 
average queue length increases when the node is inactive and decreases when the node is active, 
which means that packets are being served at a rate $c$ larger than their arrival rate, i.e., $c > \rho_U, 
\rho_V > 0$. Since all nodes in $V$ are initially inactive, for some time the queue length of these nodes 
in $V$ is not affected by their output process. However, as soon as a vertex in $V$ turns active, we 
have to consider its output process as well.

The choice of functions $g_U, g_V$ in \eqref{rgdef} determines the transition time of 
the network, since the activation rates of the nodes depend on them. We will assume that $g_U,g_V$ 
fall in the following class of functions:
\begin{equation}
\mathcal{G} = \Big\{g\colon\,\mathbb{R} \to \mathbb{R}_{\geq 0}\colon\,
g \text{ non-decreasing and continuous},
 g(\mathbb{R}_{\leq 0})=0, \lim_{x \to \infty} g(x) = \infty\Big\}. 
\end{equation}

\begin{definizione}[{\bf Models}] 
Let $g_U,g_V \in \mathcal{G}$ and $\delta > 0$. Assume (\ref{initialqueues}). The four models of interest 
in the present paper
are the following:
\begin{itemize}
\item 
In the \textit{internal model} the deactivation Poisson clocks tick at rate 1, while the activation 
Poisson clocks tick at rate 
\begin{equation}
\label{rint}
r_i^{\mathrm{int}}(t) = 
\left\{\begin{array}{ll} 
g_U(Q_i(t)), &i \in U, \\ 
g_V(Q_i(t)), &i \in V,
\end{array}
\right. 
\qquad t \geq  0.
\end{equation}
\item
In the \textit{external model} the deactivation Poisson clocks tick at rate 1, while the activation Poisson 
clocks tick at rate 
\begin{equation}
\label{rext}
r_i^{\mathrm{ext}}(t) = 
\left\{\begin{array}{ll} 
g_U(\gamma_U r - (c - \rho_U)t ), &i \in U, \\ 
g_V(\gamma_V r + \rho_V t), &i \in V, 
\end{array}
\right.
\qquad t \geq 0.
\end{equation}
\item
In the \textit{lower external model} the deactivation Poisson clocks tick at rate 1, while the activation 
Poisson clocks tick at rate 
\begin{equation}
\label{rlow}
r_i^{\text{low}}(t) = 
\left\{\begin{array}{ll} 
g_U(\gamma_U r - (c- \rho_U )t - \delta r ), &i \in U, \\ 
g_V(\gamma_V r + \rho_Vt + \delta r), &i \in V,
\end{array}
\right. 
\qquad t \geq 0.
\end{equation}
\item
In the \textit{upper external model} the deactivation Poisson clocks tick at rate 1, while the activation 
Poisson clocks tick at rate 
\begin{equation}
\label{rupp}
r_i^{\text{upp}}(t) = 
\left\{\begin{array}{ll} 
g_U(\gamma_U r - (c- \rho_U)t + 2 \delta r ), &i \in U, \\ 
g_V(\gamma_V r + \rho_Vt  - \delta r), &i \in V,
\end{array}
\right. 
\qquad t \geq 0.
\end{equation}
\end{itemize}
\hfill\qed
\end{definizione}

\noindent
Note that in the external models the rates depend on time via certain fixed parameters, while in the 
internal model the rates depend on time via the actual queue lengths at the nodes. In the lower 
external model the activation rates in $U$ tend to be \emph{less aggressive} than in the internal model 
(i.e., the activation clocks tick less frequently), while the activation rates in $V$ tend to be \emph{more 
aggressive}. In the upper external model the reverse is true: the activation rates in $U$ are more 
aggressive and the activation rates in $V$ are less aggressive. For simplicity, when considering 
the external model we sometimes write 
\begin{equation}
r_U(t) \text{ and } r_V(t)
\end{equation}
for the activation rate at time $t$ of a node in $U$ and a node in $V$. We will see that the upper 
external model is actually defined only for $t \in [0,T_U]$ with $T_U = \frac{\gamma_U}{c-\rho_U} r$ 
(see Section~\ref{S2} for details). However, the transition occurs with high probability before time
$T_U$.


\subsection{Main theorems}
\label{S1.3}

The main goal of the present paper is to compare the transition time of the internal model with that 
of the two external models. Through a large-deviation analysis of the queue length process at each 
of the nodes, we define a notion of \emph{good behavior} that allows us to define perturbed models 
with externally driven rates that \emph{sandwich} the queue lengths of the internal model and its 
transition time. We show with the help of \emph{coupling} that with high probability the asymptotic 
behavior of the mean transition time for the internal model is the same as for the external model. 

The metastable behavior and the transition time $\tau_v$ of a network in which the activation 
rates are time-dependent in a deterministic way was characterized in \cite{BdHNT17}, and the 
asymptotic distribution of $\tau_v$ was studied in detail. For $s \geq 0$, let 
\begin{equation} \label{nus}
\nu(s) = \frac{1}{\mathbb{E}_{u} [ \tau_v](s)}
\end{equation}
be the inverse mean transition time of the time-homogeneous model where we freeze the the activation rates $r_U$ and $r_V$ at time $s$, i.e., we consider the model with constant rates
\begin{equation}
r_i^{\text{ext}}(t) =
\left\{\begin{array}{ll} 
r_U(s),& i \in U, \\
r_V(s),& i \in V,
\end{array}
\right.
\qquad t \geq 0.
\end{equation}
Then, for any time scale $M=M(r)$ and any threshold $x \in [0, \infty)$,
\begin{equation}
\label{siamakpaper2}
\lim_{r \to \infty} \mathbb{P}_u \bigg(\frac{\tau_v}{M} > x \bigg) = 
\left\{
\begin{array}{ll}
0, &\text{ if } M \nu(Mx) \succ 1,\\
e^{-\int_0^xM \nu(Ms) ds}, &\text{ if } M \nu(Mx) \asymp 1,\\
1, &\text{ if } M \nu(Mx) \prec 1,
\end{array} 
\right.
\end{equation}
where $a \succ b$ means $b=o(a)$, $a \prec b$ means $a=o(b)$ and $a \asymp b$ means $a = \Theta(b)$. If we let $M_c$ be the unique solution of the equation
\begin{equation} 
\label{Mc}
M\nu(M)=1,
\end{equation}
then the transition occurs on the time scale $M_c$, in the sense that $\mathbb{P}_u(\tau_v > t) \approx 1$ 
for $t \prec M_c$ and $\mathbb{P}_u(\tau_v > t) \approx 0$ for $t \succ M_c$. On the critical time scale 
$M_c$, the transition time follows an exponential law with time-varying rate. It was proven in \cite{dHNT17} 
that, for a complete bipartite graph and $s \in [0, \infty)$,
\begin{equation} 
\label{meantaucomplete}
\mathbb{E}_u[\tau_v](s) = \frac{1}{|U|}\, r_U(s)^{|U|-1}\, [1+o(1)], \qquad r \to \infty.
\end{equation}

\begin{remark}[{\bf Scaling}]
\label{regimes}
{\rm In \cite[Theorem 1.3]{BdHNT17} a formula different from \eqref{siamakpaper2} was proved, 
which involves three regimes and is valid for a large class of bipartite graphs. For complete bipartite graphs, 
however, this formula produces the same asymptotics as \eqref{siamakpaper2}. We will return to this point in Section~\ref{S4}.}
\end{remark}

We want the nodes in $V$ to be more aggressive than the nodes in $U$, so that the transition from $u$ 
to $v$ can be viewed as the crossover from a ``metastable state'' to a ``stable state''. Therefore we assume from now on that 
\begin{equation} \label{aggress}
\lim_{x \to \infty} \frac{g_V(x)}{g_U(x)} = \infty,
\end{equation}
and we focus on activation rates for nodes in $U$ of the form $g_U(x) \sim G x^{\beta}$ as 
$x \to \infty$ (see Remark \ref{remarksvf} in Section \ref{S4} for more general activation rates 
$g_U$). We do not require any further assumption on the activation rates $g_V$ and we will see that, when \eqref{aggress} is satisfied, the asymptotic distribution of the transition time as $r \to \infty$ is independent of $g_V$. The following two theorems will be proven in Sections~\ref{sec4.1}--\ref{sec4.2} with 
the help of \eqref{nus}--\eqref{meantaucomplete}.

\begin{teorema}[{\bf Critical time scale in the external model}]
\label{thmMc}
Suppose that $g_U(x) \sim G x^{\beta}$ with $\beta,G \in (0,\infty)$. Then
\begin{equation}
M_c = F_c r^{1 \vee \beta (|U|-1)} \, [1+o(1)], \qquad r \to \infty,
\end{equation} 
with 
\begin{equation}
\label{factorFc}
F_c  = \left\{\begin{array}{ll} 
\frac{\gamma_U^{\beta(|U|-1)}}{|U|G^{-(|U|-1)}}, &\text{ if } \beta \in (0, \frac{1}{|U|-1}),\\[0.2cm]
\frac{\gamma_U}{|U|G^{-(|U|-1)} + (c-\rho_U)}, &\text{ if } \beta = \frac{1}{|U|-1},\\[0.2cm]
\frac{\gamma_U}{c-\rho_U}, &\text{ if } \beta = (\frac{1}{|U|-1},\infty).
\end{array}
\right.
\end{equation} 
\end{teorema}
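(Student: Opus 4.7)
The plan is to reduce the implicit equation \eqref{Mc} defining $M_c$ to an explicit asymptotic relation by inserting \eqref{meantaucomplete}, and then to analyze the resulting algebraic equation separately in the three regimes of $\beta$. First I would combine \eqref{nus} and \eqref{meantaucomplete} for the external model, in which $r_U(s) = g_U(\gamma_U r - (c-\rho_U) s)$, to rewrite $\nu(s) = |U|\, g_U(\gamma_U r - (c-\rho_U) s)^{-(|U|-1)}[1+o(1)]$. The equation $M\nu(M) = 1$ then becomes
\begin{equation}
\label{planeq}
M = \frac{1}{|U|}\, g_U\bigl(\gamma_U r - (c-\rho_U) M\bigr)^{|U|-1}\,[1+o(1)].
\end{equation}
Since the right-hand side is strictly decreasing in $M$ on $[0,T_U)$ and the left-hand side is strictly increasing, a unique solution $M_c \in [0,T_U)$ exists. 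Substituting the asymptotic form $g_U(x) \sim G x^\beta$ (valid because the argument will tend to infinity in all three cases), \eqref{planeq} becomes
\begin{equation}
\label{planeq2}
M \sim \frac{G^{|U|-1}}{|U|}\,\bigl[\gamma_U r - (c-\rho_U) M\bigr]^{\beta(|U|-1)}.
\end{equation}

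For $\beta < 1/(|U|-1)$ I would try the ansatz $M = F r^{\beta(|U|-1)}$; since $\beta(|U|-1) < 1$, this makes $(c-\rho_U) M = o(r)$, so $\gamma_U r - (c-\rho_U) M \sim \gamma_U r$ and \eqref{planeq2} directly yields $F = G^{|U|-1}\gamma_U^{\beta(|U|-1)}/|U|$, matching the first line of \eqref{factorFc}. For $\beta = 1/(|U|-1)$ equation \eqref{planeq2} becomes linear in $M$, and solving it gives $M \sim \gamma_U r / (|U| G^{-(|U|-1)} + (c-\rho_U))$, matching the second line of \eqref{factorFc}. For $\beta > 1/(|U|-1)$ I would set $K = M/r$ and rewrite \eqref{planeq2} as
\begin{equation}
\label{planeq3}
K \sim \frac{G^{|U|-1}}{|U|}\bigl[\gamma_U - (c-\rho_U) K\bigr]^{\beta(|U|-1)} r^{\beta(|U|-1) - 1};
\end{equation}
since the exponent of $r$ is positive and the left-hand side is bounded, the bracket on the right must tend to zero, forcing $K \to \gamma_U/(c-\rho_U)$, which gives the third line of \eqref{factorFc}.

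The hard part will be the supercritical case $\beta > 1/(|U|-1)$, where one must show not only that $M/r \to \gamma_U/(c-\rho_U)$ (the easy upper bound coming from $M < T_U$), but also that it does not approach this limit too slowly, so that \eqref{planeq3} can force the equality. Concretely, I would use monotonicity of the right-hand side of \eqref{planeq} together with two-sided estimates: an upper bound using $M \le T_U - \epsilon r$ (which would force the right-hand side to be at least of order $r^{\beta(|U|-1)} \gg r$, contradicting $M \le T_U$) yields $\gamma_U - (c-\rho_U) M/r = o(1)$; symmetrically, a lower bound shows this gap is not too small. Two secondary technicalities I would want to address are: (i) justifying that the $o(1)$ error in \eqref{meantaucomplete} can be made uniform in the relevant range of $s$, since we evaluate it at $s = M_c$ which itself depends on $r$; and (ii) verifying that the argument $\gamma_U r - (c-\rho_U) M_c$ tends to infinity in each regime so that the asymptotic form $g_U(x) \sim G x^\beta$ is applicable—this is immediate in the first two cases, and follows from the refined rate of convergence $K \to \gamma_U/(c-\rho_U)$ derived from \eqref{planeq3} in the third.
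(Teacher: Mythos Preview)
Your proposal is correct and follows essentially the same route as the paper: both reduce $M\nu(M)=1$ via \eqref{meantaucomplete} to the algebraic relation \eqref{planeq2} and then split into the same three regimes of $\beta$. The only minor difference is that in the supercritical case $\beta > 1/(|U|-1)$ the paper uses the finer ansatz $M_c = \alpha r - D r^{1-\delta}$, which additionally pins down the $O(r^{1/\beta(|U|-1)})$ width of the crossover window around $T_U$; your argument via $K = M/r \to \alpha$ yields only the leading order, which is all Theorem~\ref{thmMc} actually claims, and your explicit flagging of points (i) and (ii) is more careful than the paper's treatment.
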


\begin{teorema}[{\bf Transition time in the external model}] 
\label{thmexternal}
Suppose that $g_U(x) = G x^{\beta}$ with $\beta,G \in (0,\infty)$. Then
\begin{equation}
\label{scalingext}
\mathbb{E}_{u} [\tau_v^{\mathrm{ext}}] = F_c r^{1 \vee \beta(|U|-1)}\, [1+o(1)],  \qquad r \to \infty.
\end{equation} 
and
\begin{equation}
\lim_{r \to \infty} \mathbb{P}_u \bigg( \frac{\tau_v^{\mathrm{ext}}}
{\mathbb{E}_u[\tau_v^{\mathrm{ext}}]} > x \bigg) 
= \mathcal{P}(x), \qquad x \in [0,\infty),
\end{equation}
with
\begin{equation}
\label{Plaw}
\mathcal{P}(x) 
= \left\{\begin{array}{ll}
e^{-x}, &\text{ if } \beta \in (0, \frac{1}{|U|-1}),\, x \in [0,\infty),\\[0.2cm]
(1-C x)^{\frac{1-C}{C}}, &\text{ if } \beta = \frac{1}{|U|-1},\, x \in [0, \frac{1}{C}), \\[0.2cm]
0,  & \,\, \text {if } \beta = \frac{1}{|U|-1},\, x \in [\frac{1}{C}, \infty), \\[0.2cm]
1,  & \,\, \text {if } \beta \in (\frac{1}{|U|-1},\infty),\, x \in [0,1), \\[0.2cm]
0,  & \,\, \text {if } \beta \in (\frac{1}{|U|-1},\infty),\, x \in [1,\infty),
\end{array}
\right.
\end{equation}
and $C = \frac{F_c(c-\rho_U)}{\gamma_U} \in (0,1)$.
\end{teorema}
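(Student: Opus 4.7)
The plan is to deduce both parts of Theorem~\ref{thmexternal} from the general trichotomy (\ref{siamakpaper2}) together with the frozen-rate formula (\ref{meantaucomplete}) and the critical scale identified in Theorem~\ref{thmMc}. Substituting the polynomial form $g_U(x) = Gx^{\beta}$ into (\ref{meantaucomplete}) yields, for $s \in [0,T_U)$,
\begin{equation*}
\nu(s) = \frac{|U|\,[1+o(1)]}{G^{|U|-1}\bigl(\gamma_U r - (c-\rho_U)s\bigr)^{\beta(|U|-1)}}.
\end{equation*}
I take $M = F_c r^{1 \vee \beta(|U|-1)}$, which by Theorem~\ref{thmMc} satisfies $M\nu(M) = 1+o(1)$. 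Computing the limit of $M\nu(Ms)$ case by case and feeding the result into (\ref{siamakpaper2}) yields $\mathcal{P}(x)$; the scaling of the mean then follows by integrating the survival function, an interchange of limits that is automatic in cases (ii)--(iii) where the limit has compact support and in case (i) is handled by dominated convergence against an exponential majorant.

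In case (i), $\beta \in (0,1/(|U|-1))$, one has $M = o(r)$, so for any fixed $s$ the shift $(c-\rho_U)Ms$ is $o(r)$ and $\nu(Ms) = \nu(0)[1+o(1)]$ uniformly on compact sets. Combined with $M\nu(0) \to 1$, this gives $M\nu(Ms) \to 1$ and hence $\mathcal{P}(x) = e^{-x}$; integrating, $\mathbb{E}_u[\tau_v^{\mathrm{ext}}] = M[1+o(1)]$. In case (ii), $\beta = 1/(|U|-1)$ and $M = F_c r$; direct substitution together with the identity $F_c|U|G^{-(|U|-1)} = \gamma_U - F_c(c-\rho_U)$ (which follows from $M_c\nu(M_c)=1$) gives
\begin{equation*}
M\nu(Ms) = \frac{1-C}{1-Cs}\,[1+o(1)], \qquad s \in [0,1/C),
\end{equation*}
with $C = F_c(c-\rho_U)/\gamma_U \in (0,1)$ by the formula (\ref{factorFc}) for $F_c$. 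Integrating yields $\mathcal{P}(x) = (1-Cx)^{(1-C)/C}$ on $[0,1/C)$, and a standard beta-type integral gives $\int_0^{1/C}\mathcal{P}(x)\,dx = 1$, so $\mathbb{E}_u[\tau_v^{\mathrm{ext}}] = F_c r[1+o(1)]$.

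In case (iii), $\beta > 1/(|U|-1)$ and $M = F_c r = T_U$; for $s \in [0,1)$ a direct estimate yields $M\nu(Ms) \asymp r^{1-\beta(|U|-1)} \to 0$, so the subcritical branch of (\ref{siamakpaper2}) delivers $\mathcal{P}(x) = 1$ on $[0,1)$. The complementary claims, namely $\mathcal{P}(x) = 0$ for $x \in [1,\infty)$ in case (iii) and for $x \in [1/C,\infty)$ in case (ii), lie outside the reach of (\ref{siamakpaper2}) and require a separate argument: on these ranges $r_U(Mx) = 0$, so the $|U|$ nodes in $U$ cannot reactivate and each deactivates on the $O(1)$ time scale, after which the aggressiveness assumption (\ref{aggress}) guarantees that a node in $V$ activates essentially instantaneously, since $r_V(Mx)$ is of very large order. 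A short coupling with a time-homogeneous hard-core chain on the independent sets of $G$ with this very large $V$-rate then shows that $\tau_v^{\mathrm{ext}} \leq Mx$ with probability tending to one, giving $\mathcal{P}(x) = 0$ on the indicated intervals. This quantitative control of the endgame, after the activation rates in $U$ vanish, is the main technical point I expect; once in place it also yields $\mathbb{E}_u[\tau_v^{\mathrm{ext}}] = T_U[1+o(1)] = F_c r[1+o(1)]$ in case (iii), completing the theorem.
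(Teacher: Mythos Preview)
Your proposal is correct and follows essentially the same route as the paper: both derive the survival function from (\ref{siamakpaper2}) by computing $M\nu(Ms)$ case by case after substituting (\ref{meantaucomplete}), and then read off the mean by integrating $\mathcal{P}$.

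The one substantive difference is your handling of the ranges $x\geq 1/C$ in case~(ii) and $x\geq 1$ in case~(iii), which you flag as ``the main technical point'' and propose to resolve by a separate coupling-based endgame argument. The paper dispenses with this much more cheaply. In case~(ii) the formula $\mathcal{P}(x)=(1-Cx)^{(1-C)/C}$ already tends to $0$ as $x\uparrow 1/C$ (equivalently, $\int_0^x M\nu(Ms)\,ds\to\infty$), so monotonicity of the survival function forces $\mathcal{P}(x)=0$ for all $x\geq 1/C$ without any further analysis of what happens once $r_U$ vanishes. For case~(iii) the paper simply takes the formal limit $C\uparrow 1$ of the case~(ii) law, under which $(1-Cx)^{(1-C)/C}\to \mathbf{1}_{[0,1)}(x)$. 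Your direct use of the subcritical branch of (\ref{siamakpaper2}) on $[0,1)$ is arguably cleaner than this limit argument, but the upshot is the same; and for $x\geq 1$ you can again invoke monotonicity together with the fact that $\mathcal{P}(x)\downarrow 0$ as $x\uparrow 1$ along the refined scale $M_c=\alpha r - Dr^{1/\beta(|U|-1)}$ identified in Theorem~\ref{thmMc}, rather than building a new coupling.
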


\noindent
In other words, the mean transition time scales like $M_c$, while the distribution of the transition
time divided by its mean is exponential, truncated polynomial, respectively, deterministic (see 
Fig.~\ref{fig:dichotomy}).  

\begin{figure}[htbp]
\vspace{0.4cm}
\begin{center}
\setlength{\unitlength}{0.3cm}
\begin{picture}(10,8)(18,0)
{\thicklines
\qbezier(0,-1)(0,3)(0,7)
\qbezier(-1,0)(6,0)(12,0)
\qbezier(0.1,5)(3,0.2)(10,0.2)
}
\put(12.5,-.25){$x$}
\put(-.8,7.6){$\mathcal{P}(x)$}
{\thicklines
\qbezier(17,-1)(17,3)(17,7)
\qbezier(16,0)(23,0)(29,0)
\qbezier(17.1,5)(19,0.2)(23,0)
\qbezier(23,0)(25,0)(27,0)
}
\qbezier[40](17.1,5)(21,5)(23,0)
\put(29.5,-.25){$x$}
\put(16.2,7.6){$\mathcal{P}(x)$}
\put(22.5,-1.7){$\tfrac{1}{C}$}
\put(23,0){\circle*{0.35}}
{\thicklines
\qbezier(34,-1)(34,3)(34,7)
\qbezier(33,0)(40,0)(46,0)
\qbezier(34.1,5)(37,5)(40,5)
\qbezier(40,5)(40,2.5)(40,0)
}
\put(46.5,-.25){$x$}
\put(33.2,7.6){$\mathcal{P}(x)$}
\put(39.7,-1.5){$1$}
\put(40,0){\circle*{0.35}}

\end{picture}
\end{center}
\vspace{0.5cm}
\caption{\small Trichotomy for $x \mapsto \mathcal{P}(x)$: $\beta \in (0, \frac{1}{|U|-1}]$ (left);
$\beta = \frac{1}{|U|-1}$ (middle); $\beta \in (\frac{1}{|U|-1},\infty)$ (right). The curve in the 
middle is convex when $C \in (0,1/2)$ and concave when $C \in (1/2,1)$. The curve 
on the right is the limit of the curve in the middle as $C \uparrow 1$.}
\label{fig:dichotomy}
\vspace{0.2cm}
\end{figure}
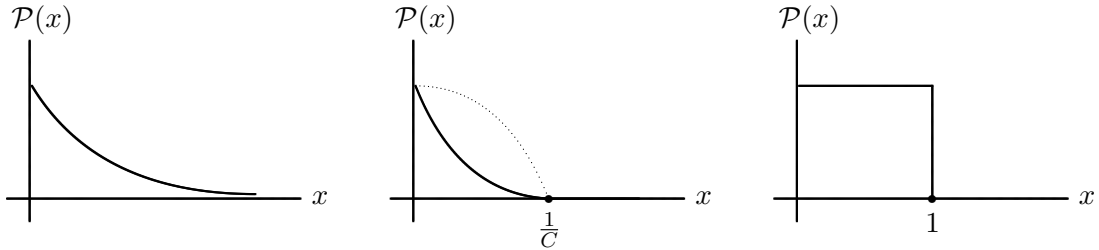
\noindent
As shown in Remark~\ref{remarksvf}, we can even include the case $\beta=0$, and get that 
if $g_U(x) =  \hat{\mathcal{L}}(x)$ with $\lim_{x\to\infty} \hat{\mathcal{L}}(x)=\infty$, then 
\begin{equation}
\mathbb{E}_{u} [ \tau_v^{\mathrm{ext}}]  = M_c\,[1+o(1)], \qquad 
M_c = \frac{1}{|U|}  \hat{\mathcal{L}}(\gamma_U r)^{|U|-1}\,[1+o(1)],  
\qquad r \to \infty,
\end{equation}
and $\mathcal{P}(x)=e^{-x}$, $x \in [0,\infty)$. Similar properties hold for the lower and the 
upper external model, with perturbed $F_{c,\delta}^{\text{low}}$ and $F_{c,\delta}^{\text{upp}}$ 
satisfying
\begin{equation}
\label{effe}
\lim_{\delta \downarrow 0}F_{c,\delta}^{\text{low}}
= \lim_{\delta \downarrow 0}F_{c,\delta}^{\text{upp}} = F_c.
\end{equation}

The main result in the present paper is the following sandwich of $\tau_v^{\text{int}}$ between 
$\tau_v^{\text{low}}$ and $\tau_v^{\text{upp}}$, for which we already know the asymptotic behavior. 
Because of this sandwich we can deduce the asymptotics of the transition time in the internal model.

\begin{teorema}[{\bf Transition time in the internal model}]
\label{thminternal}
For $\delta> 0$ small enough, there exists a coupling such that
\begin{equation}
\label{coup}
\lim_{r \to \infty} \mathbb{\hat{P}}_u \big(\tau_v^{\mathrm{low}} 
\leq \tau_v^{\mathrm{int}} \leq \tau_v^{\mathrm{upp}} \big) = 1,
\end{equation}
where $\mathbb{\hat{P}}_u$ is the joint law induced by the coupling, with all three models starting 
from the configuration $u$. Consequently, if $g_U(x) \sim G x^{\beta}$ with $\beta,G \in (0,\infty)$, 
then 
\begin{equation}
\label{meanasymp}
\mathbb{E}_{u} [ \tau_v^{\mathrm{int}}] = F_cr^{1 \vee \beta (|U|-1)} \, [1+o(1)], \qquad r \to \infty,
\end{equation} 
and
\begin{equation}
\lim_{r \to \infty} \mathbb{P}_u \bigg( \frac{\tau_v^{\mathrm{int}}}
{\mathbb{E}_u[\tau_v^{\mathrm{int}}]} > x \bigg) = \mathcal{P}(x), \qquad x  \in [0,\infty).
\end{equation}
\end{teorema}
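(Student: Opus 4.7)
The plan has two parts: (a) prove the pathwise sandwich \eqref{coup}, and (b) derive \eqref{meanasymp} and the distributional limit from Theorem~\ref{thmexternal} applied to the lower and upper external models, combined with \eqref{effe}.

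For (a), I would introduce a ``good event'' $\mathcal{E}_\delta$ that pins the internal queue lengths to their natural linear approximations up to an error of order $\delta r$, uniformly in $t \in [0, \tau_v^{\mathrm{int}}]$:
\[
\gamma_U r - (c-\rho_U)t - \delta r \;\leq\; Q_i(t) \;\leq\; \gamma_U r - (c-\rho_U)t + 2\delta r, \quad i \in U,
\]
and
\[
\gamma_V r + \rho_V t - \delta r \;\leq\; Q_i(t) \;\leq\; \gamma_V r + \rho_V t + \delta r, \quad i \in V.
\]
Prior to $\bar{\tau}_v$ the nodes in $V$ are idle (so $Q_i^-(t)=0$ for $i\in V$) and the nodes in $U$ are active nearly all the time, since they reactivate at rate $g_U(Q_i(t)) = \Omega(r^\beta)$ while deactivating at rate $1$; hence the queue lengths follow these linear profiles modulo a sub-linear fluctuation term coming from the centred compound Poisson input and from the vanishing idle fraction. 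A standard exponential-moment bound for the input process, combined with a union bound over a unit-scale discretization of $[0,T]$ with $T=O(r^{1\vee\beta(|U|-1)})$, yields $\mathbb{P}_u(\mathcal{E}_\delta)\to 1$ for every fixed $\delta>0$. The relaxation interval $(\bar{\tau}_v, \tau_v]$ has length $O_\mathbb{P}(1)$, because the remaining $U$ nodes deactivate at rate $|U|$ while the surviving $V$ nodes activate at rate $g_V(Q_i(t))\to\infty$ by \eqref{aggress}, so the buffer $2\delta r$ absorbs the queue drift on this short window and $\mathcal{E}_\delta$ persists up to $\tau_v^{\mathrm{int}}$.

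The coupling is a graphical thinning construction: at each node attach a single rate-$1$ Poisson process for deactivations (shared across all three models) and a single Poisson process of large rate $R$ for activation attempts, each attempt carrying an i.i.d.\ uniform mark $W\in[0,R]$; an attempt at time $t$ is accepted in model $M$ iff $W\le r_i^M(t)$ and no neighbor of $i$ is active in model $M$ at time $t^-$. Monotonicity of $g_U,g_V\in\mathcal{G}$ together with $\mathcal{E}_\delta$ gives the pointwise rate ordering $r_i^{\mathrm{low}}(t)\le r_i^{\mathrm{int}}(t)\le r_i^{\mathrm{upp}}(t)$ for $i\in U$ and the reverse ordering for $i\in V$. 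Under this coupling, the accepted $U$-activation epochs are nested lower $\subseteq$ internal $\subseteq$ upper and the accepted $V$-activation epochs are nested in the opposite direction. An induction on the ordered sequence of primitive events, using the bipartite structure, shows that the partial order ``$X^{\mathrm{upp}}$ has a superset of $U$-active nodes and a subset of $V$-active nodes relative to $X^{\mathrm{int}}$'' (and similarly internal versus lower) is preserved by every event type, and in particular that the hitting times of $v$ are ordered accordingly. This yields $\tau_v^{\mathrm{low}}\le \tau_v^{\mathrm{int}}\le \tau_v^{\mathrm{upp}}$ on $\mathcal{E}_\delta$, proving \eqref{coup}.

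For (b), Theorem~\ref{thmexternal} applies verbatim to the lower and upper external models with the same exponent $1\vee\beta(|U|-1)$ and perturbed prefactors $F_{c,\delta}^{\mathrm{low}}$, $F_{c,\delta}^{\mathrm{upp}}$, giving the corresponding mean asymptotics and the limiting law $\mathcal{P}$ with a perturbed constant $C_\delta$. Taking expectations in \eqref{coup}, passing to the limit $r\to\infty$ first and then $\delta\downarrow 0$, and invoking \eqref{effe}, I obtain \eqref{meanasymp}; the distributional convergence follows along the same lines from continuity of $\mathcal{P}$ in the parameter $C = F_c(c-\rho_U)/\gamma_U$. The main technical obstacle is the monotonicity step in the coupling: the hard-core constraint couples the dynamics of $U$ and $V$, so one has to verify that a deactivation in a dominated model cannot create a forbidden activation that breaks the ordering, and that an activation in a dominating model cannot force an impossible configuration. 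The bipartite topology keeps the case analysis manageable, but it is here that the work lies. A secondary subtlety is controlling $\mathcal{E}_\delta$ on a time horizon that itself grows like $r^{1\vee\beta(|U|-1)}$, which requires a slight uniformization of the standard Cram\'er-type large deviation estimate for the compound Poisson input.
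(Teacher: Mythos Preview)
Your plan is the paper's plan: a tube event for the queues, a thinning coupling to produce the activity ordering, the sandwich, and then the external asymptotics plus \eqref{effe}. There is however one structural difference in how the upper half of the sandwich is obtained, and it is precisely at the point you flag as ``where the work lies''.

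The paper does \emph{not} couple the internal model with the upper external model all the way to $\tau_v^{\mathrm{int}}$. It introduces an auxiliary \emph{isolated model} (nodes in $U$ ignore $V$ entirely) and proves the queue-length tube on the deterministic window $[0,T_U]$ for that model (Propositions~\ref{inputprocess}--\ref{outputprocess}, Corollary~\ref{bounds}). Since the internal and isolated dynamics coincide up to $\bar\tau_v$, the upper coupling yields only $\bar\tau_v^{\mathrm{int}}\le\tau_v^{\mathrm{upp}}$ (Proposition~\ref{proptwiddle}, Corollary~\ref{uem}); a separate ``negligible gap'' result, Theorem~\ref{cutoff}, giving $\tau_v^{\mathrm{int}}-\bar\tau_v^{\mathrm{int}}=o(1/g_V(r))$ w.h.p., then closes the sandwich in the limit. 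Your alternative is to argue that $\mathcal{E}_\delta$ persists through the relaxation interval $(\bar\tau_v,\tau_v]$ and run the coupling straight to $\tau_v^{\mathrm{int}}$. This can be made to work, but the line ``the buffer $2\delta r$ absorbs the queue drift'' is glib: once a $V$-node is active, the blocked $U$-queues \emph{increase} while $Q_U^{\mathrm{UB}}(t)$ decreases, and the margin $Q_U^{\mathrm{UB}}(\bar\tau_v)-Q_i(\bar\tau_v)$ is not guaranteed to be of order $\delta r$ --- it may be zero on $\mathcal{E}_\delta$. You would need an extra layer of slack (e.g.\ prove the tube with half-width $\delta r$ but define the upper model with $2\delta r$), which is easy but has to be said. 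The paper's indirection through the isolated model sidesteps this entirely. A minor inaccuracy: at $\bar\tau_v$ all $U$-nodes are already inactive (a $V$-node just activated in a complete bipartite graph), so there are no ``remaining $U$ nodes'' to deactivate; the relaxation is purely about the other $V$-nodes switching on, which is what makes the gap $o(1/g_V(r))$ rather than $O_{\mathbb P}(1)$.
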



\subsection{Discussion and outline}
\label{S1.4}

{\bf Theorems.} 
Theorem~\ref{thmexternal} gives the \emph{leading-order asymptotics} of the transition time 
in the external model, including the lower and the upper external model. Theorem~\ref{thminternal} 
is the main result of our paper and provides the \emph{leading-order asymptotics} of the transition 
time in the internal model, via the coupling in \eqref{coup} and the continuity property in \eqref{effe}. 
Equations (\ref{factorFc})--(\ref{scalingext}) identify the scaling of the transition time in terms of the 
model parameters. The trichotomy between $\beta \in (0, \frac{1}{|U|-1})$, $\beta = \frac{1}{|U|-1}$ 
and $\beta \in (\frac{1}{|U|-1},\infty)$ is particularly interesting, and leads to different limit laws for 
the transition time on the scale of its mean. 

\medskip\noindent
{\bf Interpretation of trichotomy.}
In order to interpret the above trichotomy, observe first of all that the activation rates of each of 
the nodes in $U$ remain of order $r^\beta$ almost all the way up $T_U$. Specifically, in the 
absence of the nodes in $V$, by time $y T_U$, $y \in [0,1)$, the queue lengths of the nodes 
in $U$ have decreased by roughly a fraction $y$, and their activation rates are approximately 
$G(1-y)^\beta r^\beta$. Hence the fraction of joint inactivity time of the nodes in $U$ is of order 
$(1/r^\beta)^{|U|}=r^{- \beta |U|}$, and all nodes in $U$ are simultaneously inactive for the first 
time after a period of order $r^{-\beta}/r^{-\beta|U|} = r^{\beta(|U|-1)}$, which is $o(r)$ when 
$\beta < \frac{1}{|U|-1}$. With the nodes in $V$ actually present, these then all activate and 
the transition occurs almost immediately with high probability (see Section~\ref{sec4.3}). Note 
that the queue lengths of the nodes in $U$ have only decreased by an amount of order 
$r^{\beta(|U|-1)} = o(r)$, and hence are still of order $r$. In contrast, when $\beta = \frac{1}{|U|-1}$, 
the probability that all nodes in $U$ become simultaneously inactive before time $y T_U$ is 
approximately $\pi(y)$ with $\pi(y) = 1-(1-y)^{(1-C)/C}$, $y \in [0,1)$ (see \eqref{Plaw}). Again, 
the nodes in $V$ then all activate and the transition occurs almost immediately with high 
probability. Note that the queue lengths in the nodes in $U$ have then dropped by a non-negligible 
fraction, but are still of order $r$. A potential scenario is that the nodes in $U$ are not all 
simultaneously inactive until their activation rates have become of a smaller order than 
$r^{\beta}$, due to the queue lengths no longer being of order $r$ just before time $T_U$. 
However, the fact that $\pi(y) \uparrow 1$ as $y \uparrow 1$ implies that this scenario has 
negligible probability in the limit. In contrast, this scenario does occur when $\beta > 
\frac{1}{|U|-1}$, implying that the crossover occurs in a narrow window around $T_U$ (see 
Sections~\ref{sec4.1}--\ref{sec4.2} for details). We will see that this window has size 
$O(r^{1/\beta(|U|-1)}) = o(r)$. In particular, the \emph{window gets narrower as the activation rate 
for nodes in $U$ increases}.

\medskip\noindent
{\bf Proofs.} 
We look at a single-node queue length process $t \mapsto Q(t)$ and prove that with high 
probability it follows a path that lies in a \emph{narrow tube around its mean path} (see
Fig.~\ref{fig:tubes}). We study separately the input process $t \mapsto Q^+(t)$ and the 
output process $t \mapsto Q^-(t)$: we use Mogulskii's theorem (a pathwise large-deviation 
principle) for the first, and Cram\'er's theorem (a pointwise large-deviation principle) for the 
second. We derive upper and lower bounds for the queue length process and we use these 
bounds to construct two couplings that allow us to compare the different models. 

\begin{center}
\begin{figure}[htbp]
\begin{tikzpicture}[
scale=0.60,
thick,
>=stealth',
dot/.style = {
draw,
fill = white,
circle,
inner sep = 0pt,
minimum size = 4pt
}
]
\coordinate (O) at (0,0);
\draw[->] (-0.3,0) -- (10,0) coordinate[label = {below:$t$}] (xmax);
\draw[->] (0,-0.3) -- (0,5.5) coordinate[label = {left:$Q_i(t)$}] (ymax);
\path[name path=x] (0,3) -- (6,0);  
\path[name path=low] (0,2.5) -- (5.5,0);
\path[name path=upp] (0,3.5) -- (6.5,0);
\draw      (0,2.5) -- (5,0) node[pos=0.5, below left] {LB};
\draw      (0,4.5) -- (7,1) node[pos=0.5, above right] {UB};
\draw  [dashed]    (0,3.5) -- (7,0) node[pos=0.5, above right] {};
\node[left, ] at (0,3.5) {$\gamma_U r$};
\draw[ <->] (1,3.05) --(1,3.95);
\node[right, ] at (1,3.35) {$\delta r$};
\node[above] at (7,4) {slope = $c-\rho_U$};
\node[below] at (7,0) {$T_U$};
\end{tikzpicture}
\hspace{0.5cm}
\begin{tikzpicture}[
scale=0.60,
thick,
>=stealth',
dot/.style = {
draw,
fill = white,
circle,
inner sep = 0pt,
minimum size = 4pt
}
]
\coordinate (O) at (0,0);
\draw[->] (-0.3,0) -- (10,0) coordinate[label = {below:$t$}] (xmax);
\draw[->] (0,-0.3) -- (0,5.5) coordinate[label = {left:$Q_j(t)$}] (ymax);
\path[name path=x] (0,3) -- (6,0);  
\path[name path=low] (0,2.5) -- (5.5,0);
\path[name path=upp] (0,3.5) -- (6.5,0);
\draw      (0,2.5) -- (6,5.5) node[pos=0.5, above left] {UB};
\draw    [dashed]  (6,5.5) -- (7.2,6.1) ;
\draw      (0,0.5) -- (6,3.5) node[pos=0.5,below right] {LB};
\draw   [dashed]   (6,3.5) -- (7.2,4.1) ;
\draw  [dashed]    (0,1.5) -- (6,4.5) node[pos=0.5, above right] {};
\draw  [dashed]    (6,4.5) -- (7.2,5.1)  {};
\node[above] at (7,1) {slope = $\rho_V$};
\node[left, ] at (0,1.5) {$\gamma_V r$};
\draw[ <->] (1,2.95) -- (1,2.05);
\node[right, ] at (1,2.65) {$\delta r$};
\end{tikzpicture}
\caption{\small Sketches of the tubes around the mean of the queue length processes, 
respectively, for a node $i \in U$ and a node $j \in V$.} 
\label{fig:tubes}
\end{figure}
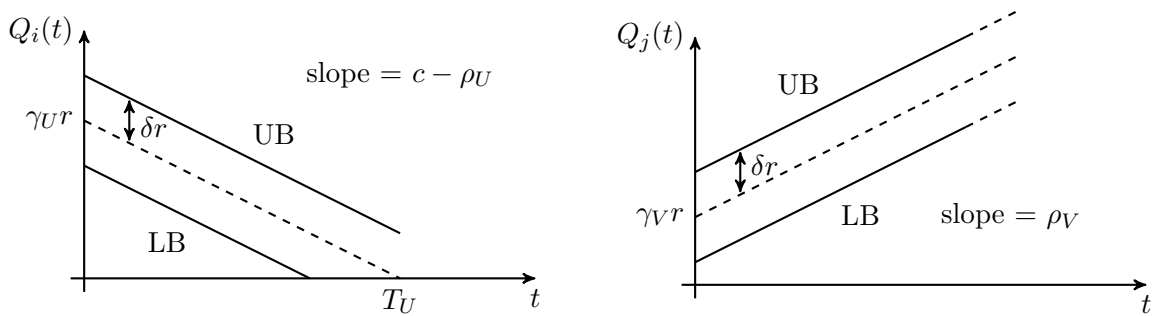
\vspace{-1cm}
\end{center}

\medskip\noindent
{\bf Dependent packet arrivals.} 
Our large-deviation estimates are so sharp that we can actually allow the Poisson processes 
of packet arrivals at the different nodes to be \emph{dependent}. Indeed, as long at the 
marginal processes are Poisson, our large-deviation estimates are valid at every single 
node, and since the network is finite a simple union bound shows that they are also 
valid for all nodes simultaneously, at the expense of a negligible factor that is equal to
the number of nodes. For modeling purposes independent arrivals are natural, but it is
interesting to allow for dependent arrivals when we want to study activation protocols that 
are more involved.  

\medskip\noindent
{\bf Open problems.} 
If we want to understand how small the term $o(1)$ in \eqref{meanasymp} actually is, then we 
need to derive sharper estimates in the coupling. One possibility would be to study moderate 
deviations for the queue length processes and to look at shrinking tubes. We do not pursue 
such refinements here. Our main focus for the future will be to extend the model to more complicated 
settings, where the activation rate at node $i$ depends also on the queue length at the neighboring 
nodes of $i$. We want to be able to compare models with (externally driven) time-dependent rates 
and models with (internally driven) queue-dependent rates, and show again that their metastable 
behavior is similar. We also want to move away from the complete bipartite interference graph 
and consider more general graphs that capture more realistic wireless networks.

\medskip\noindent
{\bf Other models.} 
There are other ways to define an internal model. We mention a few examples.
\begin{itemize}
\item[(i)]
A simple variant of our model is obtained by fixing the activation rates, but letting the rate 
at time $t$ of the Poisson deactivation clock of node $i$ depend on the reciprocal of the 
queue length at time $t$, i.e., $1/g_i(Q_i(t))$ for some $g_i \in \mathcal{G}$. This can be 
equivalently seen as a unit-rate Poisson deactivation clock, where node $i$ either deactivates 
with a probability reciprocal to $g_i(Q_i(t))$, or starts a second activity period. Nodes with 
a large queue length are more likely to remain active for a long time before switching off, 
while nodes with a short queue length have extremely short activity periods. If at time $t$ 
the activation clock of an inactive node with $Q_i(t) = 0$ ticks, then the node does not become 
active. On the other hand, if during an activity period the queue length of an active node hits 
zero, then the server switches itself off independently of the deactivation rate. For fixed 
activation and deactivation rates, this model and our internal model are equivalent up to a 
time scaling factor. In particular, they have similar stationary distributions.
\item[(ii)]
An alternative approach is to use a discrete notion of queue length, namely, $Q_i(t) = N_i(t) 
- S_i(t)$, where $N_i(t)$ is a Poisson process with rate $\lambda$, denoting the number of 
packets arriving at node $i$ during $[0,t]$, while $S_i(t)$ indicates the total number of times 
node $i$ turns active (or inactive) during $[0,t]$ (we may use $\lambda_U$ and $\lambda_V$ 
to represent different arrival rates for the two sets $U$ and $V$). The processes $t \mapsto S_i(t)$ 
and $t \mapsto N_i(t)$ are assumed to be independent. We can define a model where each 
time a node turns active it serves exactly one packet and then switches off again. The activation 
clocks still have rates $g_i(Q_i(t))$ with $g_i \in \mathcal{G}$. We can establish results similar 
to our internal model by adapting the arguments to the discrete setting.
\end{itemize}

\medskip\noindent
{\bf Outline.} 
The remainder of the paper is organized as follows. In Section~\ref{S2} we state large-deviation
bounds for the input and the output process, which allow us to show that the queue length process 
at every node has specific lower and upper bounds that hold with very high probability. The proofs of 
these bounds are deferred to Appendices~\ref{APP.a} and \ref{APP.b}. In Section~\ref{S3} we use the 
bounds to couple the lower and the upper external model (with rates \eqref{rlow} and \eqref{rupp}, 
respectively) to the internal model (with rates \eqref{rint}). In Section~\ref{S4} we derive the scaling
results for the external model, and combine these with the coupling to derive Theorem~\ref{thminternal} 
(as stated in Section~\ref{S1.3}).


\section{Bounds for the input and output processes}
\label{S2}

In this section we state the main results of our analysis of the input process and the output process
at a fixed node. With the help of path-large-deviation techniques, we show that with high probability 
the input process lies in a narrow tube around the deterministic path $t \mapsto (\lambda /
\mu)t$ (Proposition~\ref{inputprocess}). For simplicity, we suppress the index for the arrival rates 
$\lambda_U$ and $\lambda_V$, and consider a general rate $\lambda$. The same holds for 
$\rho = \lambda / \mu$. We study the output process only for nodes in $U$, and we give lower 
and upper bounds (Equation \eqref{outputupper} and Proposition~\ref{outputprocess}). We look at 
a single node and suppress its index, since the queues are independent of each other as 
long as the servers remain active or inactive. The proofs of the propositions below for the input 
process and the output process are given in Appendices~\ref{APP.a} and \ref{APP.b}, respectively. 

\begin{proposizione}[{\bf Tube for the input process}] 
\label{inputprocess}
For $\delta>0$ small enough and time horizon $S > 0$, let
\begin{equation}
\Gamma_{S,\delta S} = \left\{\gamma \in L_{\infty}([0,S])\colon\,
\frac{\lambda}{\mu}s  - \delta S  < \gamma(s) < \frac{\lambda}{\mu}s + \delta S
\,\,\,\forall\, s \in [0,S]\right\}.
\end{equation}
With high probability the input process lies inside $\Gamma_{S,\delta S}$ as $S\to\infty$, 
namely,
\begin{equation} 
\label{eqgamma}
\mathbb{P}\big(Q^+([0,S]) \notin \Gamma_{S, \delta S} \big) 
=  e^{-K_{\delta} S \, [1+o(1)]}, \qquad S \to \infty.
\end{equation}
\begin{equation}
K_{\delta} = (\lambda + \delta \mu) + \lambda - 2 \sqrt{\lambda(\lambda + \delta \mu)} \in (0, \infty).
\end{equation}
\end{proposizione}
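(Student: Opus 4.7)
The plan is to treat $Q^+$ as a compound Poisson (Lévy) process and derive matching exponential upper and lower bounds on the tube-exit probability via the standard Cramér/Esscher machinery. Its cumulant generating function is $\psi(\theta) = \lambda\theta/(\mu-\theta)$ for $\theta < \mu$, with Fenchel--Legendre dual $I(x) = \sup_\theta[\theta x - \psi(\theta)] = \mu x + \lambda - 2\sqrt{\lambda\mu x}$. A direct substitution of $x = \rho + \delta = (\lambda+\delta\mu)/\mu$ identifies $I(\rho+\delta) = (\sqrt{\lambda+\delta\mu} - \sqrt{\lambda})^2 = K_\delta$, which already explains where the advertised constant comes from.

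For the upper bound I would split the exit event into an upper exit $\{\sup_{s \le S}[Q^+(s) - \rho s] \ge \delta S\}$ and the symmetric lower exit. Pick the Legendre-optimal tilt $\theta^{*} = \mu - \mu\sqrt{\lambda/(\lambda+\delta\mu)} \in (0,\mu)$ (the maximizer of $\theta \mapsto \theta(\rho+\delta)-\psi(\theta)$) and consider the positive martingale $M_{\theta^{*}}(s) = \exp(\theta^{*} Q^+(s) - s\psi(\theta^{*}))$. A direct check gives $\theta^{*}\rho - \psi(\theta^{*}) < 0$; hence on $\{Q^+(s) \ge \rho s + \delta S\}$ for some $s \le S$, the algebra collapses to $M_{\theta^{*}}(s) \ge e^{\theta^{*}\delta S + s[\theta^{*}\rho - \psi(\theta^{*})]} \ge e^{SK_\delta}$, and Doob's maximal inequality gives $\mathbb{P}(\text{upper exit}) \le e^{-K_\delta S}$. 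For the matching lower bound I would use the Esscher change of measure $d\tilde{\mathbb{P}}/d\mathbb{P}|_{\mathcal{F}_S} = M_{\theta^{*}}(S)$: under $\tilde{\mathbb{P}}$ the process $Q^+$ is again a compound Poisson with tilted parameters $\tilde{\lambda} = \lambda\mu/(\mu-\theta^{*})$ and $\tilde{\mu} = \mu - \theta^{*}$, producing mean drift exactly $\rho + \delta$. By the central limit theorem under $\tilde{\mathbb{P}}$, the event $B = \{(\rho+\delta)S \le Q^+(S) \le (\rho+\delta)S + \sqrt{S}\}$ has $\tilde{\mathbb{P}}$-probability bounded below by a positive constant, and reverting the tilt produces $\mathbb{P}(\text{upper exit}) \ge \mathbb{P}(B) \ge c_0 e^{-K_\delta S - \theta^{*}\sqrt{S}} = e^{-K_\delta S[1+o(1)]}$.

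The symmetric analysis of the lower exit produces the larger rate $I(\rho-\delta) = (\sqrt{\lambda} - \sqrt{\lambda-\delta\mu})^2 > K_\delta$ (by strict concavity of $\sqrt{\cdot}$), so lower exits are exponentially subdominant and absorbed into the $[1+o(1)]$ correction. The main obstacle I anticipate is the uniform-in-time nature of the upper bound: a plain Chernoff bound on $Q^+(S)$ alone does not control the supremum over $[0,S]$ through a moving boundary. The point that makes the martingale argument go through is precisely the strict inequality $\theta^{*}\rho - \psi(\theta^{*}) < 0$, which is equivalent to saying that the variational problem $\min_{t_0 \in (0,1]} t_0 \, I(\rho + \delta/t_0)$ is attained at $t_0 = 1$; that is, the cheapest exit path is the straight line of slope $\rho+\delta$ travelling all the way to time $S$, and no earlier exit is asymptotically cheaper.
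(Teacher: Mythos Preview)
Your argument is correct and takes a genuinely different route from the paper. The paper works at the level of sample-path large deviations: it applies Mogulskii's theorem separately to the Poisson arrival process $N(\cdot)$ and to the cumulative service times conditioned on $N$, combines them into a product LDP on $L_\infty([0,T])\times L_\infty([0,T])$, and then contracts onto the first coordinate to obtain a path-space rate functional for $Q^+$; the constant $K_\delta$ is extracted by minimising that functional over the complement of the tube, the minimiser being the straight line of slope $\rho+\delta$. You instead treat $Q^+$ directly as a one-dimensional L\'evy process, compute its Cram\'er transform $I(x)=\mu x+\lambda-2\sqrt{\lambda\mu x}$ in closed form, and use the exponential martingale $M_{\theta^*}$ together with Doob's maximal inequality to control the supremum over $[0,S]$, with the Esscher tilt providing the matching lower bound. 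Your approach is more elementary---no Mogulskii machinery, no contraction principle---and even yields a non-asymptotic upper bound $e^{-K_\delta S}$ without the $[1+o(1)]$ correction; the paper's approach has the advantage of delivering the full sample-path LDP for $Q^+$, of which the tube estimate is just one consequence. Your key observation that $\theta^*\rho-\psi(\theta^*)<0$ (equivalently, that $t_0\mapsto t_0\,I(\rho+\delta/t_0)$ is minimised at $t_0=1$) is exactly the content of the paper's variational computation that the optimal exit path is linear and hits the boundary at the terminal time~$S$, and your remark that $I(\rho-\delta)>I(\rho+\delta)$ matches the paper's implicit choice of the $+\delta$ endpoint as the minimiser.
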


\noindent
(Note that $\Gamma_{S,\delta S}$ contains negative values. This is of no concern because
the path is always non-negative.)

We want to derive lower and upper bounds for the output process for a node in $U$. An upper 
bound is trivial by definition, namely,
\begin{equation}
\label{outputupper}
Q^-(t) \leq ct, \qquad t  \geq 0.
\end{equation}
It is more delicate to compute a lower bound, for which we need some preparatory definitions. 
We first introduce an auxiliary time that will be useful in our analysis.
\begin{definizione}[{\bf Auxiliary time $T_U$}] 
\label{defTu}
Given the initial queue length $Q(0) = \gamma_U r$ for a node in $U$, define $T_U$ to be the expected 
time at which the queue length hits zero if the transition has not occurred yet. We can write 
\begin{equation} 
\label{defT_U}
T_U= T_U(r) \sim \alpha r, \qquad r \to \infty,
\end{equation}
with
\begin{equation}
\label{alphadef}
\alpha = \frac{\gamma_U}{c-\rho_U}.
\end{equation}
\hfill\qed
\end{definizione}

\noindent
\begin{remark}
{ \rm The quantity $\alpha r$ is the expected time at which the queue length hits zero when the 
node is always active. Since the total inactivity time of a node in $U$ before time $T_U$ will turn 
out to be negligible compared to $\alpha r$, we have $T_U\sim \alpha r$ as $r\to\infty$.}
\end{remark}

Next, we introduce the isolated model, an auxiliary model that will help us to derive 
a lower bound for the output process. We will see later that the internal model behaves in exactly 
the same way as the isolated model up to the pre-transition time, in particular, the pre-transition 
times in the internal and the isolated model coincide in distribution.

\begin{definizione}[{\bf Isolated model}]
In the \emph{isolated model} the activation of nodes in $U$ is not affected by the activity states of nodes 
in $V$, i.e., they behave as if they were in isolation. On the other hand, nodes in $V$ are still affected by 
nodes in $U$, i.e., they cannot activate until every node in $U$ has become inactive. Nodes in $V$ have 
zero output process.
\hfill\qed
\end{definizione}

\noindent
We study the output process for the isolated model up to time $T_U$. We will see later in 
Corollary~\ref{internalbeforeTu} that the transition time in the internal model occurs with 
high probability before $T_U$, so it is enough to look at the time interval $[0,T_U]$. In the 
rare case when the transition does not occur before $T_U$, we expect it to occur in a very 
short time after $T_U$. We are now ready to give the lower bound for the output process.

\begin{proposizione}[{\bf Lower bound for the output process in the isolated model}]
\label{outputprocess}
Consider a node in $U$. For $\delta, \epsilon, \epsilon_1, \epsilon_2 > 0$ small enough, 
the output process satisfies 
\begin{equation}
\label{eqgammatilde'}
\begin{aligned}
\mathbb{P}\big(Q^-(t) < ct-\epsilon r \,\,\, \forall t \in [0,T_U] \big) 
\leq & \,e^{-K_{\delta} \alpha r \,[1+o(1)]} + e^{-K_1 r \,[1+o(1)]}\\ 
\qquad & \,+ e^{-\big(K_2 r + K_3 \frac{r}{g_U(r)}+K_4 r \log g_U(r)\big) \,[1+o(1)]}, 
\qquad r \to \infty,
\end{aligned}
\end{equation}
with 
\begin{equation}
\begin{split}
&K_1 = \bigg(\gamma_U-\frac{2\delta \alpha}{c- \rho_U}\bigg) 
\frac{\epsilon_1 - \log(1+ \epsilon_1)}{1+\epsilon_1},\\
&K_2 = \bigg(\gamma_U-\frac{2\delta \alpha }{c- \rho_U}\bigg) (1+\epsilon_1) 
\bigg(- 1 - \log \bigg(\frac{\epsilon_2}{ \big(\gamma_U-\frac{2\delta \alpha}{c- \rho_U}\big) 
(1+\epsilon_1)}\bigg)\bigg),\\
&K_3 = \epsilon_2,\\
&K_4 = \bigg(\gamma_U-\frac{2\delta \alpha}{c- \rho_U}\bigg)(1+\epsilon_1),
\end{split}
\end{equation}
satisfying $K_1, K_2, K_3, K_4 \in (0, \infty)$.
\end{proposizione}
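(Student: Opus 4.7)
The plan is to decompose the failure event $\{Q^-(t)<ct-\epsilon r\ \forall t\in[0,T_U]\}$ into three exponentially rare deviations and close with a union bound. First apply Proposition~\ref{inputprocess} to the node's input process with horizon $S=T_U\sim\alpha r$; this produces the term $e^{-K_\delta\alpha r[1+o(1)]}$, and on the complementary event $Q^+(t)\ge \rho_U t-\delta T_U$ for all $t\in[0,T_U]$. Combined with the trivial bound $Q^-(t)\le ct$, this gives
\[
Q(t)\ge \gamma_U r-\delta\alpha r-(c-\rho_U)t,\qquad t\in[0,T_U].
\]
Pick the truncated horizon $T^\ast:=\tfrac{r}{c-\rho_U}\bigl(\gamma_U-\tfrac{2\delta\alpha}{c-\rho_U}\bigr)$, slightly below $T_U$, on which the lower bound above remains of order $r$; hence the activation rate is uniformly bounded below by $g_\ast:=g_U(\eta r)$ for some $\eta>0$, with $g_\ast\asymp g_U(r)\to\infty$.

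Next, let $N(T^\ast)$ count the down-transitions on $[0,T^\ast]$. Each active spell is an independent $\mathrm{Exp}(1)$ wait, so $N(T^\ast)$ is stochastically dominated by the count of a rate-$1$ Poisson process on $[0,T^\ast]$; the event $\{N(T^\ast)>n_0\}$, with $n_0:=(1+\epsilon_1)(\gamma_U-\tfrac{2\delta\alpha}{c-\rho_U})r$, therefore forces the sample mean of $n_0$ iid $\mathrm{Exp}(1)$'s below $T^\ast/n_0=1/(1+\epsilon_1)$, and Cram\'er's theorem gives $\mathbb{P}(N(T^\ast)>n_0)\le e^{-K_1 r[1+o(1)]}$ with the stated $K_1$. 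Restricted to $\{N(T^\ast)\le n_0\}$, the cumulative inactive time is stochastically dominated by $S_{n_0}:=\sum_{i=1}^{n_0}\xi_i$ with $\xi_i\sim\mathrm{Exp}(g_\ast)$ iid. A Chernoff bound with a carefully tuned tilt $\theta<g_\ast$ then yields
\[
\mathbb{P}\Bigl(S_{n_0}>\tfrac{\epsilon r}{c}\Bigr)\le \exp\Bigl(-\bigl[K_2 r+K_3\tfrac{r}{g_U(r)}+K_4 r\log g_U(r)\bigr][1+o(1)]\Bigr),
\]
which is the third error term, and a union bound over the three layers closes the argument.

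The delicate part is the last step: the optimal Cram\'er exponent for $S_{n_0}$ is of order $g_U(r)\,r$, strictly dominating the stated rate $r\log g_U(r)$. The precise form in~\eqref{eqgammatilde'}, and in particular the vanishing cross term $K_3 r/g_U(r)$, must arise from a deliberately suboptimal Chernoff tilt engineered so that the three additive contributions $K_2 r$, $K_3 r/g_U(r)$ and $K_4 r\log g_U(r)$ split off with the prefactors stated ($K_3=\epsilon_2$, $K_4=(1+\epsilon_1)(\gamma_U-\tfrac{2\delta\alpha}{c-\rho_U})$, and $K_2$ the residual cross term). One also has to verify strict positivity of $K_1,\dots,K_4$ for $\delta,\epsilon,\epsilon_1,\epsilon_2$ sufficiently small, and handle the mild non-iid character of the genuine off-period rates through the stochastic domination by $g_\ast$.
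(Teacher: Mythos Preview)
Your three-layer decomposition is exactly the paper's strategy: input-process LDP on $[0,T_U]$ (Proposition~\ref{inputprocess}) for the first term and the queue lower bound; a truncated horizon $T_U^{**}=\alpha''r$, $\alpha''=(\gamma_U-2\delta\alpha)/(c-\rho_U)$, on which that lower bound stays of order $r$; Cram\'er's theorem on the $\mathrm{Exp}(1)$ activity periods to bound the number of down-transitions by $(1+\epsilon_1)T_U^{**}$; and Cram\'er again on the $\mathrm{Exp}(g_\ast)$-dominated inactivity periods. One step you omit is the extension from $[0,T^\ast]$ to $[0,T_U]$: the paper bounds the inactivity accrued on the residual strip $[T_U^{**},T_U]$ trivially by its length $2\delta\alpha r/(c-\rho_U)$, sets $\epsilon_2=\epsilon/(3c)$, and takes $\delta$ small enough so that both pieces together give $W(t)\le\epsilon r/c$ on the full interval. (A minor arithmetic slip: with your $T^\ast$ and $n_0$ as written, $T^\ast/n_0=1/\bigl((1+\epsilon_1)(c-\rho_U)\bigr)$, not $1/(1+\epsilon_1)$; the paper takes $n_0=(1+\epsilon_1)T_U^{**}$.)

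On the third term: the paper does \emph{not} engineer a suboptimal tilt. It applies Cram\'er's theorem directly, writing the rate function for the $\mathrm{Exp}(g_\ast)$ variables as $I(x)=x/g_\ast-1-\log x+\log g_\ast$; the exponent $n_0\,I\bigl(\epsilon_3/(1+\epsilon_1)\bigr)$ then expands term-by-term into the displayed $K_2 r+K_3\,r/g_U(r)+K_4\,r\log g_U(r)$. Your instinct that the sharp Cram\'er rate for $\mathrm{Exp}(g_\ast)$ is $I(x)=g_\ast x-1-\log(g_\ast x)$, with dominant contribution of order $g_U(r)\,r$, is correct---the paper's $I$ has $g_\ast$ and $1/g_\ast$ interchanged. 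This does not invalidate the proposition: the stated bound is simply weaker than the optimal one and still vanishes, which is all the downstream coupling needs. To reproduce the displayed constants verbatim, follow the paper's (looser) rate function; for a cleaner argument, use the correct one and note that the resulting probability is even smaller.
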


By combining the bounds for the input process and the output process, and picking 
$\delta = \epsilon$ and $S = r$, we obtain lower and upper bounds for the queue 
length process $Q(t)$ of a node in $U$.

\begin{corollario}[\bf Bounds for the queue length process in the isolated model]
\label{bounds}
For $\delta >0$ small enough, the following bounds hold with high probability as 
$r \to \infty$ for a node in $U$:
\begin{equation}
\begin{array}{lll}
(LB)_U\colon &Q(t) \geq Q_U^{\mathrm{LB}}(t)
 = \gamma_U r - (c -\rho_U) t - \delta r , &t \geq 0,\\[0.2cm]
(UB)_U\colon &Q(t) \leq Q_U^{\mathrm{UB}}(t)
= \gamma_U r - (c- \rho_U)t +2 \delta r , &t \in [0,T_U].
\end{array}
\end{equation}
Similarly, the following bounds hold with high probability as $r \to \infty$ for a node in $V$:
\begin{equation}
\begin{array}{lll}
(LB)_V\colon\qquad Q(t) \geq Q_V^{\mathrm{LB}}(t) 
= \gamma_V r + \rho_Vt - \delta r, &t \geq 0,\\[0.2cm]
(UB)_V\colon\qquad Q(t) \leq Q_V^{\mathrm{UB}}(t) 
= \gamma_V r + \rho_V t  + \delta r, &t \geq 0.
\end{array}
\end{equation}
\end{corollario}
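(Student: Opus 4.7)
The statement is essentially a clean consequence of Propositions~\ref{inputprocess} and \ref{outputprocess} combined with the definition of the isolated model, so my strategy is simply to combine the tube estimates for the input and output processes and take a union bound. Throughout, I fix $\delta>0$ sufficiently small and take $S$ of order $r$ (large enough that $S\geq T_U$ for all large $r$, which is possible since $T_U\sim \alpha r$), and I choose $\varepsilon=\delta$ in Proposition~\ref{outputprocess}.

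\textbf{Nodes in $U$.} Writing $Q(t)=\gamma_U r+Q^+(t)-Q^-(t)$, I would apply Proposition~\ref{inputprocess} with $\lambda/\mu=\rho_U$ to obtain the two-sided tube
\[
\rho_U t-\delta r \;<\; Q^+(t) \;<\; \rho_U t+\delta r \qquad \forall\, t\in[0,S],
\]
which fails with probability at most $e^{-K_\delta S[1+o(1)]}$. For the lower bound on $Q(t)$, I combine the left-hand inequality with the trivial estimate $Q^-(t)\leq ct$ from \eqref{outputupper}, valid for all $t\geq 0$, giving
\[
Q(t) \;\geq\; \gamma_U r+\rho_U t-\delta r-ct \;=\; \gamma_U r-(c-\rho_U)t-\delta r,
\]
which is $(LB)_U$. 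For the upper bound I use Proposition~\ref{outputprocess}, which gives $Q^-(t)\geq ct-\delta r$ for all $t\in[0,T_U]$ with failure probability the sum of the three exponentially small terms in \eqref{eqgammatilde'}, and combine with the right-hand side of the input tube:
\[
Q(t) \;\leq\; \gamma_U r+\rho_U t+\delta r-(ct-\delta r) \;=\; \gamma_U r-(c-\rho_U)t+2\delta r,
\]
for $t\in[0,T_U]$, which is $(UB)_U$. A union bound over the two good events shows that both $(LB)_U$ and $(UB)_U$ hold simultaneously with probability $1-o(1)$ as $r\to\infty$, since each complement has probability exponentially small in $r$.

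\textbf{Nodes in $V$.} This is where the isolated model simplifies matters: by definition, nodes in $V$ have zero output process, so
$Q(t)=\gamma_V r+Q^+(t)$ for all $t\geq 0$. A direct application of Proposition~\ref{inputprocess} with $\lambda/\mu=\rho_V$ yields
\[
\rho_V t-\delta r \;\leq\; Q^+(t) \;\leq\; \rho_V t+\delta r \qquad \forall\, t\in[0,S],
\]
with high probability, which translates immediately into $(LB)_V$ and $(UB)_V$ on $[0,S]$. Since $S$ was chosen of order $r$ and the arguments can be iterated (or $S$ simply chosen larger than any time horizon relevant for the transition, which occurs with high probability before $T_U$ by Corollary~\ref{internalbeforeTu}), these bounds hold on the needed range.

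\textbf{Expected obstacles.} The proof itself is a routine combination; the only delicate points are bookkeeping. First, one must make sure the constant in front of $r$ in the input tube matches the $\delta r$ in the statement, i.e.\ that picking the time horizon $S$ proportional to $r$ gives a tube of width $\delta r$ rather than $\delta S$ (so one really wants $\delta$ rescaled by the ratio $S/r$; choosing $S=r$, or absorbing this ratio into $\delta$, is harmless). Second, the upper bound $(UB)_U$ is restricted to $[0,T_U]$ because Proposition~\ref{outputprocess} itself controls $Q^-$ only on that interval; this restriction will later be justified by the fact that the transition occurs with high probability before $T_U$. Finally, the union bound requires that all exponents $K_\delta,K_1,K_2,K_3,K_4$ are strictly positive, which is already stated in the cited propositions, so no additional estimate is needed.
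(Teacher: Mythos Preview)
Your proposal is correct and follows exactly the approach the paper uses: the paper's own proof is a single sentence stating that the claims follow directly from Propositions~\ref{inputprocess} and~\ref{outputprocess} together with \eqref{outputupper}, and the text immediately preceding the corollary already records the choices ``picking $\delta=\epsilon$ and $S=r$'' that you identify. Your expansion of the bookkeeping (matching $\delta S$ to $\delta r$, the restriction of $(UB)_U$ to $[0,T_U]$, the vanishing output for $V$ in the isolated model) is accurate and simply fills in what the paper leaves implicit.
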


\begin{proof}
The claims follow directly from Propositions~\ref{inputprocess} and \ref{outputprocess} in 
combination with \eqref{outputupper}.
\end{proof}


\section{Coupling the internal and the external model} 
\label{S3}

In Sections~\ref{sec3.1} and \ref{sec3.2} we use the bounds defined in Section~\ref{S2} to construct two 
couplings that allow us to compare the internal and the external model (Proposition~\ref{lem}, respectively, 
Proposition~\ref{proptwiddle} and Corollary~\ref{uem}). Throughout the sequel we assume that the 
deactivation rates are fixed, i.e., the deactivation Poisson clocks ring at rate 1. A node can become active only if all its neighbors are inactive. If a node is inactive, then the activation Poisson clocks ring 
at rates that vary over time in a deterministic way, or as functions of the queue lengths.

We are interested in coupling the models in the time interval $[0,T_U]$ and on the following event.

\begin{definizione}[{\bf Good behavior}] 
\label{goodbehavior}
Let $\mathcal{E}_{\delta}$ be the event that the queue length processes all have 
\textit{good behavior} in the interval $[0,T_U]$, in the sense that
\begin{equation}
\label{eventgoodbehavior}
\begin{split}
\mathcal{E}_{\delta} = & \,\big\{ Q_U^{\mathrm{LB}}(t) \leq Q_i(t) \leq Q_U^{\mathrm{UB}}(t)\,\,
\forall\, t \in [0,T_U]\,\,\forall\, i \in U \big\} \\
& \,\cup \big\{ Q_V^{\mathrm{LB}}(t) \leq Q_i(t) \leq Q_V^{\mathrm{UB}}(t)\,\,
\forall\,t \in [0,T_U] \,\,\forall\, i \in V \big\},
\end{split}
\end{equation}
i.e., the paths lie between their respectively lower and upper bounds for nodes in $U$ and $V$. 
This event depends on the perturbation parameter $\delta$.
\hfill\qed
\end{definizione}

\begin{lemma}
\label{gbprob1}
For $\delta >0$ small enough,
\begin{equation}
\lim_{r \to \infty} \mathbb{P} (\mathcal{E}_{\delta}) = 1.
\end{equation}
\end{lemma}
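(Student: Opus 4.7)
The plan is to write $\mathcal{E}_\delta$ as a finite intersection of single-node tube events and then apply a union bound, using Corollary~\ref{bounds} as the per-node input.

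First I would unpack the definition of $\mathcal{E}_\delta$: it is the intersection, over $i \in U$, of the events $\{Q_U^{\mathrm{LB}}(t) \leq Q_i(t) \leq Q_U^{\mathrm{UB}}(t)\ \forall\, t \in [0,T_U]\}$ together with, over $i \in V$, the analogous events with $Q_V^{\mathrm{LB}},Q_V^{\mathrm{UB}}$. By Corollary~\ref{bounds}, each single-node event has probability $1-o(1)$ as $r \to \infty$; tracing through Propositions~\ref{inputprocess} and \ref{outputprocess} (together with the trivial output bound \eqref{outputupper}), the complement probabilities are in fact exponentially small in $r$, of the form $e^{-K(\delta)\, r\,[1+o(1)]}$ for some $K(\delta) > 0$.

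Next I would apply a union bound. Since $|U|+|V|$ is a \emph{fixed} finite number, independent of $r$,
\begin{equation*}
\mathbb{P}(\mathcal{E}_\delta^{c}) \;\leq\; \sum_{i \in U}\mathbb{P}\bigl(Q_i(\cdot)\notin[Q_U^{\mathrm{LB}},Q_U^{\mathrm{UB}}] \text{ on } [0,T_U]\bigr) + \sum_{i \in V}\mathbb{P}\bigl(Q_i(\cdot)\notin[Q_V^{\mathrm{LB}},Q_V^{\mathrm{UB}}] \text{ on } [0,T_U]\bigr),
\end{equation*}
and the right-hand side is a finite sum of exponentially decaying terms, which tends to $0$ as $r\to\infty$.

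The step that requires the most care, and which I view as the main obstacle, is that Corollary~\ref{bounds} is proved inside the isolated model, whereas $\mathcal{E}_\delta$ refers to the actual internal-model queues on $[0,T_U]$. Up to the pre-transition time $\bar\tau_v$ the internal and isolated models are equal in distribution (as noted in the remark preceding the definition of the isolated model), so the per-node bounds transfer verbatim on $[0,\bar\tau_v]$. On the remaining interval $[\bar\tau_v,T_U]$ some $V$-nodes may activate, but (i) the upper envelope $Q_V^{\mathrm{UB}}$ is automatically preserved, because the output process can only remove work from a queue; and (ii) the lower envelope $Q_V^{\mathrm{LB}}$ still holds because the analysis in Section~\ref{S4} will show that the transition from $u$ to $v$ is completed in a window of length $o(r)$ after $\bar\tau_v$, so at most $c\cdot o(r)=o(r)$ of additional service can take place at any $V$-node before time $T_U$, which stays well within the $\delta r$ slack. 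For $U$-nodes the bound from Proposition~\ref{outputprocess} already tolerates the small inactivity intervals induced by the queue-dependent activation rates $g_U(Q_i(\cdot))$, so the $U$-tube events are also valid in the internal model throughout $[0,T_U]$. Once this transfer from the isolated to the internal model is justified, the union bound above closes the proof.
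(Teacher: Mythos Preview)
Your core argument --- the finite union bound over single-node tube events, each controlled by Corollary~\ref{bounds} --- is exactly the paper's proof (which reads, in full, ``This follows directly from Corollary~\ref{bounds}'').

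The long final paragraph, however, is both unnecessary and partly wrong. Corollary~\ref{bounds} is stated for the isolated model, and the paper simply proves Lemma~\ref{gbprob1} in that setting; the link to the internal model is made elsewhere, through the couplings and Proposition~\ref{prop:coincide}, not inside this lemma. In fact, the only bounds that are ever paired with \emph{internal}-model queues (in $\mathcal{H}^{\mathrm{low}}$) are $(LB)_U$ and $(UB)_V$, and these depend only on the input process together with the trivial $Q^-(t)\le ct$, so they hold in every model without further argument. Your attempt to establish $(LB)_V$ for the internal queues on all of $[0,T_U]$ actually fails: you claim that since $\tau_v-\bar\tau_v=o(r)$, a $V$-node receives at most $c\cdot o(r)$ of service before $T_U$, but this ignores the interval $[\tau_v,T_U]$, during which the $V$-nodes are active. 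In the subcritical regime $\beta<\tfrac{1}{|U|-1}$ one has $\tau_v\asymp r^{\beta(|U|-1)}=o(r)$ while $T_U\asymp r$, so a $V$-node can accrue service of order $r$ there and $(LB)_V$ genuinely fails for the internal-model queues on $[\tau_v,T_U]$. This does no harm --- that bound is never used there --- but it means your extra paragraph should simply be dropped rather than repaired.
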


\begin{proof}
This follows directly from Corollary~\ref{bounds}.
\end{proof}

In what follows we couple on the event $\mathcal{E}_{\delta}$ only. The coupling can be 
extended in an arbitrary way off the event $\mathcal{E}_{\delta}$. The way this is done is 
irrelevant because of Lemma~\ref{gbprob1}. 


\subsection{Coupling the internal and the lower external model}
\label{sec3.1}

The lower external model defined in (\ref{rlow}) can also be described in the following way. 
At time $t \geq 0$ the activation rates are
\begin{equation}
r_i^{\text{low}}(t) = 
\left\{\begin{array}{ll} 
g_U(Q_U^{\mathrm{LB}}(t)), &i \in U, \\[0.2cm]
g_V(Q_V^{\mathrm{UB}}(t)), &i \in V.
\end{array}
\right. 
\end{equation}
\begin{remark}
Note that when the lower bound $Q_U^{\mathrm{LB}}(t)$ becomes negative the activation rate $g_U$ 
is zero by definition. In this way we are able extend the coupling to any time $t \geq 0$, even though 
we consider only the interval $[0,T_U]$.
\end{remark}

\begin{lemma}
\label{lemTu}
With high probability as $r \to \infty$, the transition time in the lower external model is smaller than 
$T_U$, i.e., 
\begin{equation}
\lim_{r \to \infty} \mathbb{P}_u ( \tau_v^{\mathrm{low}} \leq T_U) = 1.
\end{equation}
\end{lemma}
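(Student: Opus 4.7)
The plan is to argue directly (without invoking the trichotomy (\ref{siamakpaper2})), by exploiting the fact that in the lower external model the \emph{deterministic} activation rate of every node in $U$ is driven to zero strictly before $T_U$, so that the dynamics after that time are essentially trivial.

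Set $T_U^{\ast} := T_U - \tfrac{\delta}{c-\rho_U}\,r$. By (\ref{rlow}), for every $t \geq T_U^{\ast}$ and every $i \in U$ the argument of $g_U$ is $\leq 0$ and hence $r_i^{\text{low}}(t)=0$, since $g_U$ vanishes on $\mathbb{R}_{\leq 0}$. From $T_U^{\ast}$ on, no node in $U$ can ever reactivate and each currently active node in $U$ deactivates independently at unit rate, so the time $\tau'$ elapsed after $T_U^{\ast}$ until every node in $U$ is inactive is stochastically dominated by the maximum of $|U|$ i.i.d.\ $\mathrm{Exp}(1)$ variables; in particular $\mathbb{E}[\tau'] \leq H_{|U|}=\sum_{k=1}^{|U|} 1/k$. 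Once all nodes in $U$ are inactive, every inactive node in $V$ activates at rate at least $g_V(\gamma_V r) \to \infty$ (by (\ref{rlow}) and $g_V \in \mathcal{G}$), and the number of active $V$-nodes evolves as a birth--death chain on $\{0,\dots,|V|\}$ with up-rates $\geq (|V|-N)\,g_V(\gamma_V r)$ and down-rates $\leq N$. The standard first-passage formula (or a cruder pure-birth lower bound) gives mean hitting time of $|V|$ equal to $O(1/g_V(\gamma_V r))=o(1)$; call this extra time $\tau''$, so $\mathbb{E}[\tau'']=o(1)$.

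Combining the two phases, on the event $\{\tau_v^{\text{low}}>T_U^{\ast}\}$ we have $\tau_v^{\text{low}} \leq T_U^{\ast} + \tau' + \tau''$ with $\mathbb{E}[\tau'+\tau''] = H_{|U|}\,(1+o(1))$, and on the complementary event $\tau_v^{\text{low}} \leq T_U^{\ast} < T_U$ already. Markov's inequality then yields
\[
\mathbb{P}_u\bigl(\tau_v^{\text{low}} > T_U\bigr)
\;\leq\; \mathbb{P}\bigl(\tau'+\tau''>T_U-T_U^{\ast}\bigr)
\;\leq\; \frac{H_{|U|}\,(1+o(1))}{\tfrac{\delta}{c-\rho_U}\,r}
\;\longrightarrow\;0.
\]

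The only mildly delicate step is the birth--death estimate for $\tau''$, since an activated $V$-node may deactivate before all the others activate; however the ratio of down-rates to up-rates is at most $|V|/g_V(\gamma_V r) \to 0$, so stochastic domination by a pure-birth process of rate $g_V(\gamma_V r)$ (or the explicit mean-hitting-time formula for a birth--death chain) is more than enough to get the $O(1/g_V(\gamma_V r))$ bound.
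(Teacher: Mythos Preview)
Your argument is correct and, in fact, more self-contained than the paper's. The paper's own proof of Lemma~\ref{lemTu} is a forward reference: it invokes the analysis in Section~\ref{sec4.2} (which establishes $\tau_v^{\mathrm{ext}}\leq T_U$ with high probability via the trichotomy in~(\ref{siamakpaper2})--(\ref{meantaucomplete})) and then argues by continuity in $\delta$ that the same conclusion holds for the $\delta$-perturbed lower external model. Your route sidesteps all of that by directly exploiting the structural feature specific to the \emph{lower} model, namely that the deterministic argument $\gamma_U r-(c-\rho_U)t-\delta r$ hits zero at a time $T_U^{\ast}$ that is a distance $\frac{\delta}{c-\rho_U}r$ \emph{below} $T_U$, so that after $T_U^{\ast}$ the $U$-activation rates vanish identically. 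The remainder of the argument (bounded time for the $U$-nodes to die out, plus an $O(1/g_V(\gamma_V r))$ birth--death estimate for the $V$-nodes to fill in) is elementary and correct; Markov's inequality against the order-$r$ gap $T_U-T_U^{\ast}$ finishes it. The advantage of your approach is that it is logically independent of Section~\ref{sec4.2} and hence avoids the forward dependency; the advantage of the paper's approach is economy (one sentence) and that the same continuity idea is reused verbatim for Lemma~\ref{uemTu}, where your vanishing-rate trick is not available since the upper model shifts the $U$-argument \emph{up} by $2\delta r$.
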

\begin{proof}
As we will see in Section~\ref{sec4.2}, with high probability the transition time in the external model 
is smaller than $T_U$. Since the lower external model is defined for an arbitrarily small perturbation 
$\delta >0$, we conclude by using the continuity of $g_U, g_V$.
\end{proof}

We introduce a system $\mathcal{H}^{\text{low}}$ that allows us to couple the internal model with 
the lower external model.
\begin{itemize}
\item[$\bullet$]
($\mathcal{H}^{\text{low}}$) Suppose that $h_i(t) \geq \max \{Q_U^{\mathrm{UB}}(t), 
Q_V^{\mathrm{UB}}(t) \}$ for all $i \in U \cup V$ and all $t \in [0, T_U]$. Consider a 
system $\mathcal{H}^{\text{low}}$ where clocks are associated with each node in the 
following way:
\begin{itemize}
\item 
A Poisson deactivation clock ticks at rate 1. Both the nodes in the lower external model and 
in the internal model are governed by this clock:
\begin{itemize}
\item[--] 
if both nodes are active, then they become inactive together; 
\item[--]
if only one node is active, then it becomes inactive; 
\item[--]
if both nodes are inactive, then nothing happens.
\end{itemize}
\item 
A Poisson activation clock ticks at rate $g_U(h_i(t))$ at time $t$ for a node $i \in U$. 
Both the nodes in the lower external model and in the internal model are governed by 
this clock:
\begin{itemize}
\item[--]
if both nodes are active, or both are inactive but have active neighbors, then nothing happens; 
\item[--]
if the node in the internal model is active and the node in the lower external model is not, 
then the latter node becomes active (if it can) with probability 
\begin{equation}
\frac{r_i^{\text{low}}(t)}{g_U(h_i(t))};
\end{equation}
\item[--]
if both nodes are inactive but can be activated, then this happens with probabilities 
\begin{equation}
\begin{aligned}
&\frac{r_i^{\text{low}}(t)}{g_U(h_i(t))} \quad \text{ for the lower external model},\\
&\frac{r_i^{\text{int}}(t)}{g_U(h_i(t))} \quad \text{ for the internal model},
\end{aligned}
\end{equation}
where 
\begin{equation}
\label{bd1}
\frac{r_i^{\text{low}}(t)}{g_U(h_i(t))} \leq \frac{r_i^{\text{int}}(t)}{g_U(h_i(t))},
\end{equation}
in such a way that if the node in the lower external model activates, then it also activates 
in the internal model.
\end{itemize}
\item 
A Poisson activation clock ticks at rate $g_V(h_i(t))$ at time $t$ for a node $i \in V$. The 
same happens as for the nodes in $U$, but the activation probabilities are 
\begin{equation}
\begin{aligned}
&\frac{r_i^{\text{low}}(t)}{g_V(h_i(t))} \quad \text{ for the lower external model},\\ 
&\frac{r_i^{\text{int}}(t)}{g_V(h_i(t))} \quad \text{ for the internal model},
\end{aligned}
\end{equation} 
where 
\begin{equation}
\label{bd2}
\frac{r_i^{\text{low}}(t)}{g_U(h_i(t))} \geq \frac{r_i^{\text{int}}(t)}{g_U(h_i(t))},
\end{equation}
in such a way that if the node in the internal model activates, then it also activates in the lower 
external model.
\end{itemize}
\end{itemize}

With the constructions above, we are now able to compare the transition times of the two 
models. 

\begin{proposizione}[{\bf Comparison between the internal and the lower external model}] 
\label{lem}
$\mbox{}$
\begin{itemize}
\item[(i)] 
Under the coupling $\mathcal{H}^\mathrm{low}$, the joint activity processes in the internal and 
in the lower external model are ordered for all $t \in [0, T_U]$, i.e.,
\begin{equation}
\begin{aligned}
&X_i^{\mathrm{low}}(t) \leq X_i^{\mathrm{int}}(t), &i \in U, \\
&X_i^{\mathrm{int}}(t) \leq X_i^{\mathrm{low}}(t), &i \in V.
\end{aligned}
\end{equation}
\item[(ii)] 
With high probability as $r \to \infty$, the transition time $\tau_v^{\mathrm{int}}$ in the internal 
model is at least as large as the transition time $\tau_v^{\mathrm{low}}$ in the lower external model, 
i.e.,
\begin{equation}
\label{eq.lemaltalt}
\lim_{r \to \infty} \mathbb{\hat{P}}_u (\tau_v^{\mathrm{low}} \leq \tau_v^{\mathrm{int}}) =1,
\end{equation}
where $\mathbb{\hat{P}}_u$ is the joint law induced by the coupling with starting configuration $u$.
\end{itemize}
\end{proposizione}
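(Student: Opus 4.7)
The plan is to argue entirely by a direct monotone coupling argument on the good-behavior event $\mathcal{E}_{\delta}$ of Definition~\ref{goodbehavior}, which by Lemma~\ref{gbprob1} has probability tending to $1$ as $r \to \infty$. The starting observation is that on $\mathcal{E}_{\delta}$ the internal queue lengths satisfy $Q_U^{\mathrm{LB}}(t) \leq Q_i(t) \leq Q_U^{\mathrm{UB}}(t)$ for $i \in U$ and $Q_V^{\mathrm{LB}}(t) \leq Q_i(t) \leq Q_V^{\mathrm{UB}}(t)$ for $i \in V$ throughout $[0,T_U]$, so the monotonicity of $g_U$ and $g_V$ immediately gives \eqref{bd1} and \eqref{bd2}, which is what makes the nested activation probabilities in $\mathcal{H}^{\mathrm{low}}$ well defined.

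For part (i), I would prove the two ordering relations simultaneously by induction on the sequence of clock ticks of $\mathcal{H}^{\mathrm{low}}$. Both inequalities hold trivially at $t=0$, since both copies start in $u$. At each subsequent tick I would check case by case that the ordering is preserved. For a deactivation clock at $i \in U$ the hypothesis $X_i^{\mathrm{low}} \leq X_i^{\mathrm{int}}$ leaves only the subcases ``both active'' (both deactivate together) and ``only internal active'' (only the internal copy deactivates), both preserving the inequality; the deactivation case for $i \in V$ is symmetric. For an activation clock at $i \in U$ I would use that activation requires all $V$-neighbors to be inactive in the same copy; by the inductive inequality $X_j^{\mathrm{int}} \leq X_j^{\mathrm{low}}$ for $j \in V$, feasibility in the lower external copy implies feasibility in the internal copy. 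Combined with the nested coupling of the Bernoulli activation outcomes prescribed by $\mathcal{H}^{\mathrm{low}}$ together with \eqref{bd1}, whenever the lower external activates the internal one does too, preserving $X_i^{\mathrm{low}} \leq X_i^{\mathrm{int}}$. The activation case for $i \in V$ is analogous, using the ordering on the $U$-side and \eqref{bd2} in the reverse direction.

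Part (ii) then follows by a short contradiction argument. By Lemma~\ref{lemTu}, $\tau_v^{\mathrm{low}} \leq T_U$ with high probability, so on the event $\mathcal{E}_{\delta} \cap \{\tau_v^{\mathrm{low}} \leq T_U\}$ the ordering of (i) is valid on $[0,\tau_v^{\mathrm{low}}]$. If $\tau_v^{\mathrm{int}} < \tau_v^{\mathrm{low}}$, then evaluating the two inequalities of (i) at time $\tau_v^{\mathrm{int}}$ with $X^{\mathrm{int}} = v$ forces $X_i^{\mathrm{low}}(\tau_v^{\mathrm{int}}) = 0$ for all $i \in U$ and $X_i^{\mathrm{low}}(\tau_v^{\mathrm{int}}) = 1$ for all $i \in V$, i.e.\ $X^{\mathrm{low}}(\tau_v^{\mathrm{int}}) = v$, contradicting the definition of $\tau_v^{\mathrm{low}}$. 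Since the intersection event has probability tending to $1$, this yields \eqref{eq.lemaltalt}.

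The main obstacle, and the step I would scrutinise most carefully, is the activation case in (i). A priori, an activation attempt feasible in one copy need not be feasible in the other, because the two copies can in general have different sets of active neighbors. What rescues the argument is the \emph{dual-sided} form of the two ordering relations: the inequality on the opposite side always points in the direction that makes feasibility in the ``more restrictive'' copy imply feasibility in the ``less restrictive'' one. Once this is noted, the nested coupling of the activation probabilities in $\mathcal{H}^{\mathrm{low}}$ produces the desired monotonicity at every tick, and the rest of the argument is essentially bookkeeping.
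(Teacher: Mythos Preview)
Your proposal is correct and follows essentially the same route as the paper's proof: monotonicity of $g_U,g_V$ on the good-behavior event yields the rate comparisons \eqref{bd1}--\eqref{bd2}, the coupling $\mathcal{H}^{\mathrm{low}}$ then propagates the two-sided ordering on activity states, and part~(ii) follows on $\mathcal{E}_\delta \cap \{\tau_v^{\mathrm{low}} \leq T_U\}$ via Lemma~\ref{lemTu}. Your induction on clock ticks and your explicit treatment of the feasibility issue (that the dual-sided ordering makes feasibility in the tighter copy imply feasibility in the looser one) spell out carefully what the paper leaves implicit; the paper's own argument is terser and simply asserts the ordering from \eqref{bd1}--\eqref{bd2} without walking through the cases.
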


\begin{proof}
\begin{itemize}
\item[(i)] 
For each node $i \in U$ and for all $t \in [0, T_U]$, $Q_i^{\mathrm{LB}}(t) \leq Q_i(t)$ and 
$g_U(Q_i^{\mathrm{LB}}(t)) \leq g_U(Q_i(t))$ by the monotonicity of the function $g_U$. 
On the other hand, for each node $i \in V$, $Q_i(t) \leq Q_i^{\mathrm{UB}}(t)$ and 
$g_V(Q_i(t)) \leq g_V(Q_i^{\mathrm{UB}}(t))$ by the monotonicity of the function $g_V$. 
Under the system $\mathcal{H}^{\text{low}}$, at any moment the random variable describing 
the state of a node $ i \in U$ in the lower external model is dominated by the one in the 
internal model, i.e., by \eqref{bd1} for all $t \in [0, T_U]$,
\begin{equation}
X_i^{\text{low}}(t) \leq X_i^{\text{int}}(t).
\end{equation}
On the other hand, the random variable describing the state of a node $j \in V$ in the lower 
external model dominates the one in the internal model, i.e., by \eqref{bd2} for all $t \in [0, T_U]$,
\begin{equation}
X_i^{\text{int}}(t) \leq X_i^{\text{low}}(t).
\end{equation}
Hence the joint activity processes in the two models are ordered.
\item[(ii)]
By construction of the coupling and the ordering above, on the event $\mathcal{E}_{\delta}$ 
the nodes in $U$ in the lower external model turn off earlier than in the internal model, and also 
nodes in $V$ turn on earlier in the lower external model. Hence the transition occurs earlier
in the lower external model.

Note that we are able to compare the transition times only when $\tau_v^{\mathrm{low}} \leq T_U$, 
so we look at the coupling also on the event $\{\tau_v^{\mathrm{low}} \leq T_U\}$, which has high 
probability as $r \to \infty$ (Lemma~\ref{lemTu}). On this event we have $\tau_v^{\mathrm{low}} 
\leq \tau_v^{\mathrm{int}}$. Therefore 
\begin{equation}
1 =  \lim_{r \to \infty} \mathbb{\hat{P}}_u (\mathcal{E}_{\delta}, \tau_v^{\mathrm{low}} 
\leq T_U, \tau_v^{\mathrm{low}} \leq \tau_v^{\mathrm{int}}) 
= \lim_{r \to \infty} \mathbb{\hat{P}}_u ( \tau_v^{\mathrm{low}} \leq \tau_v^{\mathrm{int}}).
\end{equation}
\end{itemize}
\end{proof}


\subsection{Coupling the isolated and the upper external model}
\label{sec3.2}

The upper external model defined in (\ref{rupp}) can also be described in the following way. At time 
$t \in [0,T_U]$ the activation rates are
\begin{equation}
r_i^{\text{upp}}(t) = 
\left\{\begin{array}{ll}
g_U(Q_U^{\mathrm{UB}}(t)), &i \in U, \\[0.2cm] 
g_V(Q_V^{\mathrm{LB}}(t)), &i \in V. 
\end{array}
\right. 
\end{equation}

\begin{lemma}
\label{uemTu}
With high probability as $r \to \infty$, the transition time in the upper external model is smaller than 
$T_U$, i.e., 
\begin{equation}
\lim_{r \to \infty} \mathbb{P}_u (\tau_v^{\mathrm{upp}} \leq T_U) = 1.
\end{equation}
This statement is to be read as follows. Let $\delta$ be the perturbation parameter in the upper external 
model appearing in \eqref{rupp}. Then for every $\delta>0$ there exists a $\delta'(\delta)>0$, satisfying 
$\lim_{\delta \downarrow 0} \delta'(\delta) = 0$, such that $\lim_{r \to \infty} \mathbb{P}_u
(\tau_v^{\mathrm{upp}} \leq [1+\delta'(\delta)]T_U) = 1$.   
\end{lemma}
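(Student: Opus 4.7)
The plan is to recognise the upper external model as itself an external model with shifted parameters, invoke the scaling results of Theorem \ref{thmexternal} (to be proved in Section \ref{sec4.2}) applied to those shifted parameters, and then read off the claim in each of the three regimes of $\beta$.

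First, I would observe that the rates in \eqref{rupp} are of exactly the form \eqref{shdef}: setting $\tilde\gamma_U = \gamma_U + 2\delta$ and $\tilde\gamma_V = \gamma_V - \delta$, we have $r_i^{\mathrm{upp}}(t) = g_U(\tilde\gamma_U r - (c-\rho_U)t)$ for $i \in U$ and $r_i^{\mathrm{upp}}(t) = g_V(\tilde\gamma_V r + \rho_V t)$ for $i \in V$. The arguments that will establish Theorem \ref{thmexternal} for the external model apply verbatim here, yielding
\begin{equation*}
\mathbb{E}_u[\tau_v^{\mathrm{upp}}] = F_{c,\delta}^{\mathrm{upp}}\, r^{1\vee\beta(|U|-1)}[1+o(1)], \qquad r \to \infty,
\end{equation*}
where $F_{c,\delta}^{\mathrm{upp}}$ is obtained from \eqref{factorFc} by substituting $\tilde\gamma_U$ for $\gamma_U$, together with the limit law for $\tau_v^{\mathrm{upp}}/\mathbb{E}_u[\tau_v^{\mathrm{upp}}]$ of the form \eqref{Plaw} with $C$ replaced by $C_\delta^{\mathrm{upp}} = F_{c,\delta}^{\mathrm{upp}}(c-\rho_U)/\tilde\gamma_U$.

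Next, I would combine these mean asymptotics with the limit law to bound $\tau_v^{\mathrm{upp}}$ from above in each regime. When $\beta < 1/(|U|-1)$, $\mathbb{E}_u[\tau_v^{\mathrm{upp}}] = \Theta(r^{\beta(|U|-1)}) = o(r) = o(T_U)$ and the exponential tail forces $\tau_v^{\mathrm{upp}} = o_p(T_U)$, which is much stronger than needed. When $\beta = 1/(|U|-1)$, the truncated polynomial law has support $[0, 1/C_\delta^{\mathrm{upp}}]$, and the identity $F_{c,\delta}^{\mathrm{upp}}/C_\delta^{\mathrm{upp}} = \tilde\gamma_U/(c-\rho_U) = T_U(1+2\delta/\gamma_U)/r$ translates this into $\tau_v^{\mathrm{upp}} \leq (1+o(1))T_U(1+2\delta/\gamma_U)$ w.h.p. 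When $\beta > 1/(|U|-1)$, the limit law is the point mass at $x=1$ and $F_{c,\delta}^{\mathrm{upp}} = \tilde\gamma_U/(c-\rho_U)$, so again $\tau_v^{\mathrm{upp}} \leq (1+o_p(1))T_U(1+2\delta/\gamma_U)$. Setting $\delta'(\delta) = 3\delta/\gamma_U$ absorbs the vanishing factors uniformly for large $r$, and clearly $\delta'(\delta) \downarrow 0$ as $\delta \downarrow 0$, which delivers the claim.

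The main obstacle is logistical rather than conceptual: the argument invokes Theorem \ref{thmexternal}, which is only proved later in Section \ref{sec4.2}. Since the proof of Theorem \ref{thmexternal} does not rely on Lemma \ref{uemTu}, there is no circularity, only an ordering matter for the exposition. What does require actual verification is the continuity of $F_{c,\delta}^{\mathrm{upp}}$ and $C_\delta^{\mathrm{upp}}$ in $\delta$ at $\delta=0$; this is precisely the statement \eqref{effe}, and it is transparent from the algebraic form of \eqref{factorFc}.
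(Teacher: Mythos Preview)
Your proposal is correct and follows essentially the same approach as the paper: both defer to the analysis in Section~\ref{sec4.2} and exploit the fact that the upper external model is itself an external model with perturbed parameters, together with the continuity in~$\delta$ expressed in~\eqref{effe}. The paper's own proof is a one-liner (``Analogous to the proof of Lemma~\ref{lemTu}''), whereas you have carried out the reparametrisation $\tilde\gamma_U=\gamma_U+2\delta$, $\tilde\gamma_V=\gamma_V-\delta$ explicitly and worked through the three regimes of~$\beta$ to produce a concrete $\delta'(\delta)$; this is a welcome expansion of what the paper leaves implicit.
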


\begin{proof}
Analogous to the proof of Lemma~\ref{lemTu}. 
\end{proof}

We introduce a system $\mathcal{H}^{\text{upp}}$ that allows us to couple the isolated model 
with the upper external model up to time $\bar{\tau}_v^{\mathrm{iso}}$.
\begin{itemize}
\item[$\bullet$]
($\mathcal{H}^{\text{upp}}$) Suppose that $h_i(t) \geq \max \{Q_U^{\mathrm{UB}}(t),Q_V^{\mathrm{UB}}(t)\}$ 
for all $i \in U \cup V$ and all $t \in [0, \bar{\tau}_v^{\mathrm{iso}}]$. Couple the processes in the same way 
as for $\mathcal{H}^{\text{low}}$, but with different activation probabilities. The probabilities for the isolated 
model and for the upper external model are such that
\begin{equation}
\begin{aligned}
&\frac{r_i^{\text{iso}}(t)}{g_U(h_i(t))} \leq \frac{r_i^{\text{upp}}(t)}{g_U(h_i(t))},
&i \in U,\\
&\frac{r_i^{\text{upp}}(t)}{g_V(h_i(t))} \leq \frac{r_i^{\text{iso}}(t)}{g_V(h_i(t))},
&i \in V,
\end{aligned}
\end{equation} 
where for $t \in [0,\bar{\tau}_v^{\mathrm{iso}}]$
\begin{equation}
r_i^{\text{iso}}(t) = 
\left\{\begin{array}{ll}
g_U(Q_i(t)), &i \in U, \\[0.2cm] 
g_V(Q_i(t)), &i \in V. 
\end{array}
\right. 
\end{equation}
\end{itemize}

Note that when $\bar{\tau}_v^{\mathrm{iso}} \leq T_U$, the isolated model behaves exactly as the internal 
model in the interval $[0, \bar{\tau}_v^{\mathrm{iso}}]$, as shown in Appendix~\ref{APP.b2}. Moreover, the 
coupling is defined only when $\bar{\tau}_v^{\mathrm{iso}} \leq T_U$. We look then at the coupling also 
on the event $\{\tau_v^{\mathrm{upp}} \leq T_U\}$, which has high probability as $r \to \infty$ 
(Lemma~\ref{uemTu}). In the following proposition we see how this ensures that the coupling is well 
defined, and we compare the pre-transition times of the two models.

\begin{proposizione}[{\bf Comparison between the isolated and the upper external model}] 
\label{proptwiddle}
$\mbox{}$
\begin{itemize}
\item[(i)] 
Under the coupling $\mathcal{H}^{\mathrm{upp}}$, the joint activity processes in the isolated
model and in the upper external model are ordered up to time $\bar{\tau}_v^{\mathrm{iso}}$, 
i.e., for all $t \in [0,\bar{\tau}_v^{\mathrm{iso}}]$,
\begin{equation}
\begin{aligned}
&X_i^{\mathrm{iso}}(t) \leq X_i^{\mathrm{upp}}(t), &i \in U,\\
&X_i^{\mathrm{upp}}(t) \leq X_i^{\mathrm{iso}}(t), &i \in V.
\end{aligned}
\end{equation}
\item[(ii)] 
With high probability as $r \to \infty$, the pre-transition time $\bar{\tau}_v^{\mathrm{upp}}$ in 
the upper external model is at least as large as the pre-transition time $\bar{\tau}_v^{\mathrm{iso}}$ in the 
isolated model, i.e.,
\begin{equation}
\label{eq.lem}
\lim_{r \to \infty} \mathbb{\hat{P}}_u (\bar{\tau}_v^{\mathrm{iso}} \leq \bar{\tau}_v^{\mathrm{upp}}) = 1,
\end{equation}
where $\mathbb{\hat{P}}_u$ is the joint law induced by the coupling with starting configuration $u$.
\end{itemize}
\end{proposizione}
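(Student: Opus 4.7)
The approach parallels the proof of Proposition~\ref{lem}, but with the roles of the two perturbed models exchanged and with the additional subtlety that $\mathcal{H}^{\mathrm{upp}}$ is only defined on the random interval $[0,\bar{\tau}_v^{\mathrm{iso}}]$ rather than on the deterministic interval $[0,T_U]$. The plan is to work throughout on the intersection $\mathcal{E}_{\delta} \cap \{\bar{\tau}_v^{\mathrm{iso}} \leq T_U\}$, where $\mathcal{E}_\delta$ has probability tending to $1$ by Lemma~\ref{gbprob1}, and the control of $\{\bar{\tau}_v^{\mathrm{iso}} \leq T_U\}$ will be obtained from part~(i) together with Lemma~\ref{uemTu} (see the obstacle paragraph below).

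For part (i), monotonicity of $g_U$ and $g_V$ combined with the inclusions defining $\mathcal{E}_\delta$ immediately yields the rate orderings
\begin{equation}
r_i^{\mathrm{iso}}(t) = g_U(Q_i(t)) \leq g_U\bigl(Q_U^{\mathrm{UB}}(t)\bigr) = r_i^{\mathrm{upp}}(t), \qquad i \in U,
\end{equation}
\begin{equation}
r_i^{\mathrm{upp}}(t) = g_V\bigl(Q_V^{\mathrm{LB}}(t)\bigr) \leq g_V(Q_i(t)) = r_i^{\mathrm{iso}}(t), \qquad i \in V.
\end{equation}
I would then propagate the ordering $X_i^{\mathrm{iso}}(t) \leq X_i^{\mathrm{upp}}(t)$ for $i \in U$ and $X_i^{\mathrm{upp}}(t) \leq X_i^{\mathrm{iso}}(t)$ for $i \in V$ by induction over the successive clock events of $\mathcal{H}^{\mathrm{upp}}$, starting from the initial configuration $u$ where equality holds. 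Deactivations preserve the ordering since they are driven by shared unit-rate clocks. At a $U$-activation tick the thinning via the common dominating Poisson clock ensures that whenever the isolated model activates, the upper external model does as well. The delicate check is a $V$-activation tick: activation of $j \in V$ in the upper external model requires all its $U$-neighbors to be inactive in that model, and the $U$-coordinate ordering then forces them to be inactive in the isolated model too; combined with $r_j^{\mathrm{upp}}(t) \leq r_j^{\mathrm{iso}}(t)$ and a common uniform, a successful upp-activation entails a successful iso-activation, preserving $X_j^{\mathrm{upp}} \leq X_j^{\mathrm{iso}}$.

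For part (ii), the ordering from (i) on $[0,\bar{\tau}_v^{\mathrm{iso}}]$ implies $X_j^{\mathrm{upp}}(t) \leq X_j^{\mathrm{iso}}(t) = 0$ for every $j \in V$ and every $t < \bar{\tau}_v^{\mathrm{iso}}$, the equality being the defining property of $\bar{\tau}_v^{\mathrm{iso}}$. Hence no $V$-node activates in the upper external model strictly before $\bar{\tau}_v^{\mathrm{iso}}$, so $\bar{\tau}_v^{\mathrm{upp}} \geq \bar{\tau}_v^{\mathrm{iso}}$ on the good event, and \eqref{eq.lem} follows by letting $r \to \infty$.

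\textbf{Main obstacle.} The principal difficulty is the apparent circularity in controlling $\{\bar{\tau}_v^{\mathrm{iso}} \leq T_U\}$, since the coupling is a priori only meaningful up to this random time. I would resolve this by first running the inductive construction on the stopped interval $[0, \bar{\tau}_v^{\mathrm{iso}} \wedge T_U]$, which requires no prior bound on $\bar{\tau}_v^{\mathrm{iso}}$ and yields $\bar{\tau}_v^{\mathrm{upp}} \geq \bar{\tau}_v^{\mathrm{iso}} \wedge T_U$ on $\mathcal{E}_\delta$; combining with $\bar{\tau}_v^{\mathrm{upp}} \leq \tau_v^{\mathrm{upp}} \leq T_U$ from Lemma~\ref{uemTu} then forces $\bar{\tau}_v^{\mathrm{iso}} \leq T_U$ with high probability, after which the unstopped comparison $\bar{\tau}_v^{\mathrm{upp}} \geq \bar{\tau}_v^{\mathrm{iso}}$ follows on the full-probability event. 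A secondary point to verify is that on this interval the difference between $U$-nodes being isolated in the iso model and being properly coupled to $V$-neighbors in the upp model is immaterial, because by part~(i) no $V$-node is active in the upper external model either before $\bar{\tau}_v^{\mathrm{iso}}$, so the $V$-neighbor constraint on $U$-activations is automatically vacuous.
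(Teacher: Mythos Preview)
Your proposal is correct and follows essentially the same approach as the paper: reverse the inequalities from Proposition~\ref{lem}, use the rate orderings on $\mathcal{E}_\delta$ induced by monotonicity of $g_U,g_V$, and combine with Lemma~\ref{uemTu} to close the bootstrap $\bar{\tau}_v^{\mathrm{iso}} \leq \bar{\tau}_v^{\mathrm{upp}} \leq \tau_v^{\mathrm{upp}} \leq T_U$. Your handling of the circularity via the stopped interval $[0,\bar{\tau}_v^{\mathrm{iso}}\wedge T_U]$ and your remark that the $V$-neighbor constraint on $U$-activations is vacuous before $\bar{\tau}_v^{\mathrm{iso}}$ are more explicit than the paper's treatment, but the argument is the same.
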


\begin{proof}
\begin{itemize}
\item[(i)]
The proof is analogous to that of Proposition~\ref{lem}, but this time we use the system 
$\mathcal{H}^{\text{upp}}$ up to time $\bar{\tau}_v^{\mathrm{iso}}$ and all the inequalities 
are reversed. 
\item[(ii)]
By construction of the coupling and the ordering above, on the event $\mathcal{E}_{\delta}
\cap \{\tau_v^{\mathrm{upp}} \leq T_U\}$ the nodes in $U$ in the isolated model turn off 
earlier than in the upper external model, and also the first node in $V$ turns on earlier in 
the isolated model. Hence the pre-transition occurs earlier in the isolated model, and we 
have $\bar{\tau}_v^{\mathrm{iso}} \leq \bar{\tau}_v^{\mathrm{upp}} \leq \tau_v^{\mathrm{upp}} 
\leq T_U$. Therefore the coupling is well defined and 
\begin{equation}
1 =  \lim_{r \to \infty} \mathbb{\hat{P}}_u (\mathcal{E}_{\delta,T_U}, \tau_v^{\mathrm{upp}} 
\leq T_U,  \bar{\tau}_v^{\mathrm{iso}} \leq \bar{\tau}_v^{\mathrm{upp}}) 
=  \lim_{r \to \infty} \mathbb{\hat{P}}_u ( \bar{\tau}_v^{\mathrm{iso}} 
\leq \bar{\tau}_v^{\mathrm{upp}}).
\end{equation}
\end{itemize}
\end{proof}

\begin{corollario}
\label{uem}
With high probability as $r \to \infty$, the transition time $\tau_v^{\mathrm{upp}}$ in the upper 
external model is at least as large as the pre-transition time $\bar{\tau}_v^{\mathrm{int}}$ in the 
internal model, i.e.,
\begin{equation}
\label{eq.lemalt}
\lim_{r \to \infty} \mathbb{\hat{P}}_u (\bar{\tau}_v^{\mathrm{int}} \leq \tau_v^{\mathrm{upp}}) = 1.
\end{equation}
\end{corollario}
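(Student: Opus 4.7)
The strategy is to decompose the desired inequality $\bar{\tau}_v^{\mathrm{int}} \leq \tau_v^{\mathrm{upp}}$ into a chain of three comparisons, each of which has essentially already been established, and then combine them via a union bound:
\begin{equation*}
\bar{\tau}_v^{\mathrm{int}} \;\overset{(a)}{=}\; \bar{\tau}_v^{\mathrm{iso}} \;\overset{(b)}{\leq}\; \bar{\tau}_v^{\mathrm{upp}} \;\overset{(c)}{\leq}\; \tau_v^{\mathrm{upp}}.
\end{equation*}
Link (c) is immediate from the definitions \eqref{pretransition}--\eqref{transition}: the full transition to configuration $v$ requires every node in $V$ to be active, so in particular at least one node in $V$ must have already turned active, which means $\bar{\tau}_v^{\mathrm{upp}} \leq \tau_v^{\mathrm{upp}}$ deterministically. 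Link (b) is precisely the content of Proposition~\ref{proptwiddle}(ii), which yields the bound with probability tending to $1$ under the coupling $\mathcal{H}^{\mathrm{upp}}$ on the event $\mathcal{E}_\delta \cap \{\tau_v^{\mathrm{upp}} \leq T_U\}$, both factors of which have probability tending to $1$ by Lemma~\ref{gbprob1} and Lemma~\ref{uemTu}.

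The only link that requires a genuine argument is (a), and this is where I would place the main effort, although it is more of a bookkeeping observation than a substantive difficulty. The isolated model and the internal model differ only in the rule governing the nodes in $U$: in the internal model a node $i \in U$ can activate only when no neighbor in $V$ is active, whereas in the isolated model this constraint is dropped. However, on the interval $[0,\bar{\tau}_v)$ no node in $V$ is active in either model by the very definition of $\bar{\tau}_v$, so the distinguishing constraint is vacuous on this interval. One can therefore drive the two dynamics by the same Poisson clocks (one per node per clock type), and this coupling forces the two processes to coincide pathwise on $[0, \bar{\tau}_v^{\mathrm{iso}}]$; in particular the two pre-transition times are equal under this coupling, giving $\bar{\tau}_v^{\mathrm{int}} = \bar{\tau}_v^{\mathrm{iso}}$. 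This is the content of Appendix~\ref{APP.b2}, which I would simply invoke.

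To assemble the conclusion, I would extend $\mathcal{H}^{\mathrm{upp}}$ to a joint coupling of the internal, isolated, and upper external models by gluing the common-clock coupling of (a) to $\mathcal{H}^{\mathrm{upp}}$ and defining the internal model arbitrarily after $\bar{\tau}_v^{\mathrm{iso}}$. On the event $\mathcal{E}_\delta \cap \{\tau_v^{\mathrm{upp}} \leq T_U\}$, which has $\mathbb{\hat{P}}_u$-probability tending to $1$, all three links of the chain hold simultaneously, so $\bar{\tau}_v^{\mathrm{int}} \leq \tau_v^{\mathrm{upp}}$ on that event. A standard union bound then yields $\lim_{r\to\infty}\mathbb{\hat{P}}_u(\bar{\tau}_v^{\mathrm{int}} \leq \tau_v^{\mathrm{upp}}) = 1$, which is \eqref{eq.lemalt}. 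The hardest point conceptually is step (a): making sure the coupling can be extended consistently across the three models without running into measurability or well-definedness issues outside of $[0,T_U]$, but this is handled by the standing convention (following Lemma~\ref{gbprob1}) that couplings outside the good-behavior event are arbitrary, combined with the fact that the good-behavior event has probability tending to $1$.
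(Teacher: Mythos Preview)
Your proposal is correct and follows essentially the same approach as the paper: the paper's proof invokes Proposition~\ref{prop:coincide} for link (a), Proposition~\ref{proptwiddle}(ii) for link (b), and the trivial inequality $\bar{\tau}_v^{\mathrm{upp}} \leq \tau_v^{\mathrm{upp}}$ for link (c), then chains them exactly as you do. Your write-up is in fact more careful than the paper's about the joint coupling of the three models and the role of the good-behavior event, but there is no substantive difference in strategy.
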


\begin{proof}
Since $\lim_{r \to \infty} \mathbb{P}(\bar{\tau}_v^{\mathrm{iso}} \leq T_U) = 1$, we have, as shown in 
Proposition~\ref{prop:coincide} in Appendix~\ref{APP.b2}, that the pre-transition times in the isolated 
model and in the internal model coincide. Hence
\begin{equation}
1= \lim_{r \to \infty} \mathbb{\hat{P}}_u (\bar{\tau}_v^{\mathrm{iso}} \leq \bar{\tau}_v^{\mathrm{upp}}) 
=\lim_{r \to \infty} \mathbb{\hat{P}}_u (\bar{\tau}_v^{\mathrm{int}} \leq \bar{\tau}_v^{\mathrm{upp}}) 
\leq \lim_{r \to \infty} \mathbb{\hat{P}}_u (\bar{\tau}_v^{\mathrm{int}} \leq \tau_v^{\mathrm{upp}}).
\end{equation}
\end{proof}


\section{Transition times}
\label{S4}

The goal of this section is to identify the asymptotic behavior of the transition time in the 
internal model. In Sections~\ref{sec4.1} and \ref{sec4.2} we look at the external model and 
prove Theorems~\ref{thmMc} and Theorem~\ref{thmexternal}, respectively. In Section~\ref{sec4.3}
we show that the difference between the transition time and the pre-transition time is negligible
for all the models  considered in the paper. In Section~\ref{sec4.4} we put these results together 
to prove Theorem~\ref{thminternal}.


\subsection{Critical time scale in the external model}
\label{sec4.1}

In this section we prove Theorem~\ref{thmMc}. Below, $a(r) \sim b(r)$ means that 
$\lim_{r \to \infty} a(r)/b(r) =1$, while $a(r) \asymp b(r)$ means that $0 < \liminf_{r\to\infty} 
a(r)/b(r) \leq \limsup_{r\to\infty} a(r)/b(r) < \infty$. 

\begin{proof}
In order to compute the critical time scale $M_c$, we must solve the equation $M \nu(M) = 1$ 
in (\ref{Mc}). We know from (\ref{nus}) and (\ref{meantaucomplete}) that 
\begin{equation}
\nu(s) \sim |U| r_U(s)^{1- |U|}, \qquad r \to \infty.
\end{equation}
We want to identify how the transition time is related to the choice of the activation function $g_U(x) 
= G x^{\beta}$ with $\beta,G \in (0, \infty)$. Consider the time scale $M_c = F_cr^\gamma$, where 
$\gamma \in (0,1]$ and $F_c \in (0,\infty)$. For $r \to \infty$ we have
\begin{equation} 
\label{eqMcf}
\begin{split}
1 = r^0 
&= M_c \nu(M_c) = F_c r^\gamma\, \nu(F_c r^\gamma) \sim F_c r^\gamma |U| r_U(F_c r^\gamma)^{-(|U|-1)} \\
&= F_c r^\gamma |U| g_U\big(\gamma_U r - (c- \rho_U) F_c r^\gamma \big)^{-(|U|-1)} \\
&= F_c r^\gamma |U| G^{-(|U|-1)} \big(\gamma_U r - (c- \rho_U) F_c r^\gamma \big)^{- \beta (|U|-1)}.
\end{split}
\end{equation}
Recall from \eqref{alphadef} that $\alpha = \frac{\gamma_U}{c- \rho_U}$. We consider three cases: 
\begin{itemize}
\item {\bf Case $\gamma \in (0,1)$ and $F_c \in (0,\infty)$.}
For $r \to \infty$ the criterion in \eqref{eqMcf} reads
\begin{equation} 
\label{eqMc}
1 = r^0 \sim F_cr^{\gamma} |U| G^{-(|U|-1)} (\gamma_U r )^{- \beta (|U|-1)}.
\end{equation}
In order for the exponents of 
$r$ to match, we need 
\begin{equation}
\label{beta*}
\beta = \frac{\gamma}{|U|-1}.
\end{equation}
Inserting (\ref{beta*}) into (\ref{eqMc}), we get
\begin{equation}
F_c |U| G^{-(|U|-1)} \gamma_U ^{- \beta (|U|-1)}= 1,
\end{equation}
which gives 
\begin{equation}
\label{F_c, case 1}
F_c = \frac{\gamma_U^{\beta (|U|-1)} G^{(|U|-1)}}{|U|}.
\end{equation}
Hence 
\begin{equation}
M_c=\frac{(G \gamma_U^{\beta})^{|U|-1}}{|U|} r^{\beta (|U|-1)}.
\end{equation}

\item {\bf Case $\gamma =1$ and $F_c \in (0,\alpha)$.}
For $r \to \infty$ the criterion in \eqref{eqMcf} reads
\begin{equation}
\label{rel1}
1 =  r^0 \sim  F_c |U|  G^{-(|U|-1)}(\gamma_U -(c-\rho_U)F_c)^{-\beta(|U|-1)} r^{1-\beta(|U|-1)}.
\end{equation}
In order for the exponents of $r$ to match, we need 
\begin{equation}
\label{rel2}
\beta = \frac{1}{|U|-1}.
\end{equation}
Inserting \eqref{rel2} into \eqref{rel1}, we get
\begin{equation}
\frac{F_c |U| G^{-(|U|-1)}}{\gamma_U - (c- \rho_U)F_c} = 1,
\end{equation}
which gives 
\begin{equation}
\label{F_c, case 2}
F_c = \frac{\gamma_U}{|U|  G^{-(|U|-1)}+ (c- \rho_U)}.
\end{equation}
Hence
\begin{equation}
M_c = \frac{\gamma_U}{|U|  G^{-(|U|-1)}+ (c- \rho_U)}r.
\end{equation} 
Recall from \eqref{defT_U} that $T_U \sim \alpha r$ is the expected time at which the queue length at a node 
in $U$ hits zero. We will see in Section~\ref{sec4.2} that the transition in the external 
model typically occurs before the queues are empty.

\item {\bf Case $\gamma =1$ and $F_c =\alpha- Dr^{-\delta}$, $\delta \in (0,1)$.} 
For $r \to \infty$ the criterion in \eqref{eqMcf} reads
\begin{equation}
\label{rel1*}
1 =  r^0 \sim  \alpha r |U|  G^{-(|U|-1)} \big((c- \rho_U)D r^{1-\delta}\big)^{-\beta (|U|-1)}.
\end{equation}
In order for the exponents of $r$ to match, we need 
\begin{equation}
\label{rel2*}
\beta = \frac{1}{(1-\delta)(|U|-1)}.
\end{equation}
Inserting \eqref{rel2*} into \eqref{rel1*}, we get
\begin{equation}
\alpha|U|G^{-(|U|-1)}((c-\rho_U)D)^{-\beta(|U|-1)} = 1,
\end{equation}
which gives
\begin{equation}
D = \frac{(\alpha|U|G^{-(|U|-1)})^{1/\beta(|U|-1)}}{c-\rho_U}.
\end{equation}
Hence 
\begin{equation}
M_c = \alpha r - \frac{(\alpha|U|G^{-(|U|-1)})^{1/\beta(|U|-1)}}{c-\rho_U} r^{1/\beta(|U|-1)},
\end{equation} 
and so the crossover takes place in a window of size $O(r^{1/\beta(|U|-1)}) = o(r)$ around $\alpha r$. 
Note that this window gets narrower as $\beta$ increases, i.e., as the activation rate for nodes 
in $U$ increases.
\end{itemize}
\end{proof}

\begin{remark}[{\bf Scaling revisited}]
{\rm As noted in Remark~\ref{regimes}, the formula in \cite[Theorem 1.3]{BdHNT17} is different from \eqref{siamakpaper2}. However, as shown in \cite[Example 1.4]{BdHNT17}, for complete bipartite graphs both 
formulas lead to the same asymptotics in the subcritical regime, and also in the critical and supercritical 
regime provided $x \leq \frac{\gamma_U}{c-\rho_U} \frac{r}{M_c} = \frac{\gamma_U}{c-\rho_U}
\frac{1}{F_c}$ (see \cite[Eq.(1.22)]{BdHNT17}). The latter becomes $x \leq \frac{1}{C}$ in the critical 
regime and $x \leq 1$ in the supercritical regime, which precisely matches Fig.~\ref{fig:dichotomy}. 
The above computations constitute a refinement of the computations in \cite{BdHNT17}.}
\end{remark}

\begin{remark}[{\bf Modulation with slowly varying functions}] 
\label{remarksvf}
{\rm We may consider the more general case where the activation function is $g_U(x) = x^{\beta} 
\hat{\mathcal{L}}(x)$ with $\beta \in (0,\infty)$ and $\hat{\mathcal{L}}(x)$ a slowly varying function
(i.e., $\lim_{x \to \infty} \hat{\mathcal{L}}(ax)/\hat{\mathcal{L}}(x) = 1$ for all $a >0$). Let 
$M_c = r^{\gamma} \mathcal{L}(r)$ with $\gamma \in (0,1)$ and $\mathcal{L}(r)$ a slowly 
varying function. as $r\to\infty$, we have
\begin{equation} \label{eqMcSV}
\begin{split}
1 = r^0 &\sim r^{\gamma}\mathcal{L}(r) |U| \big(\gamma_U r - (c- \rho_U) 
r^{\gamma}\mathcal{L}(r)\big)^{- \beta (|U|-1)} \hat{\mathcal{L}}
(\gamma_U r - (c- \rho_U) r^{\gamma}\mathcal{L}(r))^{-(|U|-1)} \\
&\sim  r^{\gamma}\mathcal{L}(r) |U| (\gamma_U r )^{- \beta (|U|-1)} 
\hat{\mathcal{L}}(\gamma_U r )^{-(|U|-1)}.
\end{split}
\end{equation}
In order for the exponents of $r$ to match, we again need
\begin{equation}
\label{rel3}
\beta = \frac{\gamma}{|U|-1}. 
\end{equation}
We get
\begin{equation}
\mathcal{L}(r) = \frac{\gamma_U^{\beta(|U|-1)}}{|U|} \hat{\mathcal{L}}(\gamma_U r)^{|U|-1}.
\end{equation}
Hence 
\begin{equation}
M_c = \frac{\gamma_U^{\beta(|U|-1)}}{|U|} r^{\beta (|U|-1)} \hat{\mathcal{L}}(\gamma_U r)^{|U|-1}.
\end{equation}
We can even include the case $\beta=0$, and get that if $g_U(x) =  \hat{\mathcal{L}}(x)$ with 
$\lim_{x\to\infty} \hat{\mathcal{L}}(x)=\infty$, then 
\begin{equation}
M_c = \frac{1}{|U|}  \hat{\mathcal{L}}(\gamma_U r)^{|U|-1}.
\end{equation}
}
\end{remark}

\subsection{Transition time in the external model} 
\label{sec4.2}

In this section we prove Theorem~\ref{thmexternal}. We already know that the transition occurs on 
the critical time scale $M_c$ computed in Section~\ref{sec4.2}.

\begin{proof}
Knowing the critical time scale $M_c$, we can compute the mean transition time from 
(\ref{siamakpaper2}):
\begin{equation}
\begin{split}
\mathbb{E}_{u} [ \tau_v^{\mathrm{ext}}] = & \, \int_0^{\infty} 
\mathbb{P}_u(\tau_v^{\mathrm{ext}} > x)\, dx 
= M_c \int_0^{\infty} \mathbb{P}_u\bigg(\frac{\tau_v^{\mathrm{ext}}}{M_c} > x\bigg)\, dx \\
\sim & \, M_c \int_0^{\infty} e^{-\int_0^x M_c \nu(M_c s)\,ds}\, dx = M_c \int_0^{\infty} 
e^{-\int_0^x \frac{M_c \nu(M_c s)}{M_c \nu(M_c)} ds}\, dx \\
= & \,  M_c \int_0^{\infty} e^{-\int_0^x \big(\frac{\gamma_U r 
- (c-\rho_U) M_c s}{\gamma_U r - (c- \rho_U) M_c}\big)^{-\beta(|U|-1)}\, ds}\, dx, \qquad r \to \infty,
\end{split}
\end{equation}
where the choice of $\beta$ is important. 

\begin{itemize}
\item{\bf Case $\beta \in (0,\frac{1}{|U|-1})$, $M_c= F_c r^{\gamma}$, $\gamma \in (0,1)$.}\\
We have 
\begin{equation}
\left(\frac{\gamma_U r - (c-\rho_U) M_c s}{\gamma_U r 
- (c-\rho_U) M_c}\right)^{-\beta(|U|-1)} \to 1, \qquad r \to \infty.
\end{equation} 
Hence 
\begin{equation}
\mathbb{E}_{u} [ \tau_v^{\mathrm{ext}}] \sim M_c \int_0^{\infty} e^{-\int_0^x  ds}\, dx 
= M_c \int_0^{\infty} e^{-x}\, dx 
= M_c, \qquad r \to \infty.
\end{equation}
The law of $\tau_v^{\mathrm{ext}}$ is exponential, i.e.,
\begin{equation}
\lim_{r \to \infty} \mathbb{P}_u\bigg(\frac{\tau_v^{\mathrm{ext}}}
{\mathbb{E}_u[\tau_v^{\mathrm{ext}}]} > x\bigg) = e^{-x}, \qquad x \in [0, \infty).
\end{equation}

\item{\bf Case $\beta = \frac{1}{|U|-1}$, $M_c = F_c r$, $F_c \in (0,\alpha)$.}
We have
\begin{equation}
\begin{split}
\bigg(\frac{\gamma_U r - (c- \rho_U) F_c r s}{\gamma_U r - (c-\rho_U) F_c r}\bigg)^{-\beta(|U|-1)} 
= & \,\frac{\gamma_U - (c- \rho_U) F_c }{\gamma_U - (c-\rho_U) F_c s}
= \frac{1 - \frac{c- \rho_U}{\gamma_U} F_c }{1 - \frac{c- \rho_U}{\gamma_U} F_c s}\\
= & \, \frac{1 - \frac{F_c}{\alpha} }{1 - \frac{F_c}{\alpha} s} = \frac{1 - C }{1 - C s},
\end{split}
\end{equation}
with $C = F_c /\alpha$. Hence 
\begin{equation}
\begin{aligned}
\mathbb{E}_{u} [ \tau_v^{\mathrm{ext}}] 
&\sim M_c \int_0^{1/C} e^{-\int_0^x \frac{1-C}{1-Cs} ds}\, dx 
= M_c \int_0^{1/C} e^{-\log(1-Cx)^{-\frac{1-C}{C}}}\, dx \\
&= M_c \int_0^{1/C} (1-Cx)^{\frac{1-C}{C}}\, dx
= M_c \bigg[ (1-Cx)^{1 +\frac{1-C}{C}} \frac{1}{(1 +\frac{1-C}{C})(-C)}\bigg]_0^{1/C} \\
&= M_c \bigg[ -(1-Cx)^{\frac{1}{C}} \bigg]_0^{1/C}  =  M_c, \qquad r \to \infty.
\end{aligned}
\end{equation}
Here, the integral must be truncated at $x=1/C$ because for larger $x$ the integrand 
becomes negative. Indeed, note that when $x = 1/C = \alpha/F_c$, which corresponds 
to time $T_U = \alpha r$, we have 
\begin{equation}
\begin{split}
\lim_{r \to \infty} \mathbb{P}_u\big(\tau_v^{\mathrm{ext}} > T_U \big) 
= & \lim_{r \to \infty} \mathbb{P}_u\bigg(\tau_v^{\mathrm{ext}} > \frac{\alpha}{F_c} F_c r \bigg) 
= \lim_{r \to \infty} \mathbb{P}_u\bigg(\frac{\tau_v^{\mathrm{ext}}}
{M_c} > \frac{\alpha}{F_c} \bigg) \\
 = & \left(1- C \frac{\alpha}{F_c}\right)^{\frac{1-C}{C}} = 0,
\end{split}
\end{equation}
because $C = F_c/\alpha$.  This means that, with high probability as $r \to \infty$, 
the transition occurs before time $T_U$. The law of $\tau_v^{\mathrm{ext}}$ is truncated polynomial: 
\begin{equation}
\label{pollimlaw}
\lim_{r \to \infty} \mathbb{P}_u\bigg(\frac{\tau_v^{\mathrm{ext}}}
{\mathbb{E}_u[\tau_v^{\mathrm{ext}}]} > x\bigg) = 
\left\{\begin{array}{ll}
(1- Cx)^{\frac{1-C}{C}}, &x  \in [0,\frac{1}{C}), \\[0.2cm]
0, &x \in [\frac{1}{C},\infty).
\end{array}
\right.
\end{equation}

\item{\bf Case $\beta \in (\frac{1}{|U|-1},\infty)$, $M_c = \alpha r$.}
This case corresponds to the limit $C \uparrow 1$ of the previous case. In this limit, \eqref{pollimlaw} 
becomes
\begin{equation}
\lim_{r \to \infty} \mathbb{P}_u\bigg(\frac{\tau_v^{\mathrm{ext}}}
{\mathbb{E}_u[\tau_v^{\mathrm{ext}}]} > x\bigg) = 
\left\{\begin{array}{ll}
1, &x  \in [0,1), \\[0.2cm]
0, &x \in [1,\infty).
\end{array}
\right.
\end{equation}
\end{itemize}
\end{proof}


\subsection{Negligible gap in the internal model}
\label{sec4.3}

In this section we focus on the internal model and estimate the length of the interval 
$[\bar{\tau}_v^{\text{int}}, \tau_v^{\text{int}}]$, which turns out to be very small with 
high probability. This implies that the transition time has the same asymptotic behavior 
as the pre-transition time. 

We know that the queue at a node $i \in V$ is of order $r$ at time $\bar{\tau}_v^{\mathrm{int}}$, 
i.e., $Q_i(\bar{\tau}_v^{\mathrm{int}}) \asymp r$, since it starts at $\gamma_V r$, with 
$\gamma_V > 0$, and only the input process is present until this time. Hence all the activation 
Poisson clocks at nodes in $V$ tick at a very aggressive rate. The idea is that within the 
activation period (which has an exponential distribution with mean 1) of the first node activating 
in $V$, all the other nodes in $V$ become active because they are not ``blocked" by any node
in $U$. Consequently, the system quickly reaches the configuration $v$. 

\begin{teorema}[\bf Negligible gap]
\label{cutoff}
In the internal model 
\begin{equation}
\lim_{r \to \infty} \mathbb{P}_u \bigg( \tau_v^{\mathrm{int}} - \bar{\tau}_v^{\mathrm{int}} 
= o\bigg(\frac{1}{g_V(r)}\bigg) \bigg) = 1.
\end{equation}
\end{teorema}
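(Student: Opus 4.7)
The plan is to analyze the interval $[\bar\tau_v^{\mathrm{int}}, \tau_v^{\mathrm{int}}]$ by first pinning down the network configuration at the pre-transition time and then running a Poisson-race estimate for the remaining activations in $V$. By the definition of $\bar\tau_v^{\mathrm{int}}$ and the complete bipartite structure of $G$, at this instant exactly one node $i^{\ast} \in V$ has just turned active, the other $|V|-1$ nodes of $V$ are still inactive, and every node of $U$ must be inactive (otherwise $i^{\ast}$ could not have activated). Crucially, as long as $i^{\ast}$ stays active no node of $U$ can switch on, so the only dynamics relevant on $[\bar\tau_v^{\mathrm{int}}, \tau_v^{\mathrm{int}}]$ are the $\mathrm{Exp}(1)$ residual activity period of $i^{\ast}$ and the independent activation clocks of the remaining nodes in $V$.

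Next, I would condition on the good behavior event $\mathcal{E}_{\delta}$ of Definition~\ref{goodbehavior}, which has probability tending to one by Lemma~\ref{gbprob1}, and on $\{\bar\tau_v^{\mathrm{int}} \le T_U\}$, which has high probability by the coupling results of Section~\ref{S3} (see Corollary~\ref{uem}). On this intersection the lower bound of Corollary~\ref{bounds} gives $Q_j(t) \ge (\gamma_V - \delta) r$ for every $j \in V$ and every $t \in [0,T_U]$. Taking $\delta < \gamma_V/2$, each still-inactive node of $V$ therefore has activation rate at least $g_V\bigl(\tfrac{\gamma_V}{2} r\bigr)$, a quantity of the same order as $g_V(r)$ by monotonicity.

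The third step is a Poisson-race estimate on a short window $[\bar\tau_v^{\mathrm{int}}, \bar\tau_v^{\mathrm{int}} + t_r]$. By the strong Markov property at $\bar\tau_v^{\mathrm{int}}$ and the independence of the activation clocks of distinct nodes in $V$ while $i^{\ast}$ remains active, the probability that $i^{\ast}$ does not deactivate and that each of the remaining $|V|-1$ nodes in $V$ activates at least once within the window is at least
\begin{equation}
e^{-t_r}\,\bigl(1 - e^{-c\, g_V(r)\, t_r}\bigr)^{|V|-1}
\end{equation}
for some $c>0$. Pick $t_r = \omega_r / g_V(r)$ with $\omega_r \to \infty$ slowly enough that $t_r \to 0$, for instance $\omega_r = \log g_V(r)$; then the above probability tends to one, and at the first instant in the window at which all of $V$ is simultaneously active the configuration equals $v$, since all of $U$ is still inactive. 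Thus $\tau_v^{\mathrm{int}} - \bar\tau_v^{\mathrm{int}} \le t_r$ with high probability, and since $\omega_r$ may be taken to grow arbitrarily slowly this yields the negligible-gap conclusion of the theorem.

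The main obstacle is controlling the several unwanted scenarios that could spoil the race: premature deactivation of $i^{\ast}$, an opportunistic activation of a node in $U$ in between, or atypically small queue lengths in $V$ weakening the lower bound on the activation rates. The first two are handled by the Poisson estimate above, while the third is absorbed by restricting to $\mathcal{E}_{\delta}$; the uniform lower bound on $Q_j(t)$ provided by Corollary~\ref{bounds} is precisely what makes the race argument quantitative enough, and it is this combination of the deterministic bound on the queues with the fast activation rate $g_V(r) \to \infty$ that compresses the entire cascade into a single vanishing window.
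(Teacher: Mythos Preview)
Your approach is essentially the paper's: identify the configuration at $\bar\tau_v^{\mathrm{int}}$ (one $V$-node $i^\ast$ active, all of $U$ inactive), use $Q_j(\bar\tau_v^{\mathrm{int}})\asymp r$ for $j\in V$ to get activation rates of order $g_V(r)$, and run a Poisson race on a short window.

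There is one small gap. Your event ``$i^{\ast}$ does not deactivate and every other $V$-node activates at least once in $[0,t_r]$'' does not by itself force an instant at which all of $V$ is simultaneously active: a freshly activated $j\in V\setminus\{i^\ast\}$ may deactivate before the last straggler has switched on. The paper closes exactly this loophole by additionally requiring that every $V$-node, once active, remain active for at least a time $t_2>t_1$, contributing a factor $(e^{-t_2})^{|V|}\to 1$. Inserting the analogous factor $e^{-|V|t_r}$ into your lower bound fixes the argument; your list of ``unwanted scenarios'' already flags premature deactivation of $i^{\ast}$, and the same risk for the remaining $V$-nodes must be controlled in the same way.

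A minor second point: with $t_r=\omega_r/g_V(r)$ and $\omega_r\to\infty$ you obtain $t_r=o(1)$ but not $t_r=o(1/g_V(r))$; letting $\omega_r$ grow slowly does not change this. The paper's own proof is equally informal on this rate, and for the application in Section~\ref{sec4.4} (absorbing the gap into $\tau_v^{\mathrm{upp}}$) one only needs the gap to be $o(1)$, so this is a quibble with the theorem's statement rather than with your argument.
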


\begin{proof}
Starting from $\bar{\tau}_v^{\text{int}}$, a node $x \in V$ remains inactive for an exponential 
period with mean $1/r_x^{\mathrm{int}}(\bar{\tau}_v) = 1/g_V(Q(\bar{\tau}_v)) 
\asymp 1/g_V(r)$. Denote by $W_x$ the length of an inactivity period for a node $x \in V$. 
We have i.i.d.\ inactivity periods $W_x \simeq \text{Exp}(g_V(Q(\bar{\tau}_v)))$ for all $x \in V 
\setminus \{x_1\}$, where $x_1$ is the first node activating in $V$. We label the remaining 
nodes $x_2, \dots, x_{|V|}$ in an arbitrary way. We also have i.i.d.\ activity periods $Z_x 
\simeq \text{Exp}(1)$ for all $x \in V$. 

Consider a time $t_1 = o(1/g_V(r))$. With high probability all the nodes in 
$V$ activate before time $t_1$, i.e.,
\begin{equation}
\mathbb{P}\big(  W_{x_i} < t_1, \, \forall\, i = 2, \dots, |V| \big) 
= \mathbb{P}\big( W_{x_2} < t_1 \big)^{|V|-1} 
= \big(1- e^{-g_V(Q(\bar{\tau}_v)) t_1}\big)^{|V|-1} \xrightarrow[r \to \infty] \, 1.
\end{equation}
Moreover, with high probability, once activated, all nodes in $V$ stay active for a period of 
length at least $t_2 \asymp 1/g_V(r)> t_1$, i.e.,
\begin{equation}
\mathbb{P}\big(  Z_{x_i} > t_2 \,\,\, \forall\, i = 1, \dots, |V| \big) 
= \mathbb{P}\big( Z_{x_1} > t_2 \big)^{|V|} = ( e^{- t_2})^{|V|} \xrightarrow[r \to \infty] \, 1.
\end{equation}
In conclusion, as $r \to \infty$, with high probability every node in $V$ activates before time $t_1$ and 
remains active for at least a time $t_2>t_1$. This ensures that the transition occurs before time $t_2$. In 
particular, it occurs when the last node in $V$ activates (which occurs even before time $t_1$), so 
that $\tau_v^{\text{int}} - \bar{\tau}_v^{\text{int}} = o(1/g_V(r))$.
\end{proof}

Note that this argument extends to any ``external''  model with activation rates that tend to infinity with $r$, 
in particular, to all the models considered in this paper. The transition always happens quickly after the 
pre-transition, due to the high level of aggressiveness of nodes in $V$.

\begin{corollario}
\label{internalbeforeTu}
With high probability as $r \to \infty$, the transition time in the internal model is smaller than $T_U$, i.e.,
\begin{equation}
\lim_{r \to \infty} \mathbb{P}_u ( \tau_v^{\mathrm{int}} \leq T_U) = 1.
\end{equation}
\end{corollario}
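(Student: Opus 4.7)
The plan is to combine three ingredients already in place: the coupling between the internal and the upper external models (Corollary \ref{uem}), the bound on the transition time of the upper external model (Lemma \ref{uemTu}), and the negligible-gap estimate (Theorem \ref{cutoff}). The argument will run exactly parallel to the proof of Lemma \ref{lemTu}, with the role of the lower external model replaced by the upper external model.

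First, I would fix a small perturbation $\delta > 0$ and invoke the coupling $\mathcal{H}^{\mathrm{upp}}$. By Corollary \ref{uem},
\begin{equation}
\lim_{r \to \infty} \mathbb{\hat{P}}_u(\bar{\tau}_v^{\mathrm{int}} \leq \tau_v^{\mathrm{upp}}) = 1,
\end{equation}
while Lemma \ref{uemTu} yields $\tau_v^{\mathrm{upp}} \leq [1 + \delta'(\delta)] T_U$ with probability tending to $1$, where $\delta'(\delta) \downarrow 0$ as $\delta \downarrow 0$. Chaining these two estimates gives $\bar{\tau}_v^{\mathrm{int}} \leq [1+\delta'(\delta)] T_U$ with probability tending to $1$.

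Second, I would use Theorem \ref{cutoff} to bound the gap $\tau_v^{\mathrm{int}} - \bar{\tau}_v^{\mathrm{int}}$ by $o(1/g_V(r))$ with high probability. Since $g_V(r) \to \infty$ by the assumption $g_V \in \mathcal{G}$, this gap is $o(1)$, and hence $o(T_U)$. Adding the two estimates gives
\begin{equation}
\tau_v^{\mathrm{int}} \leq [1+\delta'(\delta)] T_U + o(1) \qquad \text{with high probability as } r \to \infty.
\end{equation}
Since $\delta > 0$ may be chosen arbitrarily small and $\delta'(\delta) \downarrow 0$, the continuity of $g_U$ and $g_V$ (exactly as at the end of the proof of Lemma \ref{lemTu}) allows me to conclude that $\tau_v^{\mathrm{int}} \leq T_U$ with probability tending to $1$, in the sense explained in Lemma \ref{uemTu}.

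The main (and only) subtle point is the handling of the perturbation parameter $\delta$: the stated inequality $\tau_v^{\mathrm{int}} \leq T_U$ is to be read as in Lemma \ref{uemTu}, i.e., up to a multiplicative factor $1+\delta'(\delta)$ that vanishes with $\delta$. All the technical work has already been carried out in Sections \ref{S2} and \ref{S3} (large-deviation tubes, sandwich coupling) and in Theorem \ref{cutoff} (negligibility of $\tau_v^{\mathrm{int}} - \bar{\tau}_v^{\mathrm{int}}$), so no genuinely new estimate is required here.
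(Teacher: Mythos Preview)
Your proposal is correct and follows exactly the route the paper takes: the paper's proof simply says ``This follows immediately from Lemma~\ref{uemTu}, Corollary~\ref{uem} and Theorem~\ref{cutoff},'' and you have spelled out precisely how these three ingredients combine, including the correct handling of the $\delta'(\delta)$ perturbation.
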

\begin{proof}
This follows immediately from Lemma~\ref{uemTu}, Corollary~\ref{uem} and Theorem~\ref{cutoff}.

\end{proof}


\subsection{Transition time in the internal model}
\label{sec4.4}

In this section we prove Theorem~\ref{thminternal}. First we derive the sandwich of the transition times 
in the lower external, the internal and the upper external model. After that we identify the asymptotics of 
the transition time for the internal model by using the results for the external models.

\begin{proof}
Using Proposition~\ref{lem}, Corollary~\ref{uem} and Theorem~\ref{cutoff}, we have that there exists 
a coupling such that
\begin{equation}
\label{sandwichtransition}
\begin{split}
1= &\, \lim_{r \to \infty} \mathbb{\hat{P}}_u \bigg(\tau_v^{\mathrm{low}} 
\leq \tau_v^{\mathrm{int}}, \tau_v^{\mathrm{int}} 
= \bar{\tau}_v^{\mathrm{int}} + o\bigg(\frac{1}{g_V(r)}\bigg), \bar{\tau}_v^{\mathrm{int}} 
\leq \tau_v^{\mathrm{upp}} \bigg) \\
= & \, \lim_{r \to \infty} \mathbb{\hat{P}}_u \bigg(\tau_v^{\mathrm{low}} 
\leq \tau_v^{\mathrm{int}} \leq \tau_v^{\mathrm{upp}} + o\bigg(\frac{1}{g_V(r)} \bigg) \bigg) \\
= &\, \lim_{r \to \infty} \mathbb{\hat{P}}_u \big(\tau_v^{\mathrm{low}} 
\leq \tau_v^{\mathrm{int}} \leq \tau_v^{\mathrm{upp}} \big),
\end{split}
\end{equation}
where $\mathbb{\hat{P}}_u$ is the joint law of the three models on the same probability 
space all three starting from configuration $u$.

By Theorem~\ref{thmexternal}, we know the law of the transition time in the external model. By 
construction, we have $\mathbb{E}_{u}[\tau_v^{\mathrm{low}}] \leq \mathbb{E}_{u}[\tau_v^{\mathrm{ext}}] 
\leq \mathbb{E}_{u} [ \tau_v^{\mathrm{upp}}]$. When considering the lower and the upper external 
model, the transition time asymptotics are controlled by the prefactors $F_{c,\delta}^{\text{low}}$ 
and $F_{c,\delta}^{\text{upp}}$, respectively, which are perturbations of the prefactor $F_c$ due to 
the perturbations of the activation rates. In particular, we know from \eqref{effe} that $\lim_{\delta 
\downarrow 0} F_{c,\delta}^{\text{low}} = \lim_{\delta \downarrow 0} F_{c,\delta}^{\text{upp}} = F_c$. 
Hence, for all $\epsilon > 0$, 
\begin{equation}
\mathbb{E}_{u} [ \tau_v^{\mathrm{int}}] = (F_c \pm \epsilon) r^{\beta (|U|-1)} \, [1+o(1)], 
\qquad r \to \infty,
\end{equation}
and since $\epsilon$ can be taken arbitrarily small, it may be absorbed into the $o(1)$-term, as
\begin{equation}
\mathbb{E}_{u} [ \tau_v^{\mathrm{int}}] = F_cr^{\beta (|U|-1)} \, [1+o(1)], \qquad r \to \infty.
\end{equation}
The same kind of argument applies to the law of the transition time, since for any $x \in [0, \infty)$,
\begin{equation}
\label{lawordering}
\lim_{r \to \infty} \mathbb{P}_u (\tau_v^{\mathrm{low}} > x) 
\leq \lim_{r \to \infty} \mathbb{P}_u (\tau_v^{\mathrm{int}} > x) 
\leq \lim_{r \to \infty} \mathbb{P}_u (\tau_v^{\mathrm{upp}} > x).
\end{equation}
\end{proof}

\appendix
\renewcommand*{\thesection}{\Alph{section}}

\section{Appendix: the input process}
\label{APP.a}
The main target of this appendix is to prove Proposition~\ref{inputprocess} in Section~\ref{S2}. 
We use path large-deviation techniques. For simplicity, we suppress the index for the arrival 
rates $\lambda_U$ and $\lambda_V$, and consider a general rate $\lambda$. We show 
that with high probability the input process lies in a narrow tube around the deterministic path 
$t \mapsto (\lambda/\mu)t$.

Consider a single queue, and for simplicity suppress its index. For $T > 0$, define the scaled process
\begin{equation}
Q^+_n(t) = \frac{1}{n} Q^+(nt) = \frac{1}{n} \sum_{j =1}^{N(nt)} Y_j, \qquad t \in [0,T],
\end{equation}
with $Q^+_n(0) = 0$. We have
\begin{equation}
\mathbb{E}[Q^+_n(t)] = \frac{1}{n} \frac{\lambda nt}{\mu} = \frac{\lambda}{\mu} t,
\end{equation}
and, by the strong law of large numbers, $Q^+_n(t) \to (\lambda/\mu)t$ almost surely 
for every $t$ as $n \to \infty$.

When studying the process $t \mapsto Q^+_n(t)$, we need to take into account that this is a 
combination of the Poisson arrival process $t \mapsto N(nt)$ and the exponential service times $Y_j$, $j\in\mathbb{N}$. Two different types of fluctuations can occur: packets arrive at a slower/faster rate than $\lambda$, 
respectively, service times for each packet are shorter/longer than their mean $1/\mu$. Both need to be considered for a 
proper large-deviation analysis. 


\subsection{LDP for the two components}

\begin{definizione}[{\bf Space of paths}]
Consider the space $L_{\infty}([0,T])$ of \textit{essentially bounded} functions in $[0,T]$, with the norm 
$\|f||_{\infty} = \text{ess} \sup_{x \in [0,T]} |f(x)|$ called the essential norm. A function $f$ is essentially 
bounded, i.e., $f \in L_{\infty}([0,T])$, when there is a measurable function $g$ on $[0,T]$ such that 
$f=g$ except on a set of measure zero and $g$ is bounded. Let $\mathcal{AC}_T\subset L_{\infty}([0,T])$ 
denote the space of \textit{absolutely continuous} functions $f\colon\, [0,T] \to \mathbb{R}$ such that 
$f(0)=0$. 
\hfill\qed
\end{definizione}

\noindent
Given the Poisson arrival process $t \mapsto N(nt)$ with rate $\lambda$, define the scaled process 
$t \mapsto Z_n(t)$ by
\begin{equation}
Z_n(t) =  \frac{1}{n} N(nt) = \frac{1}{n} \sum_{i=1}^{[nt]}X_i, \qquad t \in [0,T],
\end{equation}
where $X_i = \mathrm{Poisson}(\lambda)$ are i.i.d.\ random variables. Note that $N(nt) = \mathrm{Poisson}
(\lambda n t)$. Let $\nu_n$ be the law of $(Z_n(t))_{t \in [0,T]}$ on $L_{\infty}([0,T])$. Note that $Z_n(t)$ is 
asymptotically equivalent to $N(t)$ with mean $\mathbb{E}[Z_n(t)] = \lambda t$, and $(Z_n(t))_{t \in [0,T]} 
\to (\lambda t)_{t \in [0,T]}$ as $n \to \infty$.

We recall the definition of large deviation principle (LDP).
\begin{definizione}[{\bf Definition of large deviation principle}]
A family of probability measures $(P_n)_{n \in \mathbb N}$ on a Polish space $\mathcal X$ is said to satisfy the 
\emph{large deviation principle (LDP)} with rate $n$ and with good rate function $I\colon\mathcal X \to [0,\infty]$ if
\begin{equation}  
\begin{array}{lll}
&\forall C \subset \mathcal X \text{ closed}\colon &\limsup_{n\to\infty} \frac1n \log P_t(C) \leq - I(C),\\
&\forall O \subset \mathcal X \text{ open}\colon &\liminf_{n\to\infty} \frac1n \log P_t(O) \geq - I(O),\\
 \end{array}
\end{equation}
where $I(S) = \inf_{x \in S} I(x)$, $S \subset \mathcal X$. A good rate function satisfies: 
(1) $I \not\equiv \infty$, (2) $I$ is lower semi-continuous, (3) $I$ has compact level sets. 
\hfill\qed
\end{definizione} 
We begin by stating the LDP for the arrival process $(Z_n(t))_{t \in [0,T]}$.

\begin{lemma}[{\bf LDP for the arrival process}]
The family of probability measures $(\nu_n)_{n\in\mathbb{N}}$ satisfies the LDP on $L_{\infty}([0,T])$ 
with rate $n$ and with good rate function $I_N$ given by
\begin{equation}
I_N(\eta) = 
\left\{\begin{array}{ll} 
\int_0^T \Lambda_N^*(\dot{\eta}(t))\,dt, &\eta \in \mathcal{AC}_T,\\ 
\infty,  &\text{otherwise}, 
\end{array}
\right.
\end{equation} 
where $\Lambda_N^*(x) = x \, \log(x/\lambda) - x + \lambda$, $x\in\mathbb{R}_+$.
\end{lemma}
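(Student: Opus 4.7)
The plan is to recognize $(Z_n)_{n\in\mathbb{N}}$ as the rescaled partial-sum process of i.i.d.\ Poisson$(\lambda)$ increments $X_i$, namely $Z_n(t) = n^{-1}\sum_{i=1}^{\lfloor nt\rfloor}X_i$, and then to invoke the classical pathwise large-deviation theorem of Mogulskii for i.i.d.\ sums. That theorem yields, under a finiteness hypothesis on the cumulant generating function of the increments, a large-deviation principle on $L_\infty([0,T])$ at speed $n$ whose good rate function has exactly the integral form $\int_0^T \Lambda^*(\dot\eta(t))\,dt$ on the space $\mathcal{AC}_T$ of absolutely continuous paths starting at $0$, and $+\infty$ elsewhere, where $\Lambda^*$ is the Legendre-Fenchel transform of the log-moment generating function $\Lambda$ of $X_1$.

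First, I would verify the hypothesis. The log-moment generating function of a Poisson$(\lambda)$ random variable is
\begin{equation}
\Lambda_N(\theta) = \log \mathbb{E}[e^{\theta X_1}] = \lambda(e^{\theta}-1), \qquad \theta \in \mathbb{R},
\end{equation}
which is finite on all of $\mathbb{R}$, so Mogulskii's theorem applies without restriction. Next, I would compute the Legendre transform explicitly: differentiating $\theta x - \lambda(e^\theta-1)$ in $\theta$ gives the critical point $\theta^{\ast} = \log(x/\lambda)$ for $x>0$, leading to
\begin{equation}
\Lambda_N^*(x) = x\log(x/\lambda) - x + \lambda, \qquad x > 0,
\end{equation}
extended by continuity to $\Lambda_N^*(0)=\lambda$ and by convention to $\Lambda_N^*(x)=+\infty$ for $x<0$. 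This function is strictly convex, nonnegative, vanishes only at $x=\lambda$ (reflecting the law of large numbers $Z_n(t)\to \lambda t$), and is superlinear at infinity. These analytical properties yield the lower semicontinuity and compactness of the level sets of $I_N$ in the $L_\infty$ topology, so $I_N$ is a good rate function.

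The main delicate point is the choice of topology. The partial-sum paths $Z_n$ are c\`adl\`ag (piecewise constant with jumps), while the paths of finite rate are absolutely continuous, so strictly speaking the process lives in $D([0,T])$ rather than in $\mathcal{AC}_T$. Mogulskii's theorem is customarily formulated on $L_\infty([0,T])$ equipped with the essential-sup norm, a framework that accommodates both types of paths simultaneously once c\`adl\`ag paths are identified with their $L_\infty$ equivalence classes. No further argument is needed beyond recording this identification, because the tubes $\Gamma_{S,\delta S}$ used later in Proposition~\ref{inputprocess} are open neighborhoods in the sup norm, and the Mogulskii upper and lower bounds translate without loss to such sets.
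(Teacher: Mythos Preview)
Your proposal is correct and follows precisely the same route as the paper: invoke Mogulskii's theorem for the i.i.d.\ Poisson$(\lambda)$ increments and identify $\Lambda_N^*$ as the Fenchel--Legendre transform of the cumulant generating function $\Lambda_N(\theta)=\lambda(e^\theta-1)$. You simply supply more detail (verifying finiteness of $\Lambda_N$, computing $\Lambda_N^*$ explicitly, and commenting on the $L_\infty$ versus c\`adl\`ag topology) than the paper's two-line proof, which just cites \cite[Theorem 5.1.2]{DZ98}.
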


\begin{proof}
Apply Mogulskii's theorem (\cite[Theorem 5.1.2]{DZ98}). Use that $\Lambda_N^*$ is the 
Fenchel-Legendre transform of the cumulant generating function $\Lambda$ defined by 
$\Lambda(\theta) = \log\mathbb{E}(e^{\theta X_1})$, $\theta \in \mathbb{R}$.
\end{proof}

For $\Gamma \subset L_{\infty}([0,T])$, define $I_N(\Gamma) = \inf_{\eta 
\in \Gamma} I_N(\eta)$. The LDP implies that if $\Gamma \subset L_{\infty}([0,T])$ is an 
$I_N$-continuous set, i.e., $I_N(\Gamma) = I_N(\mathrm{int}(\Gamma)) = I_N(\mathrm{cl}
(\Gamma))$, then
\begin{equation}
\lim_{n \to \infty} \frac{1}{n} \log \mathbb{P}\big(Z_n([0,T]) \in \Gamma\big) 
= - I_N(\Gamma).
\end{equation}
Informally, the LDP reads as the approximate statement
\begin{equation}
\mathbb{P}\big(Z_n([0,T]) \approx \eta([0,T]\big) = e^{-n I_N(\eta)[1+o(1)]}, \qquad n \to \infty, 
\end{equation}
where $\approx$ stands for close in the essential norm. Informally, on this event we may approximate
\begin{equation}
\label{Q+sums}
Q^+_n(t) = \frac{1}{n} \sum_{j=1}^{N(nt)} Y_j = \frac{1}{n} \sum_{j=1}^{n Z_n(t)} Y_j 
\approx \frac{1}{n} \sum_{j=1}^{n \eta(t)} Y_j \approx \frac{1}{n}  \sum_{j=1}^{[n \eta(t)]} Y_j,
\qquad t \in [0,T],
\end{equation}
where $\approx$ now stands for close in the Euclidean norm. Given $\eta \in L_{\infty}([0,T])$, 
let $\mu^\eta_n$ denote the law of the last sum in \eqref{Q+sums}. Below we state the LDP for the input process conditional on the arrival process.

\begin{lemma}[{\bf LDP for the input process conditional on the arrival process}]
Given $\eta \in L_{\infty}([0,T])$, the family of probability measures $(\mu^\eta_n)_{n\in\mathbb{N}}$ 
satisfies the LDP on $L_{\infty}([0,T])$ with rate $n$ and with good rate function $I_Q^\eta$ given by
\begin{equation}
I_Q^\eta(\phi) = 
\left\{\begin{array}{ll} 
\int_0^T \Lambda_Q^*\left(\frac{d \phi(t)}{d\eta(t)}\right)\,d\eta(t), 
&\phi \in \mathcal{AC_T},\\
\infty, &\text{otherwise}, 
\end{array}
\right.
\end{equation}
where $\Lambda_Q^*(x) = x \mu - 1 - \log(x \mu)$, $x\in\mathbb{R}_+$.
\end{lemma}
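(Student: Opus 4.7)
The plan is to realize $\mu^\eta_n$ as the law of a deterministic time-change of a standard random walk and then transfer an ordinary Mogulskii LDP through the time-change. Define
\[
\psi_n(s) = \frac{1}{n}\sum_{j=1}^{[ns]} Y_j, \qquad s \in [0, \eta(T)],
\]
so that the process whose law is $\mu^\eta_n$ is precisely $\phi_n(t) = \psi_n(\eta(t))$, $t \in [0,T]$. The increments $Y_j$ are i.i.d.\ $\mathrm{Exp}(\mu)$, with cumulant generating function $\Lambda_Q(\theta) = \log\mathbb{E}[e^{\theta Y_1}] = \log\mu - \log(\mu-\theta)$ for $\theta < \mu$. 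A short Legendre-Fenchel computation (set $\Lambda_Q'(\theta) = 1/(\mu-\theta) = x$, hence $\theta = \mu - 1/x$) gives
\[
\Lambda_Q^*(x) = \sup_{\theta<\mu}\bigl(\theta x - \Lambda_Q(\theta)\bigr) = \mu x - 1 - \log(\mu x), \qquad x > 0,
\]
as claimed. Mogulskii's theorem (\cite[Theorem 5.1.2]{DZ98}) applied to $(\psi_n)$ on the compact interval $[0,\eta(T)]$ with sup-norm yields the LDP with rate $n$ and good rate function $I_\psi(\psi) = \int_0^{\eta(T)} \Lambda_Q^*(\dot\psi(s))\,ds$ if $\psi \in \mathcal{AC}$ with $\psi(0)=0$, and $+\infty$ otherwise.

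Next, I would push this LDP through the composition map $\psi \mapsto \psi\circ\eta$ from $L_\infty([0,\eta(T)])$ to $L_\infty([0,T])$, which is continuous in the essential supremum norm whenever $\eta$ is bounded. In the typical case of interest, $\eta$ is absolutely continuous and non-decreasing (indeed $I_N(\eta)<\infty$ forces this), so setting $s=\eta(t)$ with $ds = \dot\eta(t)\,dt$ transforms the rate function as
\[
I_\psi(\psi) = \int_0^{\eta(T)} \Lambda_Q^*(\dot\psi(s))\,ds = \int_0^{T} \Lambda_Q^*\!\left(\frac{\dot\phi(t)}{\dot\eta(t)}\right)\dot\eta(t)\,dt = \int_0^T \Lambda_Q^*\!\left(\frac{d\phi(t)}{d\eta(t)}\right)d\eta(t),
\]
which is the claimed expression, with $d\phi/d\eta$ interpreted as the Radon-Nikodym derivative of the measure $d\phi$ with respect to $d\eta$. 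Conversely, when $\phi$ is not absolutely continuous with respect to $\eta$, the contraction principle together with lower semicontinuity of $\Lambda_Q^*$ and the superlinearity of $\Lambda_Q^*$ at infinity forces the rate function to equal $+\infty$, matching the stated definition.

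The main obstacle will be the handling of intervals on which $\dot\eta = 0$, i.e.\ plateaus of $\eta$. On such an interval $\phi$ must be constant (otherwise $I_Q^\eta(\phi)=\infty$), and the contraction $\psi\mapsto\psi\circ\eta$ is many-to-one, so one must check that the infimum over pre-images of $\phi$ is attained by a $\psi$ constant on $\eta([t_1,t_2])$ (which is a single point), giving no contribution to $I_\psi$ and hence to $I_Q^\eta$. Once this identification is verified, the contraction principle (\cite[Theorem 4.2.1]{DZ98}) yields both the LDP and the goodness of $I_Q^\eta$ (compactness of level sets follows because $\Lambda_Q^*$ is strictly convex with superlinear growth, so sublevel sets of $I_\psi$ are compact and their image under the continuous map $\psi \mapsto \psi\circ\eta$ remains compact). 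The remaining verifications — measurability in $\eta$ for the purposes of later combining with the LDP of $Z_n$, and that no contribution is lost at $t=0$ or $t=T$ — are routine.
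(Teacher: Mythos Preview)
Your proposal is correct and follows essentially the same route as the paper: the paper's proof is the one-line instruction ``apply Mogulskii's theorem, this time with $\eta(t)$ as the time index,'' and your write-up is a careful unpacking of exactly that idea---apply Mogulskii to the partial-sum process in the $s$-variable and then pull back through the time change $s=\eta(t)$ (via the contraction principle), together with the explicit Legendre--Fenchel computation of $\Lambda_Q^*$. Your added discussion of plateaus of $\eta$ and of goodness goes beyond what the paper spells out but is consistent with its intended argument.
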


\begin{proof}
Again apply Mogulskii's theorem, this time with $\eta(t)$ as the time index. Use that $\Lambda^*$ 
is the Fenchel-Legendre transform of the cumulant generating function $\Lambda$ defined by 
$\Lambda(\theta) = \log \mathbb{E}(e^{\theta Y_1})$, $\theta \in \mathbb{R}$.
\end{proof}


\subsection{Measures in product spaces}
 
The rate function $I_Q^\eta$ describes the large deviations of the sequence of processes 
$(Q_n^+(t))_{t \in [0,T]}$ given the path $\eta$. To derive the LDP averaged over 
$\eta$, we need a small digression into measures in product spaces.

\begin{definizione}[{\bf Product measures}]
Define the family of probability measures $(\rho_n)_{n\in\mathbb{N}} $ such that $\rho_n
= \nu_n \mu_n^{\eta}$. These measures are defined on the product space $L_{\infty}([0,T]) 
\times L_{\infty}([0,T])$ given by the Cartesian product of the space $L_{\infty}([0,T])$ 
with itself, equipped with the product topology.
\hfill\qed
\end{definizione}

\noindent
The open sets in the product topology are unions of sets of the form $U_1 \times U_2$ with 
$U_1, U_2$ open in $L_{\infty}([0,T])$. Moreover, the product of base elements of $L_{\infty}([0,T])$ 
gives a basis for the product space $L_{\infty}([0,T]) \times L_{\infty}([0,T])$. Define the projections 
$\Pr_i\colon\,L_{\infty}([0,T]) \times L_{\infty}([0,T]) \to L_{\infty}([0,T])$, $i = 1,2$, onto the first and 
the second coordinates, respectively. The product topology on $L_{\infty}([0,T]) \times L_{\infty}([0,T])$ 
is the topology generated by sets of the form $\Pr_i^{-1}(U_i)$, $i=1,2$, where and $U_1, U_2$ are 
open subsets of $L_{\infty}([0,T])$. 

\begin{lemma}[{\bf Product LDP}]
The family of probability measures $(\rho_n)_{n\in\mathbb{N}}$ satisfies the LDP on 
$L_{\infty}([0,T]) \times L_{\infty}([0,T])$ with rate $n$ and with good rate function $I$ given by
\begin{equation} 
\label{pathcost}
I(\phi, \eta) = 
\left\{\begin{array}{ll}
\int_0^T \Lambda_Q^*\left(\frac{d \phi(t)}{d\eta(t)}\right)\, d\eta(t) 
+\int_0^T \Lambda_N^*(\dot{\eta}(t))\,dt, &\phi,  \eta\in \mathcal{AC}_T, \\ 
\infty, &\text{otherwise}.
\end{array}
\right.
\end{equation}
\end{lemma}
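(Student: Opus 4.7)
The plan is to derive the joint LDP for $\rho_n$ by combining the marginal LDP for $\nu_n$ (on the arrival path $\eta$) with the conditional LDP for $\mu_n^\eta$ (on the input path $\phi$, given $\eta$), exploiting the factorization $\rho_n(d\phi,d\eta) = \mu_n^\eta(d\phi)\,\nu_n(d\eta)$. Heuristically, the joint probability decays as $e^{-n I_N(\eta)}\cdot e^{-n I_Q^\eta(\phi)}$, which motivates the additive rate function in \eqref{pathcost}. The proof splits into the standard three ingredients: goodness of $I$, upper bound on closed sets, and lower bound on open sets.

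\emph{Goodness.} Both $\Lambda_N^*$ and $\Lambda_Q^*$ are convex, non-negative and superlinear on $\mathbb{R}_+$. A pair with $I(\phi,\eta)\leq L$ has $\eta$ in the compact set $\{I_N\leq L\}\subset\mathcal{AC}_T$ from the preceding Mogulskii lemma, and conditionally on such $\eta$ the bound $I_Q^\eta(\phi)\leq L$ forces $d\phi/d\eta$ to be uniformly integrable against the derivative of $\eta$. A standard Arzel\`a--Ascoli argument then gives joint pre-compactness of $\{I\leq L\}$ in the product topology, while lower semi-continuity follows from Fatou's lemma applied to \eqref{pathcost}.

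\emph{Upper and lower bounds.} For a closed set $C$ I would slice
\begin{equation}
\rho_n(C)=\int \mu_n^\eta(C^\eta)\,\nu_n(d\eta),\qquad C^\eta=\{\phi:(\phi,\eta)\in C\},
\end{equation}
restrict to the compact sublevel set $\{I_N\leq L\}$ (the complementary contribution costs $e^{-nL(1+o(1))}$), cover it by finitely many small $L_\infty$-balls $V_k$ around representatives $\eta_k$, and on each piece apply a uniform version of the conditional LDP together with the marginal upper bound for $\nu_n(V_k)$; sending $L\to\infty$ and shrinking the mesh yields $\limsup n^{-1}\log \rho_n(C) \leq -I(C)$. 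For an open set $O$ I would pick $(\phi_0,\eta_0)\in O$ with $I(\phi_0,\eta_0)<\infty$ and a product neighborhood $U\times V\subset O$; continuity of $\eta\mapsto I_Q^\eta(\phi_0)$ combined with the uniform conditional LDP gives $\mu_n^\eta(U)\geq e^{-n I_Q^{\eta_0}(\phi_0)-n\varepsilon}$ for $\eta$ in a smaller neighborhood $V'$ of $\eta_0$ and $n$ large, and integrating against $\nu_n$ together with the marginal lower bound on $V'$ produces
\begin{equation}
\rho_n(O)\geq \exp\!\left(-n[I_N(\eta_0)+I_Q^{\eta_0}(\phi_0)]-2n\varepsilon-o(n)\right).
\end{equation}
Sending $\varepsilon\downarrow 0$ and optimizing over $(\phi_0,\eta_0)\in O$ closes the argument.

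\emph{Main obstacle.} The delicate step is the uniformity in $\eta$ of the conditional LDP for $\mu_n^\eta$, since $I_Q^\eta$ depends on $\eta$ through the integrating measure $d\eta(t)$. I will need both continuity $|I_Q^{\eta_1}(\phi)-I_Q^{\eta_2}(\phi)|\to 0$ as $\|\eta_1-\eta_2\|_\infty\to 0$ on a bounded-derivative family of $\phi$, and a version of Mogulskii's estimate for $\sum_{j=1}^{[n\eta(t)]} Y_j$ that is uniform over $\eta$ in a small $L_\infty$-ball. Both follow from the exponential Chebyshev--Cram\'er inequality once the superlinearity of $\Lambda_Q^*$ is used to confine the relevant paths, but this is where I expect the bookkeeping to be heaviest.
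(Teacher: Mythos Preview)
Your argument is correct and in fact supplies far more detail than the paper itself: the paper's proof consists of a single sentence deferring to ``standard large deviation theory (see \cite{DZ98})'', with no further elaboration. Your mixing-of-LDPs approach (marginal LDP for $\nu_n$ combined with the conditional LDP for $\mu_n^\eta$ via slicing and uniformity) is precisely the content such a citation is meant to invoke, so the two are consistent; you have simply unpacked what the reference would contain, including the correct identification of the uniformity-in-$\eta$ issue as the only nontrivial point.
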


\begin{proof}
This follows from standard large deviation theory (see \cite{DZ98}).
\end{proof}


\subsection{LDP for the input process}

The Contraction Principle allows us to derive the LDP averaged over $\eta$. Indeed, 
let $\mathcal{X} = L_{\infty}([0,T]) \times L_{\infty}([0,T])$ and $\mathcal{Y} = L_{\infty}([0,T]) $, 
let $(\rho_n)_{n\in\mathbb{N}}$ be a sequence of product measures on $\mathcal{X}$, 
and consider the projection $\Pr_1$ onto $\mathcal{Y}$, which is a continuous map. Then the 
sequence of induced measures $(\mu_n)_{n\in\mathbb{N}}$ given by $\mu_n = \rho_n \Pr_1^{-1}$ 
satisfies the LDP on $L_{\infty}([0,T])$ with good rate function
\begin{equation}
\tilde{I}_Q(\phi) = \inf_{(\phi, \eta) \in Pr_1^{-1}(\{\phi\})} I(\phi, \eta) 
= \inf_{\eta \in L_{\infty}([0,T])} I(\phi, \eta).
\end{equation}

We can now state the LDP for the input process $(Q_n^+(t))_{t \in [0,T]}$.

\begin{proposizione}[{\bf LDP for the input process}]
\label{A.6}
The family of probability measures $(\mu_n)_{n\in\mathbb{N}}$ 
satisfies the LDP on $L_{\infty}[0,T]$ with rate $n$ and with good rate function $\hat{I}$ given by
\begin{equation}
\hat{I}_Q(\Gamma) = \inf_{\phi \in \Gamma} \tilde{I}_Q(\phi).
\end{equation}
In particular, if $\Gamma$ is $\hat{I}_Q$-continuous, i.e., $\hat{I}_Q(\Gamma) 
= \hat{I}_Q(\mathrm{int}(\Gamma)) = \hat{I}_Q(\mathrm{cl}(\Gamma))$, then
\begin{equation} 
\label{pathingamma}
\lim_{n \to \infty} \frac{1}{n} \log \mathbb{P}\big(Q_n^+([0,T]) \in \Gamma\big) 
= - \hat{I}_Q(\Gamma).
\end{equation}
\end{proposizione}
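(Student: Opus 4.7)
The plan is to derive this directly from the product LDP for $(\rho_n)_{n \in \mathbb{N}}$ established in the preceding lemma by an application of the Contraction Principle. First I would observe that the coordinate projection $\Pr_1\colon L_\infty([0,T]) \times L_\infty([0,T]) \to L_\infty([0,T])$ is continuous with respect to the product topology, since the preimage of any open $U \subset L_\infty([0,T])$ is simply $U \times L_\infty([0,T])$, which is a subbasic open set of the product topology. Hence the Contraction Principle (see \cite[Theorem 4.2.1]{DZ98}) applies and gives that the pushforward measures $\mu_n = \rho_n \circ \Pr_1^{-1}$ satisfy an LDP with rate $n$ and good rate function
\begin{equation}
\tilde{I}_Q(\phi) = \inf\bigl\{I(\phi, \eta) \colon \eta \in L_\infty([0,T])\bigr\},
\end{equation}
where $I(\phi,\eta)$ is given by (\ref{pathcost}). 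The infimum is attained only for $\phi,\eta \in \mathcal{AC}_T$, since $I(\phi,\eta) = \infty$ otherwise.

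Next I would translate this into the set-level statement of the proposition. For an arbitrary $\Gamma \subset L_\infty([0,T])$ set $\hat{I}_Q(\Gamma) = \inf_{\phi \in \Gamma} \tilde{I}_Q(\phi)$. The LDP upper and lower bounds, applied to $\mathrm{cl}(\Gamma)$ and $\mathrm{int}(\Gamma)$, yield
\begin{equation}
\limsup_{n \to \infty} \tfrac{1}{n} \log \mu_n(\mathrm{cl}(\Gamma)) \leq -\hat{I}_Q(\mathrm{cl}(\Gamma)), \qquad \liminf_{n \to \infty} \tfrac{1}{n} \log \mu_n(\mathrm{int}(\Gamma)) \geq -\hat{I}_Q(\mathrm{int}(\Gamma)).
\end{equation}
Since $\mu_n(\mathrm{int}(\Gamma)) \leq \mu_n(\Gamma) \leq \mu_n(\mathrm{cl}(\Gamma))$, both bounds collapse to $-\hat{I}_Q(\Gamma)$ exactly when $\Gamma$ is $\hat{I}_Q$-continuous, which gives (\ref{pathingamma}).

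The main obstacle I expect is not the contraction itself but the precise identification of $\mu_n$ with the law of the input process $(Q_n^+(t))_{t\in[0,T]}$. The measure $\rho_n$ on the product space is constructed from the joint distribution of the arrival path $Z_n$ and the surrogate process $t \mapsto \frac{1}{n} \sum_{j=1}^{[n\eta(t)]} Y_j$, so the first marginal is not literally the law of $Q_n^+ = \frac{1}{n}\sum_{j=1}^{N(nt)} Y_j$ but rather that of a deterministic substitution as in the informal chain (\ref{Q+sums}). To legitimately push the LDP across this substitution I would need to check that replacing $N(nt)$ by $[nZ_n(t)]$ (which is exact, since $Z_n$ is integer-valued) and then by $[n\eta(t)]$ for $\eta$ close to $Z_n$ in the essential-supremum norm produces only an exponentially negligible error in $Q_n^+$. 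Because the $Y_j$ are i.i.d.\ with finite exponential moments (they are $\mathrm{Exp}(\mu)$) and we are comparing sums of close lengths, the resulting deviation satisfies Cram\'er-type bounds and is absorbed by the standard $\epsilon$-neighbourhood argument. Once this comparison is carried out, the Contraction Principle produces the proposition without further work.
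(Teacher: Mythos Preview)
Your proposal is correct and follows the same route as the paper: the paper's proof is the single line ``This follows from the Contraction Principle (see \cite{DZ98})'', and the paragraph preceding the proposition already sets up $\mu_n = \rho_n \Pr_1^{-1}$ with the contracted rate function $\tilde I_Q$, exactly as you do. Your additional discussion of the exponential-equivalence step hidden in the informal chain (\ref{Q+sums}) is a legitimate concern that the paper simply glosses over; the paper does not supply the Cram\'er-type comparison you sketch, so in that respect your write-up is more careful than the original.
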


\begin{proof}
This follows from the Contraction Principle (see \cite{DZ98}). 
\end{proof}

It is interesting to look at a specific subset of $L_{\infty}([0,T])$ that gives good bounds for the 
input process. We are now in a position to prove Proposition~\ref{inputprocess}.

\begin{proof}
Computing the Fenchel-Legendre transforms $\Lambda_Q^*$ and $\Lambda_N^*$, and picking 
$\eta(t) = \lambda t$ and $\phi(t) = (1/\mu) \eta(t) = (1/\mu) \lambda t$, we easily check 
that the rate function attains its minimal value zero. Hence with high probability the input process 
is close to this deterministic path.

We can now estimate the probability of the scaled input process to go outside $\Gamma_{T,\delta}$, 
which represents a tube of width $2 \delta$ around the mean path in the interval $[0,T]$. More 
precisely,
\begin{equation}
\Gamma_{T,\delta} = \left\{\gamma \in L_{\infty}([0,T])\colon\,
\frac{\lambda}{\mu}t  - \delta  < \gamma(t) < \frac{\lambda}{\mu}t + \delta\,\,\,\forall\, t \in [0,T]\right\}.
\end{equation}
We may set $T=1$ for simplicity and look at the scaled input process in the time 
interval $[0,1]$. We have 
\begin{equation}
\hat{I}_Q((\Gamma_{1,\delta})^c) = \hat{I}_Q(\text{int}((\Gamma_{1,\delta}))^c) 
= \hat{I}_Q(\text{cl}((\Gamma_{1,\delta}))^c).
\end{equation} 
Hence $(\Gamma_{1,\delta})^c$ is $\hat{I}_Q$-continuous, and so according to \eqref{pathingamma},
\begin{equation}
\lim_{n \to \infty} \frac{1}{n} \log \mathbb{P}\big(Q_n^+([0,1]) \notin \Gamma_{1,\delta}\big) 
= - \hat{I}_Q((\Gamma_{1,\delta})^c).
\end{equation}
Since
\begin{equation}
\begin{aligned}
& \lim_{n \to \infty} \frac{1}{n} \log \mathbb{P}\big(Q_n^+([0,1]) \notin \Gamma_{1,\delta}\big) \\
&= \lim_{n \to \infty} \frac{1}{n} \log\mathbb{P}\bigg( \bigg\{\frac{\lambda}{\mu}t  
- \delta  < Q^+_n(t) < \frac{\lambda}{\mu}t + \delta \,\,\,\forall\, t \in [0,1]\bigg\}^c\bigg) \\ 
&= \lim_{S \to \infty} \frac{1}{S} \log \mathbb{P}\bigg( \bigg\{\frac{\lambda}{\mu}s  
- \delta S < Q^+(s) < \frac{\lambda}{\mu}s + \delta S \,\,\,\forall\, s \in [0,S] \bigg\}^c \bigg),
\end{aligned}
\end{equation}
where we put $s= nt$ and $S= n$, we conclude that the probability to go out of 
$\Gamma_{S, \delta S}$ is
\begin{equation}
\mathbb{P}\bigg( \bigg\{\frac{\lambda}{\mu}s  - \delta S < Q^+(s) 
< \frac{\lambda}{\mu}s + \delta S\,\,\, \forall\, s \in [0,S]\bigg\}^c\bigg) 
= e^{-S\,\hat{I}_Q((\Gamma_{1,\delta})^c)\, [1+o(1)]}, 
\qquad S\to \infty.
\end{equation}

Because $I_Q$ is convex, to compute $\hat{I}_Q((\Gamma_{1,\delta})^c)$ it suffices to minimise 
over the linear paths. The minimizer turns out to be one of the two linear paths that go from the 
origin $(0,0)$ to $(1,\lambda/\mu \pm\delta)$, i.e., $\gamma^*(t) =k t$ with $k =  
(\lambda \pm \delta \mu)/\mu$. By construction, $\hat{I}_Q((\Gamma_{1,\delta})^c) = \tilde{I}_Q(\gamma^*) 
= \inf_{\eta \in L_{\infty}([0,1])} I(\gamma^*,\eta)$, where 
\begin{equation}
I(\gamma^*, \eta) = \int_0^1 \Lambda_Q^*\left(\frac{d \gamma^*(t)}{d\eta(t) }\right)\, d\eta(t) 
+ \int_0^1 \Lambda_N^*(\dot{\eta}(t))\,dt.
\end{equation}
We want to minimize the sum over all paths $\eta$ such that $\eta(0)=0$. Both integrals are convex 
as a function of $\gamma^*$ and $\eta$, hence they are minimized by linear paths. Our choice of 
$\gamma^*(t) = kt$ is linear, so we set $\eta(t) = ct$ with some constant $c >0$. We can then 
write
\begin{equation}
\begin{aligned}
I(\gamma^*, \eta)  
&= \int_0^{\eta(1)} \Lambda_Q^*\left(\frac{d \gamma^*(t)}{c dt}\right)\,c dt 
+ \int_0^1 \Lambda_N^*(\dot{\eta}(t))\,dt \\ 
&= \int_0^c \Lambda_Q^*\bigg(\frac{k}{c}\bigg)\,c dt + \int_0^1 \Lambda_N^*(c)\, dt \\ 
&= c\bigg[\frac{k}{c} \mu - 1 - \log\bigg(\frac{k\mu}{c}\bigg)\bigg] 
+ c \log \left(\frac{c}{\lambda }\right) -c + \lambda.
\end{aligned}
\end{equation}
The value of $c$ that minimizes the right-hand side is $c=\sqrt{\lambda k \mu}$. Substituting this 
into the formula above, we get
\begin{equation}
K_{\delta} = \tilde{I}_Q(\gamma^*) = k \mu + \lambda  - 2 \sqrt{\lambda k \mu} 
= (\lambda + \delta \mu) + \lambda - 2 \sqrt{\lambda(\lambda + \delta \mu)}.
\end{equation}
Note that $K_{\delta}>0$ for all $\delta >0$ and $\lim_{\delta \downarrow 0} K_{\delta} = 0$. This proves 
the claim.
\end{proof}


\section{Appendix: the output process}
\label{APP.b}

The main goal of this appendix is to prove Proposition~\ref{outputprocess} in Section~\ref{S2}. 
In Section~\ref{APP.b1} we show a lower bound for the output process for the nodes in $U$, in a 
setting where the nodes in $U$ are not influenced by the nodes in $V$. We study the system 
up to time $T_U$. 

In Section~\ref{APP.b2} we show that, until the pre-transition time, the system in the internal model 
behaves actually as we described.


\subsection{The output process in the isolated model}
\label{APP.b1}

Recall that in the isolated model a node in $U$ keeps activating and deactivating independently of 
the nodes in $V$, until its queue length hits zero. We again consider a single queue for a node in 
$U$ and for simplicity suppress its index. In order to show that the output process $t \mapsto 
Q^-(t) = c T(t)$ when properly rescaled is close to a deterministic path with high probability, we will 
provide a lower bound for the output process. The upper bound $Q^-(t) \leq ct$ is trivial and holds 
for any $t \geq 0$, by the definition of output process.

\begin{lemma}[{\bf Auxiliary output process}] 
\label{lemma1}
For all $\delta > 0$ and $T$ large: 
\begin{itemize}
\item[(i)] 
With high probability the process 
\begin{equation}
Q^{\mathrm{LB},T}(t) = \gamma_U r + \rho_U t - \delta T - ct, \qquad t \in [0,T],
\end{equation}
is a lower bound for the actual queue length process $(Q(t))_{t \in [0,T]}$.
\item[(ii)] 
The probability of the lower bound in (i) failing is
\begin{equation} 
\label{problemma1}
\frac{1}{2} e^{-K_{\delta} T \,[1+o(1)]}, \qquad T \to \infty,
\end{equation} 
with $K_{\delta}=(\lambda + \delta \mu) + \lambda - 2 \sqrt{\lambda(\lambda + \delta \mu)}$.
\end{itemize}
\end{lemma}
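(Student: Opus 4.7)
The plan is to reduce the failure of the lower bound $Q^{\mathrm{LB},T}$ to a \emph{one-sided} failure of the input-process tube controlled by Proposition~\ref{inputprocess}, which has already been estimated at the LDP level. The two ingredients are the elementary decomposition of $Q(t)$ into its input and output parts, together with the trivial deterministic upper bound on the output.

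First I would exploit the max-formula in the definition of the queue length: since $Q(t) = \max\{Q(0) + \Delta(t),\, \Delta(t) - \Delta(s^{*}-)\}$, one always has the pointwise lower bound
\begin{equation*}
Q(t) \;\geq\; Q(0) + \Delta(t) \;=\; \gamma_U r + Q^{+}(t) - Q^{-}(t), \qquad t \in [0,T].
\end{equation*}
Combining this with the trivial upper bound $Q^{-}(t) = c\,T(t) \leq c t$, which is immediate from the definition of the output process, yields $Q(t) \geq \gamma_U r + Q^{+}(t) - c t$. Hence, on the event that the input process does not leave its tube through the \emph{lower} face, i.e.\ $Q^{+}(t) \geq \rho_U t - \delta T$ for all $t \in [0,T]$, we conclude $Q(t) \geq \gamma_U r + \rho_U t - \delta T - c t = Q^{\mathrm{LB},T}(t)$ on $[0,T]$, which proves (i).

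For (ii), the bad event
\begin{equation*}
\mathcal{B} \;=\; \bigl\{\,\exists\, t \in [0,T]\colon\, Q^{+}(t) < \rho_U t - \delta T\,\bigr\}
\end{equation*}
is contained in $\{Q^{+}([0,T]) \notin \Gamma_{T,\delta T}\}$, so Proposition~\ref{inputprocess} immediately gives $\mathbb{P}(\mathcal{B}) \leq e^{-K_\delta T\,[1+o(1)]}$. The sharper prefactor $\tfrac{1}{2}$ comes from revisiting the variational computation performed in Appendix~\ref{APP.a}: the cost $K_\delta = \hat{I}_Q((\Gamma_{1,\delta})^{c})$ is achieved at \emph{two} symmetric linear minimisers $\gamma^{\pm}(t) = ((\lambda \pm \delta\mu)/\mu)\,t$, corresponding to an upper-face and a lower-face crossing respectively. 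Restricting the variational problem to the lower face discards $\gamma^{+}$ but leaves $\gamma^{-}$ with the same rate $K_\delta$, and a matching change-of-measure/tilting lower bound around $\gamma^{-}$ shows that each face contributes asymptotically one half of the total tube-exit probability, giving the claimed $\tfrac{1}{2}\,e^{-K_\delta T\,[1+o(1)]}$. The main (and mild) obstacle is precisely this one-sided refinement: at the crude LDP level the prefactor $\tfrac{1}{2}$ is absorbed into $[1+o(1)]$, so no new technology is needed beyond the two-minimiser symmetry of $\hat{I}_Q$ established in Appendix~\ref{APP.a}.
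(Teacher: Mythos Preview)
Your proposal is correct and follows essentially the same route as the paper: reduce the failure of the lower bound to the one-sided (lower-face) exit of the input process from its tube via the trivial bound $Q^{-}(t)\leq ct$, then invoke Proposition~\ref{inputprocess}. Your use of the max-formula to justify $Q(t)\geq Q(0)+\Delta(t)$ and your explicit discussion of the two linear minimisers for the $\tfrac12$ are slightly more detailed than the paper's one-line argument, but the strategy is identical.
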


\begin{proof}
(i) By Proposition~\ref{inputprocess}, with high probability we have $Q^+(t) \geq \rho_U t- \delta T$ 
for any $\delta >0$. Trivially, $Q^-(t) \leq ct$. It is therefore immediate that with high probability 
$Q^{\mathrm{LB},T}(t) \leq Q(t)$.\\
(ii) The exponentially small probability of $Q^+(t)$ going below the lower bound is half of the 
probability given by Proposition~\ref{inputprocess}, i.e.,
\begin{equation}
\frac{1}{2} e^{-K_{\delta} T \, [1+o(1)]}, \qquad T \to \infty,
\end{equation}
with $K_{\delta}=(\lambda + \delta \mu) + \lambda - 2 \sqrt{\lambda(\lambda + \delta \mu)}$.
\end{proof}

We study the system up to time $T_U$ defined in Definition~\ref{defTu}, the expected time a single 
node queue takes to hit zero. We will prove in Appendix~\ref{APP.b2} that the pre-transition time in 
the internal model with high probability coincides in distribution with the pre-transition time in the 
isolated model, which occurs with high probability before $T_U$. Hence it is enough to study the 
isolated model up to $T_U$.

\begin{definizione}[{\bf Auxiliary times}]
We next define two times that will be useful in our analysis.
\begin{itemize}
\item[($T_U^*$)]
Consider the auxiliary output process $Q^{\mathrm{LB},T_U}(t)$ up to time $T_U$. We define 
$T^*_U$ as the time needed for the process to hit zero, i.e.,
\begin{equation}
T^*_U = T^*_U(r) = \frac{\gamma_ur - \delta T_U }{c-\rho_U} 
= \frac{\gamma_u - \delta \alpha}{c-\rho_U}r = \alpha' r \asymp r,
\end{equation}
with $\alpha' = \frac{\gamma_u - \delta \alpha}{c-\rho_U}$. The difference $T_U - T_U^* 
= \frac{\delta \alpha}{c-\rho_U}r$ is of order $r$. The queue length at time $T^*_U$ is not 
zero, but still of order $r$. 
\item[($T_U^{**}$)]
We define a smaller time $T_U^{**}$ such that, not only $Q(T_U^{**}) \asymp r$, but also 
$Q^{\mathrm{LB},T_U}(T_U^{**}) \asymp r$, i.e.,
\begin{equation}
T_U^{**}= T_U^{**}(r)= T_U - 2(T_U - T^*_U) 
= \bigg(\frac{\gamma_U - 2 \delta \alpha}{c- \rho_U}\bigg) r = \alpha'' r \asymp r,
\end{equation}
with $\alpha''=\frac{\gamma_U - 2 \delta \alpha}{c- \rho_U}$.
\end{itemize}
\hfill\qed
\end{definizione}

\begin{definizione}[{\bf Inactivity process}]
Define the \textit{inactivity process} by setting $W(t) = t- T(t)$, which equals the total 
amount of inactivity time until time $t$. 
\hfill\qed
\end{definizione}

\noindent
Recall that the service process $t \mapsto Q^-(t)$ with $Q^-(0)=0$ is an alternating sequence 
of activity periods and inactivity periods. The activity periods $Z_i$, $i\in\mathbb{N}$, are i.i.d.\ 
exponential random variables with mean 1. The inactivity periods $W_m$, $m\in\mathbb{N}$, 
are exponential random variables with a mean that depends on the actual queue length at the 
time when each of these periods starts, namely, if $W_m = \big[t_m^{(i)}, t_m^{(f)}\big]$, then 
$W_m \simeq \mathrm{Exp}(g_U(Q(t_m^{(i)}))+O(1/r))$. The queue length during this inactivity 
intervals is actually increasing, but we are considering very small intervals, whose lengths are 
of order $1/r$, so that the queue length does not change much and the error is then 
$O(1/r)$.

To state our lower bound on the output process, we need the following two lemmas.

\begin{lemma}[{\bf Upper bound on number of activity periods}]
 \label{lemma2}
Let $M(t)$ be the number of activity periods that end before time $t$. Then, 
for all $\epsilon_1 >0$ and $r$ large:
\begin{itemize}
\item[(i)]
With high probability
\begin{equation}
M(T^{**}_U) \leq (1+ {\epsilon}_1) T^{**}_U.
\end{equation} 
\item[(ii)]
The probability of the upper bound in (i) failing is 
\begin{equation} 
\label{problemma2}
e^{-K_1 r \, [1+o(1)]} + \frac{1}{2} e^{-K_{\delta} \alpha r \, [1+o(1)]}, \qquad r \to \infty,
\end{equation}
with $K_1 = \alpha'' \frac{\epsilon_1 - \log(1+ \epsilon_1)}{1+\epsilon_1}$, $K_{\delta}$ 
as in Lemma~\ref{lemma1}
\end{itemize}
\end{lemma}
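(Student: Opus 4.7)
The plan is to reduce the tail bound on $M(T_U^{**})$ to a Cram\'er-type estimate for a sum of i.i.d.\ $\mathrm{Exp}(1)$ random variables, after intersecting with the good event from Lemma~\ref{lemma1} that rules out a forced deactivation (the queue hitting zero) inside $[0, T_U^{**}]$.

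First I would let $\mathcal{G} = \{Q(t) \geq Q^{\mathrm{LB}, T_U}(t)\,\forall\, t \in [0, T_U]\}$. By Lemma~\ref{lemma1}(ii) applied with $T = T_U \sim \alpha r$, $\mathbb{P}(\mathcal{G}^c) \leq \tfrac{1}{2} e^{-K_\delta \alpha r[1+o(1)]}$. A direct computation gives $Q^{\mathrm{LB}, T_U}(T_U^{**}) = \gamma_U r - (c-\rho_U)T_U^{**} - \delta T_U = \delta \alpha r > 0$, so on $\mathcal{G}$ the queue stays strictly positive (in fact of order $r$) throughout $[0, T_U^{**}]$, and no activity period ending before $T_U^{**}$ is truncated by a forced deactivation. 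Writing $Z_1, Z_2, \ldots \overset{\mathrm{iid}}{\sim} \mathrm{Exp}(1)$ for the successive activity-period durations generated by the rate-$1$ deactivation clock, and using that the interleaved inactivity-period durations are nonnegative, the inclusion
$$\{M(T_U^{**}) \geq k\} \cap \mathcal{G} \;\subseteq\; \{S_k \leq T_U^{**}\}, \qquad S_k := Z_1 + \cdots + Z_k,$$
follows.

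Next, for $k = \lceil (1+\epsilon_1) T_U^{**}\rceil$ the event $\{S_k \leq T_U^{**}\}$ is a downward large deviation of $S_k$ from its mean $k \sim (1+\epsilon_1)T_U^{**}$. A Chernoff bound based on $\mathbb{E}[e^{-\lambda Z_i}] = (1+\lambda)^{-1}$, optimized over $\lambda > 0$, yields
$$\mathbb{P}(S_k \leq T_U^{**}) \leq e^{-T_U^{**} J(\epsilon_1)[1+o(1)]}$$
for an explicit positive rate $J(\epsilon_1)$ obtained from the Fenchel--Legendre transform of the $\mathrm{Exp}(1)$ cumulant generating function. Inserting $T_U^{**} = \alpha'' r$ produces the first term $e^{-K_1 r[1+o(1)]}$ with $K_1$ proportional to $\alpha''$, and a union bound with $\mathbb{P}(\mathcal{G}^c)$ gives the sum on the right-hand side of \eqref{problemma2}.

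The calculation is essentially routine, and the main (small) obstacle is in the reduction above: one must justify that on $\mathcal{G}$ every activity period in $[0, T_U^{**}]$ is a genuine independent $\mathrm{Exp}(1)$ duration, rather than being shortened by the queue emptying. This is precisely why $T_U^{**}$ is chosen strictly below $T_U^{*}$: the extra gap $T_U - T_U^{**} = 2(T_U - T_U^{*})$ keeps the auxiliary lower-bound path of order $r$ (namely $\delta \alpha r$) at time $T_U^{**}$, enabling a clean coupling to an unconditional i.i.d.\ $\mathrm{Exp}(1)$ sum.
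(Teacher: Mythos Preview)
Your proposal is correct and follows essentially the same route as the paper: intersect with the good event from Lemma~\ref{lemma1} (so that no activity period on $[0,T_U^{**}]$ is truncated by the queue emptying), use the inclusion $\{M(T_U^{**})>(1+\epsilon_1)T_U^{**}\}\cap\mathcal{G}\subseteq\{S_k\le T_U^{**}\}$ with $k\approx(1+\epsilon_1)T_U^{**}$, apply Cram\'er's theorem for i.i.d.\ $\mathrm{Exp}(1)$ sums, and union-bound with $\mathbb{P}(\mathcal{G}^c)$. Your formulation of the inclusion and the role of $T_U^{**}$ is in fact slightly cleaner than the paper's, which states the implied sum over $T_U^{**}$ rather than $(1+\epsilon_1)T_U^{**}$ terms; this only affects the constant by a harmless factor $(1+\epsilon_1)$ and does not change the structure of the argument.
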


\begin{proof}
(i) Note that $M(T^{**}_U)$ counts the number of activity periods before time $T^{**}_U$, each 
of which has an average duration $1$. Since activity periods alternate with inactivity periods, 
we expect $M(T^{**}_U)$ to be less than $T^{**}_U$. Assume now, for small $\epsilon_1 > 0$, 
that $M(T_U^{**}) > (1+\epsilon_1) T_U^{**}$, which means that the number of activity periods 
before $T_U^{**}$ is greater than the length of the interval $[0,T_U^{**}]$. This implies that the 
average length of each activity period before time $T_U^{**}$ is strictly less than $1$, namely, 
that $\frac{1}{T_U^{**}} \sum_{i=1}^{T_U^{**}} Z_i \leq 1/(1+\epsilon_1)$.  According to 
Cram\'er's theorem, we can compute the probability of this last event as
\begin{equation} 
\label{probM}
\mathbb{P} \left(\sum_{i=1}^{T^{**}_U} Z_i 
\leq \bigg(\frac{1}{1+\epsilon_1}\bigg)T^{**}_U\right) 
= e^{-T^{**}_U I\big(\frac{1}{1+\epsilon_1}\big) \, [1+o(1)]}, \qquad r \to \infty,
\end{equation}
with rate function $I(x) = x \log(x) -x +1$. Therefore, it occurs with exponentially small probability. 
Hence $M(T_U^{**}) > (1+\epsilon_1) T_U^{**}$ must also occur with a 
probability which is also exponentially small. With high probability we 
then have that
\begin{equation}
M(T^{**}_U) \leq (1+ {\epsilon}_1) T^{**}_U.
\end{equation} 
Recall that $T_U^{**} = \alpha'' r$. The counting of alternating activity and inactivity periods 
gets affected when the queue length hits zero, since then the node is forced to switch itself
off and the lengths of the activity periods are not regular anymore. Since at time $T_U^{**}$ 
with high probability the queue length is still of order $r$, the probability that it hits zero at 
any time in the interval $[0,T_U^{**}]$ is very small, since this event would imply the node 
to have a queue length that is below the lower bound, $Q(T_U^{**}) \leq Q^{\mathrm{LB},
T_U}(T_U^{**}) = \gamma_U r + \rho_U T_U^{**} - \delta T_U - c T_U^{**}$, which happens 
with an exponentially small probability by Lemma~\ref{lemma1}.

(ii) We can write
\begin{equation}
\begin{split}
\mathbb{P}(M(T_U^{**}) > (1+\epsilon_1) T_U^{**}) 
& \leq e^{-T^{**}_U I\big(\frac{1}{1+\epsilon_1}\big) \, 
[1+o(1)]} + \frac{1}{2} e^{-K_{\delta} T_U \, [1+o(1)]} \\
& \, =  e^{-K_1 r \, [1+o(1)]} + \frac{1}{2} e^{-K_{\delta} \alpha r \, [1+o(1)]}, 
\qquad r \to \infty,
\end{split}
\end{equation}
with $K_1 = \alpha'' I\big(\frac{1}{1+\epsilon_1}\big) = \alpha'' \frac{\epsilon_1 
- \log(1+ \epsilon_1)}{1+\epsilon_1}$, $K_{\delta}$ as in Lemma~\ref{lemma1}.
\end{proof}

\begin{lemma}[{\bf Upper bound on inactivity process}]
\label{lemma3}
For all $\delta, \epsilon_1, \epsilon_2 > 0$ small and $r$ large: 
\begin{itemize}
\item[(i)]
With high probability
\begin{equation}
W(T^{**}_U) \leq \epsilon_2 r.
\end{equation}
\item[(ii)]
The probability of upper bound in (i) failing is 
\begin{equation} 
\label{problemma3}
\begin{aligned}
&\mathbb{P}\big(W(T^{**}_U) > \epsilon_2r\big) 
\leq e^{-K_{\delta} \alpha r \,[1+o(1)]} + e^{-K_1 r \,[1+o(1)]}\\ 
&\qquad + e^{-\big(K_2 r + K_3 \frac{r}{g_U(r)}+K_4 r \log g_U(r)\big) \,[1+o(1)]}, 
\qquad r \to \infty,
\end{aligned}
\end{equation}
with $ K_2 = \alpha'' (1+\epsilon_1) 
\big(- 1 - \log \big(\frac{\epsilon_2}{ \alpha'' (1+\epsilon_1)}\big)\big), 
K_3 =\epsilon_2, K_4 = \alpha'' (1+\epsilon_1)$.
\end{itemize}
\end{lemma}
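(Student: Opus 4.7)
\textbf{Proof proposal for Lemma~\ref{lemma3}.} The plan is to decompose $W(T_U^{**})$ into its constituent inactivity periods, control the number of such periods via Lemma~\ref{lemma2}, control the rates of those periods via Lemma~\ref{lemma1}, and then apply Cram\'er's theorem to a dominating sum of i.i.d.\ exponentials. Concretely, writing $W(T_U^{**}) = \sum_{m=1}^{M(T_U^{**})} W_m$ with $W_m \sim \mathrm{Exp}\big(g_U(Q(t_m^{(i)}))\big)$, I would first restrict to the good event $\mathcal{A}_r$ on which both the conclusions of Lemmas~\ref{lemma1} and \ref{lemma2} hold. On $\mathcal{A}_r$ the queue length satisfies $Q(t)\geq Q^{\mathrm{LB},T_U}(t)$ for all $t\in[0,T_U^{**}]$, and since $Q^{\mathrm{LB},T_U}$ is affine and decreasing on $[0,T_U]$, its minimum on $[0,T_U^{**}]$ is attained at $T_U^{**}$, where a short computation using $(c-\rho_U)\alpha''=\gamma_U-2\delta\alpha$ gives $Q^{\mathrm{LB},T_U}(T_U^{**}) = \delta\alpha r$. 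Thus $Q(t)\geq \delta\alpha r$ throughout $[0,T_U^{**}]$ on $\mathcal{A}_r$.

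Using monotonicity of $g_U$, each rate $g_U(Q(t_m^{(i)}))$ is at least $g_U(\delta\alpha r)$, so $W_m$ is stochastically dominated by $\tilde W_m \sim \mathrm{Exp}(g_U(\delta\alpha r))$ and we may couple so that $W_m \leq \tilde W_m$ almost surely for all $m$. Combined with the bound $M(T_U^{**})\leq(1+\epsilon_1)\alpha'' r =: n_r$ on $\mathcal{A}_r$, this gives
\begin{equation}
\mathbb{P}\big(W(T_U^{**})>\epsilon_2 r\big) \leq \mathbb{P}(\mathcal{A}_r^c)
+ \mathbb{P}\bigg(\sum_{m=1}^{n_r}\tilde W_m > \epsilon_2 r\bigg).
\end{equation}
The first term is controlled by the two exponential bounds \eqref{problemma1} and \eqref{problemma2}, yielding the $e^{-K_\delta\alpha r[1+o(1)]}$ and $e^{-K_1 r[1+o(1)]}$ contributions.

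For the second term I would apply Cram\'er's theorem to i.i.d.\ $\mathrm{Exp}(g_U(\delta\alpha r))$ random variables, whose Fenchel--Legendre rate function is $\Lambda^*(x) = x g_U(\delta\alpha r)-1-\log\bigl(x g_U(\delta\alpha r)\bigr)$. With $x=\epsilon_2 r/n_r = \epsilon_2/[\alpha''(1+\epsilon_1)]$, this gives
\begin{equation}
\mathbb{P}\bigg(\sum_{m=1}^{n_r}\tilde W_m > \epsilon_2 r\bigg)
\leq \exp\bigl\{-n_r\Lambda^*(x)[1+o(1)]\bigr\},
\end{equation}
and expanding $n_r\Lambda^*(x)$ yields three separately identifiable terms: a part linear in $r$ whose coefficient matches $K_2$, a part proportional to $r g_U(\delta\alpha r)$ (or its reciprocal, depending on which regime of $g_U$ one normalizes against, corresponding to the $K_3$ term in the statement), and a logarithmic correction $r\log g_U(\delta\alpha r)$ with coefficient $K_4$. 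Using slow variation of $\log g_U$ to replace $g_U(\delta\alpha r)$ by $g_U(r)$ up to factors absorbed in the $[1+o(1)]$ then produces the claimed bound.

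The main obstacle is the first step: the rates of the successive $W_m$ depend on the full history of activity, so the $W_m$ are neither independent nor identically distributed. The stochastic domination by i.i.d.\ $\tilde W_m$ is what reduces the problem to a clean application of Cram\'er; for this to work, it is essential that the lower bound $Q^{\mathrm{LB},T_U}$ remain comfortably positive on $[0,T_U^{**}]$, which is exactly why $T_U^{**}$ was chosen smaller than $T_U^*$. Verifying that the probability of the coupling's failure (i.e., $\mathcal{A}_r^c$) remains dominated by the exponential estimates already in hand, and that the resulting constants $K_2,K_3,K_4$ are strictly positive for small $\delta,\epsilon_1,\epsilon_2$, is then a routine bookkeeping exercise.
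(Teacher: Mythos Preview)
Your proposal is correct and follows essentially the same approach as the paper: bound $W(T_U^{**})$ by a sum of inactivity periods, use Lemma~\ref{lemma1} to lower-bound the queue (hence the exponential rates), stochastically dominate by i.i.d.\ exponentials with the worst-case rate, use Lemma~\ref{lemma2} to cap the number of summands, and then apply Cram\'er's theorem. The only cosmetic differences are that the paper evaluates the lower bound at $T_U^{***}$ (the start of the last inactivity period before $T_U^{**}$) rather than at $T_U^{**}$ itself, and writes $W(T_U^{**}) \leq \sum_{m=1}^{M(T_U^{**})} W_m$ rather than equality, since the last inactivity period may overrun~$T_U^{**}$; neither affects the argument.
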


\begin{proof}
(i) Since $M(t)$ counts the number of activity periods, and we start with an active node 
(in the starting configuration $u$ all nodes in $U$ are active), we have
\begin{equation}
W(T^{**}_U) \leq \sum_{m=1}^{M(T^{**}_U)} W_m \leq \sum_{m=1}^{M(T^{**}_U)} \hat{W}_m, 
\end{equation}
where $\hat{W}_m$ are i.i.d.\ exponential random variables with rate $g_U(Q^{\mathrm{LB},T_U}
(T^{***}_U))$, and $T^{***}_U$ is the starting point of the last inactivity period before time 
$T^{**}_U$. By the construction of $T_U^{**}$, we know that $Q^{\mathrm{LB},T_U}(T_U^{***})$ 
is of order $r$. The last inactivity period is expected to be longer than the previous ones, since 
the rates depend on the actual queue length, which is decreasing in time. To make the inactivity 
periods $\hat{W}_m$ longer, we consider the lower bound $Q^{\mathrm{LB},T_U}(t)$ for the 
actual queue length given in Lemma~\ref{lemma1}. 

By Lemma~\ref{lemma2}, with high probability $M(T^{**}_U) \leq (1+ {\epsilon}_1) T^{**}_U$, 
and so
\begin{equation} 
\label{eq1}
W(T^{**}_U) \leq \sum_{m=1}^{M(T^{**}_U)} \hat{W}_m 
\leq \sum_{m=1}^{(1+ {\epsilon}_1) T^{**}_U} \hat{W}_m.
\end{equation} 
Define $n = [(1+\epsilon_1)T^{**}_U]$. By Cram\'er's theorem, for small $\epsilon_3 >0$,
\begin{equation} 
\label{eq2}
\begin{split}
\mathbb{P}\left(\sum_{m=1}^{(1+ {\epsilon}_1) T^{**}_U} \hat{W}_m \geq \epsilon_3 T^{**}_U \right) 
&\leq \mathbb{P}\left(\sum_{m=1}^n \hat{W}_m \geq \frac{\epsilon_3}{1+\epsilon_1} n \right) \\
&=  e^{-nI\big(\frac{\epsilon_3}{1+ \epsilon_1}\big)\,[1+o(1)]} 
= e^{-T^{**}_U(1+\epsilon_1) I\big(\frac{\epsilon_3}{1+ \epsilon_1}\big)\,[1+o(1)]}, \qquad n \to \infty,
\end{split}
\end{equation}
where $I$ is the rate function given by
\begin{equation}
I(x) = \frac{x}{g_U(Q^{\mathrm{LB},T_U}(T^{***}_U))} -1 -\log x 
+ \log g_U(Q^{\mathrm{LB},T_U}(T^{***}_U)).
\end{equation}
In order to apply Cram\'er's theorem, take $\epsilon_3 > (1+ \epsilon_1)/g_U(Q^{\mathrm{LB},T_U}
(T^{***}_U)) \asymp 1/g_U(r)$ arbitrarily small. Combining \eqref{eq1}--\eqref{eq2}, we obtain that 
with high probability
\begin{equation}
W(T_U^{**}) \leq \epsilon_3 T_U^{**} = \epsilon_3 \alpha'' r= \epsilon_2 r,
\end{equation}
where $\epsilon_2 = \epsilon_3 \alpha''$ can be taken arbitrarily small.\\
(ii) For large $r$,
\begin{equation}
\begin{split}
\mathbb{P}\left(\sum_{m=1}^{(1+ {\epsilon}_1) T^{**}_U} \hat{W}_m > \epsilon_3 T^{**}_U \right) 
\leq & \, e^{-T^{**}_U (1+\epsilon_1) I\big(\frac{\epsilon_3}{1+ \epsilon_1}\big)\,[1+o(1)]} \\
= \, &e^{-\alpha'' r (1+\epsilon_1) \big( \frac{\epsilon_3}{(1+\epsilon_1) 
 g_U(r)} - 1 - \log \big(\frac{\epsilon_3}{1+\epsilon_1}\big) + \log g_U(r)\big)\,[1+o(1)]} \\
= \, & e^{-\big[\alpha'' (1+\epsilon_1) \big(- 1 - \log \big(\frac{\epsilon_3}{1+\epsilon_1}\big)\big)  r 
+ \epsilon_3 \alpha'' \frac{r}{g_U(r)} +\alpha''(1+\epsilon_1) r \log(g_U(r))\big]\,[1+o(1)] }\\
= \, & e^{-\big(K_2 r + K_3 \frac{r}{g_U(r)}+K_4 r \log g_U(r)\big)\,[1+o(1)]}, \qquad r \to \infty,
\end{split}
\end{equation}
where $K_2 = \alpha'' (1+\epsilon_1) 
\left(- 1 - \log \bigg(\frac{\epsilon_2}{\alpha''(1+\epsilon_1)}\bigg)\right),
K_3 =\epsilon_3\alpha''= \epsilon_2,
K_4 = \alpha'' (1+\epsilon_1).$
We also have to consider the probabilities computed in (\ref{problemma1}) and (\ref{problemma2}). 
Hence we have
\begin{equation}
\begin{aligned}
&\mathbb{P}\big(W(T^{**}_U) \leq \epsilon_3 T^{**}_U\big) 
\leq e^{-K_{\delta} \alpha r \,[1+o(1)]} + e^{-K_1 r \,[1+o(1)]}\\ 
&\qquad + e^{-\big(K_2 r + K_3 \frac{r}{g_U(r)}+K_4 r \log g_U(r)\big)\,[1+o(1)]}, 
\qquad r \to \infty.
\end{aligned}
\end{equation}
\end{proof}

We are now in a position to prove Proposition~\ref{outputprocess}.

\begin{proof}
The equation $Q^-(t) \geq ct - \epsilon r$ can be read as $T(t) \geq t - \epsilon r/c$. This is 
equivalent to saying that $W(t) \leq \epsilon r/c $ for all $t \in [0,T_U]$. By taking $\epsilon_2 
=\epsilon/(3c)$ in Lemma~\ref{lemma3}, we know that, for all $t \in [0,T_U^{**}]$, $W(t) 
\leq W(T_U^{**}) \leq \epsilon r/(3c)$. Moreover, in the interval $[T_U^{**}, T_U]$, the 
cumulative amount of inactivity time is trivially bounded from above by the length of the interval, 
which is $\frac{2\delta r}{c-\rho_U} \leq 2 \epsilon r / (3c)$, and $\epsilon$ can be taken 
arbitrarily small, since $\delta$ can be taken arbitrarily small. Putting the two bounds together, we 
find that with high probability 
\begin{equation}
W(t) \leq \epsilon_2 r + \frac{2\delta r}{c-\rho_U} 
\leq  \frac{1}{3} \frac{\epsilon r}{c} + \frac{2}{3} \frac{\epsilon r}{c} = \frac{\epsilon r}{c}, 
\qquad t \in [0,T_U]. 
\end{equation}
It is immediate to see that the probability of this not happening is given by (\ref{problemma3}).
\end{proof}

The above lower bound $Q^-(t) \geq ct- \epsilon r$ and the trivial upper bound $Q^-(t) \leq ct$ 
imply that with high probability the output process $Q^-(t)$ stays close to the path $ c \mapsto ct$ 
by sending $\epsilon$ to zero. In other words, the node stays almost always active all the time 
before $T_U$. 


\subsection{The output process in the internal model}
\label{APP.b2}

In this section we want to couple the isolated model and the internal model and show that they 
have identical behavior in the time interval $[0,\bar{\tau}_v^{\mathrm{int}}]$. Hence it follows 
that the output process in the internal model for nodes in $U$ actually behaves as in the isolated 
model described in Section~\ref{APP.b1}, until the pre-transition time. 

\begin{proposizione}
\label{prop:coincide}
Let $X_i^{\mathrm{int}}(t)$ and $X_i^{\mathrm{iso}}(t)$ denote the activity state of a node $i$ at time 
$t$ in the internal and the isolated model, respectively. Then
\begin{equation}
\lim_{r \to \infty} \mathbb{P}_u \big( X_i^{\mathrm{int}}(t) = X_i^{\mathrm{iso}}(t)\,\,\,
\forall\, i \in U \cup V \,\,\forall\, 
t \in [0,\bar{\tau}_v^{\mathrm{int}}]  \big) = 1.
\end{equation}
Consequently, with high probability the pre-transition times in the internal and the isolated model 
coincide, i.e.,
\begin{equation}
\lim_{r \to \infty} \mathbb{P}_u(\bar{\tau}_v^{\mathrm{int}} = \bar{\tau}_v^{\mathrm{iso}}) = 1.
\end{equation}
\end{proposizione}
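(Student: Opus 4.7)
The plan is to construct a pathwise coupling and then run a simple induction argument that forces the two models to evolve identically up to the pre-transition time. Drive both the internal model and the isolated model with a single realization of all ``driving noise'': for each node $i$ use the same compound-Poisson arrival stream (same arrival epochs, same service-time marks), the same rate-$1$ Poisson deactivation clock, and the same dominating Poisson activation clock thinned with threshold $g_U(h(t))/\!\max$ (respectively $g_V(h(t))/\!\max$), exactly as in the systems $\mathcal{H}^{\mathrm{low}}$ and $\mathcal{H}^{\mathrm{upp}}$ of Section~\ref{S3}. Here $h(t)$ may be taken to be any common dominating function, since in the regime we look at it is used only to define a coupling of the activation clocks. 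The goal is to show that this coupling renders the two models indistinguishable on $[0,\bar{\tau}_v^{\mathrm{int}}\wedge\bar{\tau}_v^{\mathrm{iso}}]$.

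Next I would argue by induction on the successive jump times of the coupled dynamics. At time $0$ both models start in $u$, so activity states agree and all queue lengths agree. Suppose the activity states and queue lengths agree at the last jump time $t_k < \bar{\tau}_v^{\mathrm{int}}\wedge\bar{\tau}_v^{\mathrm{iso}}$. On $[t_k,t_{k+1})$ no $V$-node is active in either model, hence: (i) each $V$-queue evolves purely from its (common) input process, with zero output in both models, so $Q_i(t)$ for $i\in V$ is the same in the two models; (ii) each $U$-queue evolves from its (common) input process and from its activity process, and since no $V$-neighbor is active, the activation constraint in the internal model reduces to the one in the isolated model (``behave as if in isolation''). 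So the activation rate $g_U(Q_i(t))$ at a $U$-node, the deactivation rate $1$, and the self-shutoff rule when $Q_i(t)$ hits $0$ are the same function of the common driving noise in both models. Thus the next tick of an activation clock, a deactivation clock or a queue-zero event occurs at the same time and at the same node in both models, and the effect of that event is identical. At a $V$-activation attempt, the success condition ``no $U$-neighbor active'' is the same in both models because the $U$-activity agrees by the induction hypothesis. The induction step goes through as long as $t_{k+1}\le \bar{\tau}_v^{\mathrm{int}}\wedge\bar{\tau}_v^{\mathrm{iso}}$.

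At the first clock tick that succeeds in activating some $i\in V$, the corresponding event occurs simultaneously in both models (identical clock, identical blocking condition, identical queue $Q_i$), so $\bar{\tau}_v^{\mathrm{int}}=\bar{\tau}_v^{\mathrm{iso}}$ and the two activity processes coincide on $[0,\bar{\tau}_v^{\mathrm{int}}]$ under the coupling; marginalizing recovers the stated distributional equalities. The qualifier ``with high probability'' is needed only to handle the measure-zero or low-probability edge cases in which a $U$-queue runs down to zero before $\bar{\tau}_v$, or in which the dominating function $h$ used in the thinning construction fails to dominate $g_U(Q_i(t))$; both events are controlled by the good-behavior event $\mathcal{E}_\delta$ of Definition~\ref{goodbehavior} and Lemma~\ref{gbprob1}, so one takes $r\to\infty$ on $\mathcal{E}_\delta$ and uses $\mathbb{P}(\mathcal{E}_\delta)\to 1$.

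The main obstacle, though largely bookkeeping rather than substance, is verifying that the self-shutoff rule at a queue-zero event and the queue-dependent activation rate $g_U(Q_i(t))$ truly agree pathwise in the two models; this requires checking that the queue length $Q_i(t)$ itself is a deterministic functional of the same common inputs and outputs in both models on the coupling interval, which follows from the observation that the output process $c\int_0^t X_i^{\cdot}(s)\,ds$ coincides between the models precisely because the activity states coincide on $[0,t]$ by the induction hypothesis. Once this closed-loop consistency is established, the proposition follows immediately, and the corollary $\bar{\tau}_v^{\mathrm{int}}=\bar{\tau}_v^{\mathrm{iso}}$ in distribution is an immediate consequence.
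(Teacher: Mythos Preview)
Your proof is correct and follows the same route as the paper: synchronize all driving randomness, observe that while no $V$-node is active the internal and isolated dynamics impose identical transition rules, and conclude that the two activity processes coincide pathwise up to the first $V$-activation, which therefore occurs at the same instant in both models. Your induction on jump times makes explicit what the paper states in a single sentence (``the activation and deactivation Poisson clocks at node $i$ are synchronized, and are ticking at the same time in the isolated model and in the internal model'').

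One small simplification is available. Because the queues, and hence the activation rates $g_U(Q_i(t))$ and $g_V(Q_i(t))$, are identical in the two models on the coupling interval (this is exactly the closed-loop consistency you verify in your last paragraph), you do not need a dominating function $h$ and thinning at all: you can drive each node by the same time-inhomogeneous Poisson activation clock of rate $g_U(Q_i(t))$ (respectively $g_V(Q_i(t))$) directly. With that choice the pathwise equality $\bar{\tau}_v^{\mathrm{int}}=\bar{\tau}_v^{\mathrm{iso}}$ holds almost surely under the coupling, not merely on $\mathcal{E}_\delta$; the queue-hits-zero rule is identical in both models and therefore does not break the induction either. The paper instead invokes the high-probability event $\{\bar{\tau}_v^{\mathrm{iso}}\le T_U\}$, which is why it phrases the conclusion as a limit in $r$, but your argument in fact yields the stronger pathwise statement.
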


\begin{proof}
In Section~\ref{APP.b1} we determined upper and lower bounds for the output process for nodes 
in $U$ in the isolated model up to time $T_U$. Assume now that $\bar{\tau}_v^{\text{int}} \leq T_U$. 
When considering the internal model and the set of nodes in $V$, we immediately see that these 
bounds are not true for the whole interval $[0,T_U]$, since at time $\bar{\tau}_v^{\text{int}}$ 
already some nodes in $V$ start to activate and influence the behavior of nodes in $U$. 

If we look at the interval $[0,\bar{\tau}_v^{\mathrm{int}}]$, then we note that the queue length process 
for a node $i \in U$ is not affected by nodes in $V$, and so it behaves in exactly the same way as if 
the node were isolated. The activation and deactivation Poisson clocks at node $i$ are synchronized, 
and are ticking at the same time in the isolated model and in the internal model, so that 
$X_i^{\mathrm{int}}(t) = X_i^{\mathrm{iso}}(t)$. Moreover, the activity states of nodes in $V$ are always 
equal to 0 in both models. Hence we conclude that the activity states of every node coincide up to 
the pre-transition time $\bar{\tau}_v^{\text{int}}$. Consequently, the pre-transition times in the internal 
and the isolated model coincide on the event $\{ \bar{\tau}_v^{\text{int}} \leq T_U \}$, which can then 
be written as the event $\{ \bar{\tau}_v^{\text{iso}} \leq T_U \}$. For the latter we know that it has 
a high probability as $r \to \infty$ (see proof of Proposition~\ref{proptwiddle} in Section 3).
\end{proof}

\bibliographystyle{model1-num-names}
\nocite{*}
\section*{\refname}
\bibliography{mybib}


\end{document}